\begin{document}
\setlength{\baselineskip}{16pt}

\parindent 0.5cm
\evensidemargin 0cm \oddsidemargin 0cm \topmargin 0cm \textheight 22cm \textwidth 16cm \footskip 2cm \headsep
0cm

\newtheorem{theorem}{Theorem}[section]
\newtheorem{lemma}{Lemma}[section]
\newtheorem{proposition}{Proposition}[section]
\newtheorem{definition}{Definition}[section]
\newtheorem{example}{Example}[section]
\newtheorem{corollary}{Corollary}[section]

\newtheorem{remark}{Remark}[section]

\numberwithin{equation}{section}

\def\p{\partial}
\def\I{\textit}
\def\R{\mathbb R}
\def\C{\mathbb C}
\def\u{\underline}
\def\l{\lambda}
\def\a{\alpha}
\def\O{\Omega}
\def\e{\epsilon}
\def\ls{\lambda^*}
\def\D{\displaystyle}
\def\wyx{ \frac{w(y,t)}{w(x,t)}}
\def\imp{\Rightarrow}
\def\tE{\tilde E}
\def\tX{\tilde X}
\def\tH{\tilde H}
\def\tu{\tilde u}
\def\d{\mathcal D}
\def\aa{\mathcal A}
\def\DH{\mathcal D(\tH)}
\def\bE{\bar E}
\def\bH{\bar H}
\def\M{\mathcal M}
\renewcommand{\labelenumi}{(\arabic{enumi})}

\def\disp{\displaystyle}
\def\undertex#1{$\underline{\hbox{#1}}$}
\def\card{\mathop{\hbox{card}}}
\def\sgn{\mathop{\hbox{sgn}}}
\def\exp{\mathop{\hbox{exp}}}
\def\OFP{(\Omega,{\cal F},\PP)}
\newcommand\JM{Mierczy\'nski}
\newcommand\RR{\ensuremath{\mathbb{R}}}
\newcommand\CC{\ensuremath{\mathbb{C}}}
\newcommand\QQ{\ensuremath{\mathbb{Q}}}
\newcommand\ZZ{\ensuremath{\mathbb{Z}}}
\newcommand\NN{\ensuremath{\mathbb{N}}}
\newcommand\PP{\ensuremath{\mathbb{P}}}
\newcommand\abs[1]{\ensuremath{\lvert#1\rvert}}

\newcommand\normf[1]{\ensuremath{\lVert#1\rVert_{f}}}
\newcommand\normfRb[1]{\ensuremath{\lVert#1\rVert_{f,R_b}}}
\newcommand\normfRbone[1]{\ensuremath{\lVert#1\rVert_{f, R_{b_1}}}}
\newcommand\normfRbtwo[1]{\ensuremath{\lVert#1\rVert_{f,R_{b_2}}}}
\newcommand\normtwo[1]{\ensuremath{\lVert#1\rVert_{2}}}
\newcommand\norminfty[1]{\ensuremath{\lVert#1\rVert_{\infty}}}
\newcommand{\ds}{\displaystyle}

\title{Asymptotic dynamics in a two-species chemotaxis model with non-local terms }
\author{
Tahir  Bachar Issa and Rachidi B.  Salako   \\
Department of Mathematics and Statistics\\
Auburn University\\
Auburn University, AL 36849\\
U.S.A. }

\date{}
\maketitle

\begin{abstract}
In this study, we  consider the following extended attraction two species chemotaxis system of  parabolic-parabolic-elliptic type with nonlocal terms
 \[
\begin{cases}
u_t=d_1\Delta u-\chi_1\nabla (u\cdot \nabla w)+u\left(a_0-a_1u-a_2v-a_3\int_{\Omega}u-a_4\int_{\Omega}v\right),\quad x\in \Omega \quad\cr
v_t=d_2\Delta v-\chi_2\nabla (v\cdot \nabla w)+v\left(b_0-b_1u-b_2v-b_3\int_{\Omega}u-b_4\int_{\Omega}v\right),\quad x\in \Omega \quad\cr
0=d_3\Delta w+k u+lv-\lambda w,\quad x\in \Omega \quad\cr

\end{cases}
\]
under homogeneous Neuman boundary conditions in a bounded domain $\Omega \subset \mathbb{R}^n(n\ge1)$ with smooth boundary, where  $a_0,b_0,\, \,a_1,k,l,\lambda,\chi_i,d_i$ and $ b_2$ are positive and $a_2,\, a_3, \, a_4, \, b_1,\, b_3,$ and $b_4$ are real numbers. We first prove the global existence of non-negative classical solutions for various explicit parameter regions. Next,  under some  further explicit conditions on the coefficients $a_i,\, b_i,d_i,l,k,\lambda$ and on the chemotaxis sensitivities  $\chi_i$, we show  that    the system has a unique positive constant steady state solution which is globally asymptotically stable. Finally, we also find some  explicit conditions on the coefficients $a_i,\, b_i,d_i,l,k,\lambda$ and on the chemotaxis sensitivities  $\chi_i$  for which the phenomenon of competitive exclusion occurs  in the sense that  as time goes to infinity, one of the species dies out and the other reaches its carrying capacity . The method of eventual comparison is used to study the asymptotic behavior.

\end{abstract}

  \medskip

  \noindent {\bf Key words.} Parabolic-parabolic-elliptic chemotaxis system,  classical solution, local existence, global existence, maximum principle,  logistic equation, asymptotic behavior, coexistence phenomena, exclusion phenomena.

  \medskip

  \noindent  {\bf 2010 Mathematics Subject Classification.}  35B35, 35B40, 35K57, 35Q92, 92C17.

\section{Introduction and Statement of the Main Results}
\label{S:intro}

 Bacteria chemotaxis, or simply chemotaxis is the directed movement of biological cells or micro organisms in response to chemical signals in their environment. Bateria chemotaxis  is crucial for many aspects of behaviour, including the location of food sources, avoidance of predators and attracting mates, slime moud aggregation, tumour angiogenesis, and primitive steak formation (see \cite{ISM04,  KJPJAS03} and the references therein).  Recent studies, \cite{DAL1991}, suggest  that chemotaxis plays also a crucial role in  macroscopic process such as population dynamics , gravitational collapse, etc.  Indeed, M. J. Kennedy and J. G. Lawless conclude in \cite{MJKJGL85} `` Thus, chemotaxis may be one mechanism by which denitrifiers successfully compete for available $N0_3^{-}$and $N0_2^{-}$, and which may facilitate the survival of naturally occurring populations of some denitrifiers. ''  and D. A. Lauffenburger in \cite{DAL1991} mentioned `` Current results indicate that cell motility and chemotaxis properties can be as important to population dynamics as cell growth kinetic properties, so that greater attention to this aspect of microbial behavior is warranted in future studies of microbial ecology. ''

In  the 1970s, Keller and Segel proposed a celebrated mathematical model to describe the aggregation process of Dictyostelium discoideum, a soil-living amoebea \cite{KS1970, KS71}. Following the pioneering works of Keller and Segel, chemotaxis models have attracted the attention of many researchers  in mathematical biology.  It is well known that chemotactic-cross diffusion has a very strong destabilizing action in space dimension $n\geq 2$ in the sense that finite-time blow-up of some classical solutions may occurs (see \cite{CoEsVi11, HVJ1997a, JaW92, W2011b} for one species chemotaixis model and \cite{ASV2009} two species chemotaxis models ). However, it is  also known that logistic sources of Lotka-Volterra type preclude such blow-up phenomenon (see \cite{TW07, ITBWS16, RBSWS17a} for one species and \cite{TW12,NT13} for two species) and that, at least numerically, chemotaxis may
exhibit quite a rich variety of colorful dynamical features, up to periodic and even chaotic solution behavior \cite{kuto_PHYSD, PaHi}.

In this work, we study the long-term behaviour of the following extended cooperative-competitive attraction two species chemotaxis system of parabolic-parabolic-elliptic type with   nonlocal terms
\begin{equation}
\label{u-v-w-eq1}
\begin{cases}
u_t=d_1\Delta u-\chi_1\nabla (u\cdot \nabla w)+u\left(a_0-a_1u-a_2v-a_3\int_{\Omega}u-a_4\int_{\Omega}v\right),\quad x\in \Omega \quad\cr
v_t=d_2\Delta v-\chi_2\nabla (v\cdot \nabla w)+v\left(b_0-b_1u-b_2v-b_3\int_{\Omega}u-b_4\int_{\Omega}v\right),\quad x\in \Omega \quad\cr
0=d_3\Delta w+k u+lv-\lambda w,\quad x\in \Omega \quad\cr
\frac{\p u}{\p n}=\frac{\p v}{\p n}=\frac{\p w}{\p n}=0, \quad x\in\p \Omega,
\end{cases}
\end{equation}
where $\Omega$ is a bounded subset of $\mathbb{R}^n$ with smooth
boundary,  $u(x,t)$ and $v(x,t)$ are the population densities of two
species attracted by  the same chemoattractant substance with density  $w(x,t)$;  $\chi_{i}>0, i=1,2,$ are the constant chemotactic sensitivities; $d_i>0,i=1,2,3$, are diffusion coeficients; $k,l$ and $\lambda$ are positive and represent respectively, the creation and degradation rate of the chemical substance; $a_0,b_0>0$, describe the intrinsic growth rate of the species $u$ and $v$ respectively; $a_1,b_1>0$, describe the self limitation effect of the species $u$ and $v$ respectively; $b_1 \in \mathbb{R}$ (resp. $a_2 \in \mathbb{R}$ ) describe the local effect of the species $u$ (resp. of the species $v$) on the species $v$ (resp. on the species $u$) and the nonlocal term $\int_{\Omega}u$ (resp. $\int_{\Omega}v$) describe the effect of the total mass of $u$ (resp. of $v$) on the growth of the two species; $a_3, \,a_4,,\, b_3,$  $b_4$ are real numbers (see \cite{NT13, ITBWS16} for more details on this model).

We now review briefly the existing works on various special cases of system  \eqref{u-v-w-eq1} and motivate our current study of the asymptotic dynamics  of \eqref{u-v-w-eq1}. If  $d_1=d_2=d_3=1$ and  $a_i, b_i >0$ ($i=1,2$), Negreanu and Tello \cite{NT13}  proved that  system \eqref{u-v-w-eq1} has a unique globally stable homogeneous steady state $(u^*,v^*,w^*)$ where
 $$
u^*=\frac{a_0(b_2+|\Omega|b_4)-b_0(a_2+|\Omega|a_4)}{(b_2+|\Omega|b_4)(a_1+|\Omega|a_3)-(a_2+|\Omega|a_4)(b_1+|\Omega|b_3)},$$
$$ v^*=\frac{a_0(b_1+|\Omega|b_3)-b_0(a_1+|\Omega|a_3)}{(b_1+|\Omega|b_3)(a_2+|\Omega|a_4)-(a_1+|\Omega|a_3)(b_2+|\Omega|b_4)},
$$
 and
$$ w^*=\frac{ku^*+lv^*}{\lambda},
$$
 under the assumption
\begin{equation}
\label{eq-int-000}
a_1> 2k(\chi_1+\chi_2)+ b_1+|b_3|+|a_3| \quad \text{and} \quad b_2> 2l(\chi_1+\chi_2)+ a_2+|a_4|+|b_4|.
\end{equation}

System \eqref{u-v-w-eq1} without nonlocal terms  ($a_3=a_4=b_3=b_4=0$)  and  with $a_0=a_1=\mu_1$, $a_2=\mu_1\tilde a_2$, $b_0=b_2=\mu_2$, $b_1=\mu_2\tilde b_1$,  becomes
\begin{equation}
\label{u-v-w-eq1bis}
\begin{cases}
u_t=d_1\Delta u-\chi_1\nabla (u\cdot \nabla w)+\mu_1u\left(1-u-\tilde a_2v\right),\quad x\in \Omega \quad\cr
v_t=d_2\Delta v-\chi_2\nabla (v\cdot \nabla w)+\mu_2v\left(1-\tilde b_1u-v\right),\quad x\in \Omega \quad\cr
0=d_3\Delta w+k u+lv-\lambda w,\quad x\in \Omega \quad\cr
\frac{\p u}{\p n}=\frac{\p v}{\p n}=\frac{\p w}{\p n}=0, \quad x\in\p \Omega.
\end{cases}
\end{equation}

If $d_3=1$ and  $\tilde a_2,\tilde b_2 \in [0,1)$, Tello and Winkler \cite{TW12} show that 
$(u^*,v^*,w^*)$ is a unique globally stable steady state  for \eqref{u-v-w-eq1bis} under the assumption
\begin{equation}
\label{eq-int-001}
\mu_1> 2(\chi_1+\chi_2)+\tilde a_2\mu_2 \quad \text{and} \quad  \mu_2> 2(\chi_1+\chi_2)+\tilde b_1\mu_1.
\end{equation}
Note that the assumption \eqref{eq-int-000}(resp. \eqref{eq-int-001}) is not natural in the sense when $\chi_1=\chi_2=0$, \eqref{eq-int-000} (resp. \eqref{eq-int-001}) does not hold trivially. Recently,  Black, Lankeit and Mizukam in  \cite{TBJLMM16}, used the powerful tool of eventual comparison method as called in  \cite{TBJLMM16}  and obtained when $\tilde a_2,\tilde b_2 \in [0,1)$, the global asymptotic stability of 
 $(u^*,v^*,w^*)$ for system \eqref{u-v-w-eq1bis} under  the  natural conditions
\begin{equation}
\label{eq-int-002}
q_1 \in [0, \frac{d_3}{k})\cap[0,\frac{\tilde a_2d_3}{l}), q_2 \in [0, \frac{d_3}{2l})\cap[0,\frac{\tilde  b_1d_3}{k})
\end{equation}
\begin{equation}
\label{eq-int-003}
\tilde  a_2 \tilde b_1d_3^2< \left(d_3-2kq_1\right)\left(d_3-2lq_2\right),
\end{equation}
where $q_1=\frac{\chi_1}{\mu_1}$ and $q_2=\frac{\chi_2}{\mu_2}.$

The goal of our current study can be summarized in two main points. First, we extend the results by  Black, Lankeit and Mizukam in \cite{TBJLMM16} to the case with nonlocal terms of system $\eqref{u-v-w-eq1}$ and show the efficiency of the method of eventual comparison even in the case of non local terms. Secondly, motivated by the results in \cite{STW13}, we prove  the phenomenon of competitive exclusion for system ~\eqref{u-v-w-eq1} under some natural conditions on the parameters. In \cite{STW13}, the authors proved by the eventual comparison method, the phenomenon of competitive exclusion for system ~\eqref{u-v-w-eq1bis} under the assumptions
$$k\geq 0,\, q_1\geq 0,\,q_2\geq 0,\,q_1\leq \tilde  a_2,\,q_2<\frac{1}{2}$$
and
\begin{equation}
\label{eq-int-004}
kq_1+\max\Big\{q_2, \frac{\tilde  a_2-\tilde a_2q_2}{1-2q_2},  \frac{kq_2-\tilde  a_2q_2}{1-2q_2}\Big\} <1.
\end{equation}

Throughout the paper, we use the following standard notations:
$$
(a)_-=\max\{0, -a\},\quad (a)_+=\max\{0, a\} \quad \forall a\in \R.
$$
Let
$$
C^0(\overline{\Omega})=\{u\ : \ \Omega\rightarrow \R , \ \text{bounded and uniformly continuous}\}
$$
and for every $u\in C^0(\overline{\Omega})$ we define
$$
\|u\|_{\infty}=\|u\|_{C^0(\overline{\Omega})}=\sup_{x\in\Omega}|u(x)|.
$$

  For convenience, we introduce the following assumptions.

\medskip
\noindent {\bf (H1)} {
\begin{equation}
\begin{cases}
\label{global-existence-cond-eq}
a_1>(b_1)_-+|\Omega|\left((a_3)_-+(b_3)_-\right)+k\frac{\chi_1+\chi_2}{d_3}\cr
 b_2> (a_2)_-+|\Omega|\left((a_4)_-+(b_4)_-\right)+l\frac{ \chi_1+ \chi_2}{d_3}.
\end{cases}
 \end{equation}
}

\medskip
\noindent {\bf (H2)} {
\begin{equation}
\begin{cases}
\label{L1-boundedness-cond-eq-localcompetition}
a_1>|\Omega|\left((a_3)_-+(b_3)_-\right)\cr
 b_2>|\Omega|\left((a_4)_-+(b_4)_-\right).
\end{cases}
 \end{equation}
}

\noindent {\bf (H3)} {
\begin{equation}
\begin{cases}
\label{L1-boundedness-cond-eq-generalcase}
a_1-\frac{1}{2}\left((a_2)_-+(b_1)_-\right)-|\Omega|(a_3)_--\frac{1}{2}|\Omega|\left((a_4)_-+(b_3)_-\right)>0\cr
b_2-\frac{1}{2}\left((a_2)_-+(b_1)_-\right)-|\Omega|(b_4)_--\frac{1}{2}|\Omega|\left((a_4)_-+(b_3)_-\right)>0.
\end{cases}
 \end{equation}
}

We start by our main results on global existence of classical solutions.
\begin{theorem}
\label{thm-global-001}
\begin{itemize}
\item[(1)] Assume that {\bf (H1) } holds.
Then for any   $u_0,v_0 \in C^0(\bar{\Omega})$ with $u_0\ge 0$ and with $v_0\ge 0$ ,
$\eqref{u-v-w-eq1}$ has a unique  bounded  global  classical solution $(u(x,t;u_0,v_0),v(x,t;u_0,v_0)$, $w(x,t;u_0,v_0))$  which satisfies that
$$\lim_{t\to 0}\|u(\cdot,t;u_0,v_0)-u_0(\cdot)\|_{C^0(\bar\Omega)}=0,\quad \lim_{t\to 0}\|v(\cdot,t;u_0,v_0)-v_0(\cdot)\|_{C^0(\bar\Omega)}=0.
$$
 Moreover,
\[0\leq u(x,t) \leq \max\left\{\|u_0\|_\infty,M_{01} \right\}, \]
and
\[0\leq v(x,t)\leq  \max\left\{\|v_0\|_\infty,M_{02} \right\}, \]
with
\[M_{01}=\frac{a_0+\sqrt{a_0^2+4\big(a_1-k\frac{\chi_1}{d_3}-|\Omega|(a_3)_-\big)\big((a_2)_-+|\Omega|(a_4)_-+l\frac{\chi_1}{d_3}\big)M_{00}}}{2\big(a_1-k\frac{\chi_1}{d_3}-|\Omega|(a_3)_-\big)} ,\]
\[M_{02}=\frac{b_0+\sqrt{b_0^2+4\big(b_2-l\frac{\chi_2}{d_3}-|\Omega|(b_4)_-\big)\big((b_1)_-+|\Omega|(b_3)_-+k\frac{\chi_2}{d_3}\big)M_{00}}}{2\big(b_2-l\frac{\chi_2}{d_3}-|\Omega|(b_4)_-\big)} ,\]
\[M_{00}=\max\left\{\|u_0\|_\infty\|v_0\|_\infty\ ,\ \frac{(a_0+b_0)^2}{4L^2}  \right\},\]
where
\begin{align*}
L=\min\Big\{&a_1-k\frac{\chi_1+\chi_2}{d_3}-|\Omega|((a_3)_-+|(b_3)_-)-(b_1)_-,\\
&b_2-l\frac{\chi_2+\chi_1}{d_3}-|\Omega|((b_4)_-+(a_4)_-)-(a_2)_-\Big\}.
\end{align*}


\vspace{-0.1in}\item[(2)] Assume that {\bf (H2)} and
$$
\begin{cases}
a_1>\max\{0,\frac{\chi_1k(n-2)}{d_3n}\}\ ,\qquad a_2> \max\{0,\frac{\chi_1l(n-2)}{d_3n}\}, \\
b_2>\max\{0,\frac{\chi_2l(n-2)}{d_3n}\}\ ,\qquad b_1>\max\{0, \frac{\chi_2k(n-2)}{d_3n}\}
\end{cases} \qquad {\bf (H4)}
$$
hold.
 Then for any nonnegative initials $u_0,v_0 \in C^0(\bar{\Omega})$,  system  $\eqref{u-v-w-eq1}$ has a unique  bounded  global
  classical solution $(u(x,t;u_0,v_0)$, $v(x,t;u_0,v_0)$, $w(x,t;u_0,v_0))$ which satisfies that
$$\lim_{t\to 0}\|u(\cdot,t;u_0,v_0)-u_0(\cdot)\|_{C^0(\bar\Omega)}=0,\quad \lim_{t\to 0}\|v(\cdot,t;u_0,v_0)-v_0(\cdot)\|_{C^0(\bar\Omega)}=0.
$$
 Moreover,
\[0\leq\int_{\Omega}u(t) \leq \max\left\{\int_{\Omega}u_0,\frac{a_0+\sqrt{a_0^2+4\left(\frac{{a_1-|\Omega|(a_3)_-}}{|\Omega|}\right)(a_4)_-M}}{2\left(\frac{{a_1-|\Omega|(a_3)_-}}{|\Omega|}\right)} \right\} ,\]
and
\[0\leq\int_{\Omega}v(t)\leq \max\left\{\int_{\Omega}v_0,\frac{b_0+\sqrt{b_0^2+4\left(\frac{ b_2-|\Omega|(b_4)_-}{|\Omega|}\right)(b_3)_-M}}{2\left(\frac{ b_2-|\Omega|(b_4)_-}{|\Omega|}\right)} \right\}, \]
where
\[M=\max\left\{\|u_0\|_1 \|v_0\|_1,\,\,\frac{(a_0+b_0)^2|\Omega|^2}{4 \min\{(a_1-|\Omega|((a_3)_-+(b_3)_-))^2, (b_2-|\Omega|((b_4)_-+(a_4)_-))^2\}}\right\}.\]

\vspace{-0.1in}
\item[(3)] Assume that {\bf (H3)} holds. If in addition, either {\bf (H4)} holds or
$$
\min\left\{a_1-\left((a_{2})_-+\frac{(l+k)\chi_1}{d_3}\right)\ ,\ b_2-\left((b_{1})_- +\frac{(l+k)\chi_2}{d_3}\right) \right\}>0, \quad {\bf (H5)}
$$
or
$$\begin{cases}
 a_1&> \frac{n(a_2)_-}{n+2}+\frac{2(b_1)_-}{n+2}+\frac{\chi_1k(n-2)}{d_3n}+\frac{\chi_1l(n-2)}{d_3(n+2)}+2\frac{\chi_2k(n-2)}{d_3n(n+2)}
\nonumber\\
b_2&> \frac{n(b_1)_-}{n+2}+\frac{2(a_2)_-}{n+2}+\frac{\chi_2l(n-2)}{d_3n}+\frac{\chi_2l(n-2)}{d_3(n+2)}+2\frac{\chi_1l(n-2)}{d_3n(n+2)}
\end{cases},\quad  {\bf (H6)}
$$
holds, then for any  $u_0,v_0 \in C^0(\bar{\Omega})$  with $u_0\ge 0$ and $v_0\ge 0$,  system  $\eqref{u-v-w-eq1}$ has a unique bounded  global
  classical solution $(u(x,t;u_0,v_0)$, $v(x,t;u_0,v_0)$, $w(x,t;u_0,v_0))$ which satisfies that
$$\lim_{t\to 0}\|u(\cdot,t;u_0,v_0)-u_0(\cdot)\|_{C^0(\bar\Omega)}=0,\quad \lim_{t\to 0}\|v(\cdot,t;u_0,v_0)-v_0(\cdot)\|_{C^0(\bar\Omega)}=0.
$$

Furthermore
\begin{align*}
0\leq \int_{\Omega}u(t)+\int_{\Omega}v(t)  \leq \max \left\{  \int_{\Omega}u_0+\int_{\Omega}v_0, \,\, 2|\Omega|\frac{\max\{a_0,b_0\}}{\min\{\alpha,\beta\}}  \right\}
\quad \forall t >0,
\end{align*}
where
$$
\alpha=a_1-\frac{1}{2}\Big((a_2)_-+(b_1)_-+|\Omega|\left((a_4)_-+(b_3)_-\right)\Big)-|\Omega|(a_3)_-,
$$
and
$$
\beta= b_2-\frac{1}{2}\Big((a_2)_-+(b_1)_-+|\Omega|\left((a_4)_-+(b_3)_-\right)\Big)-|\Omega|(b_4)_-.
$$
\end{itemize}
\end{theorem}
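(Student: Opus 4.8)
The plan is to treat the three parts on a common footing and to reduce everything to scalar differential inequalities extracted from the maximum principle; the only genuinely hard step is the final passage from integral (mass) bounds to $L^\infty$ bounds in parts (2)--(3). First I would set up the framework shared by all three parts. Since the third equation is elliptic, for each fixed $t$ one solves $0=d_3\Delta w+ku+lv-\lambda w$ with Neumann data uniquely, so $w=(\lambda-d_3\Delta)^{-1}(ku+lv)$ depends linearly on $(u,v)$; substituting this into the first two equations recasts \eqref{u-v-w-eq1} as a quasilinear parabolic system for $(u,v)$ with nonlocal coefficients. Local existence of a unique classical solution on a maximal interval $[0,T_{\max})$, the attainment of the initial data, and the extensibility criterion
$$T_{\max}<\infty \implies \limsup_{t\nearrow T_{\max}}\big(\|u(\cdot,t)\|_\infty+\|v(\cdot,t)\|_\infty\big)=\infty,$$
then follow from Amann's theory (or a contraction argument built on the smoothing of the Neumann heat semigroup). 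Nonnegativity of $u,v$ is immediate from the parabolic maximum principle because each reaction term carries the factor $u$ (resp. $v$), and then $ku+lv\ge0$ forces $w\ge0$. The decisive algebraic move is to eliminate $\Delta w$: writing $\nabla\!\cdot(u\nabla w)=\nabla u\cdot\nabla w+u\Delta w$ and $\Delta w=\tfrac1{d_3}(\lambda w-ku-lv)$ turns the $u$-equation into
$$u_t=d_1\Delta u-\chi_1\nabla u\cdot\nabla w+u\Big(a_0-\tfrac{\chi_1\lambda}{d_3}w-(a_1-\tfrac{\chi_1k}{d_3})u-(a_2-\tfrac{\chi_1l}{d_3})v-a_3\!\int_\Omega u-a_4\!\int_\Omega v\Big),$$
and symmetrically for $v$. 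Since $w\ge0$, the term $-\tfrac{\chi_1\lambda}{d_3}w$ has a sign and is discarded in upper estimates, while the cross-diffusion has become the favorable quadratic $-(a_1-\tfrac{\chi_1k}{d_3})u^2$; this is exactly why the chemotactic strength enters the hypotheses only through $\tfrac{\chi k}{d_3},\tfrac{\chi l}{d_3}$.

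For part (1) I would run a pointwise comparison. Evaluating the recast equation at a spatial maximum of $u(\cdot,t)$ (where $\nabla u=0$, $\Delta u\le0$, so the transport term drops) and bounding every indefinite coefficient by its negative part gives, for $y:=\|u\|_\infty$ and $z:=\|v\|_\infty$, the inequalities $y'\le y(a_0-A_1y+B_1z)$ and $z'\le z(b_0-A_2z+B_2y)$ with $A_1=a_1-\tfrac{\chi_1k}{d_3}-|\Omega|(a_3)_-$, $B_1=(a_2)_-+\tfrac{\chi_1l}{d_3}+|\Omega|(a_4)_-$ and symmetric $A_2,B_2$. Multiplying and adding yields $\tfrac{d}{dt}(yz)\le yz\big(a_0+b_0-(A_1-B_2)y-(A_2-B_1)z\big)$; under \textbf{(H1)} one checks that $A_1-B_2$ and $A_2-B_1$ are precisely the two entries of $L$, so $\tfrac{d}{dt}(yz)\le yz(a_0+b_0-2L\sqrt{yz})$ and hence $\|u\|_\infty\|v\|_\infty\le M_{00}$. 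Feeding this back into $y'\le a_0y-A_1y^2+B_1\,yz\le a_0y-A_1y^2+B_1M_{00}$ identifies $M_{01}$ as the positive root of the equilibrium quadratic, and likewise $M_{02}$; these bounds are finite and time-uniform, so the extensibility criterion forces $T_{\max}=\infty$.

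Parts (2) and (3) replace the pointwise comparison by its $L^1$ analogue: integrating the original equations over $\Omega$ kills the diffusion and chemotaxis terms by the Neumann condition, leaving closed inequalities for $U:=\int_\Omega u$ and $V:=\int_\Omega v$ after discarding the cross integrals $\int_\Omega uv$ (nonpositive once \textbf{(H4)} makes $a_2,b_1>0$) and using $\int_\Omega u^2\ge U^2/|\Omega|$. In part (2) the same product-then-individual scheme, with $L$ replaced by $\min\{(a_1-|\Omega|((a_3)_-+(b_3)_-))/|\Omega|,\,(b_2-|\Omega|((b_4)_-+(a_4)_-))/|\Omega|\}$, produces $UV\le M$ and then the stated bounds on $U,V$; in part (3) one instead adds the two integrated equations and controls $\int_\Omega uv$ by Young's inequality, which manufactures the coefficients $\alpha,\beta$ (positive precisely under \textbf{(H3)}), and a single logistic inequality $S'\le \max\{a_0,b_0\}S-\tfrac{\min\{\alpha,\beta\}}{2|\Omega|}S^2$ for $S:=U+V$ gives the asserted mass bound.

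The main obstacle is the one step that is not mere ODE comparison: upgrading these uniform $L^1$ bounds to the $L^\infty$ bound demanded by the extensibility criterion. I would do this by an $L^p$ energy estimate on the recast equations---testing with $u^{p-1}$ (resp. $v^{p-1}$), integrating the transport term by parts to reuse $\Delta w=\tfrac1{d_3}(\lambda w-ku-lv)$, and interpolating the resulting supercritical integrals $\int_\Omega u^{p+1}$, $\int_\Omega u^pv$ against the dissipation $\int_\Omega u^{p-2}|\nabla u|^2$ via Gagliardo--Nirenberg, the established mass bounds supplying the low-order norm. Conditions \textbf{(H4)}, \textbf{(H5)}, \textbf{(H6)} are precisely three ways of arranging Young's inequality so that those supercritical terms are absorbed by the logistic/competition damping, the dimensional factors $\tfrac{n-2}{n}$ being the footprint of the Sobolev exponents in this interpolation. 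Once a uniform bound on $\int_\Omega u^p$ is secured for some $p$ large relative to $n$, a Moser--Alikakos iteration---or the $L^p$--$L^q$ smoothing of the Neumann semigroup in the variation-of-constants formula, after bounding $\nabla w$ by elliptic regularity---yields the $L^\infty$ bound, and again $T_{\max}=\infty$.
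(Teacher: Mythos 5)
Your overall strategy is sound and your parts (2)--(3) coincide with the paper's route: integrate the equations to get closed differential inequalities for the masses (Lemmas \ref{lem-0002} and \ref{lem-0003}), then test with $u^{\gamma-1}$, use the elliptic equation to convert $\nabla u\cdot\nabla w$ into $\tfrac{k}{d_3\gamma}\int u^{\gamma+1}+\tfrac{l}{d_3\gamma}\int u^{\gamma}v-\tfrac{\lambda}{d_3\gamma}\int wu^{\gamma}$, absorb the supercritical terms under \textbf{(H4)}/\textbf{(H5)}/\textbf{(H6)} with Young and Gagliardo--Nirenberg (Lemma \ref{lem-0004} and its corollaries), and finish with Moser--Alikakos. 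Where you genuinely diverge is part (1). The paper does \emph{not} differentiate $\|u(\cdot,t)\|_\infty$ at a spatial maximum; it uses the ``rectangles method'': it introduces the four-dimensional ODE system \eqref{ode00} for $(\overline u,\underline u,\overline v,\underline v)$, shows its solutions are ordered and global under \textbf{(H1)} (Lemmas \ref{lem-0000}--\ref{lem-0001}), and then proves the confinement $\underline u(t)\le u\le\overline u(t)$, $\underline v(t)\le v\le\overline v(t)$ by an $L^2$ Gronwall estimate on $(\overline U)_+=(u-\overline u)_+$, $(\underline U)_-$, etc. Your pointwise comparison reaches the same scalar inequalities $y'\le y(a_0-A_1y+B_1z)$, $z'\le z(b_0+B_2y-A_2z)$ with the same constants, and hence the same $M_{00},M_{01},M_{02}$; it is shorter, but to be complete you must justify (i) differentiating the sup-norm (upper Dini derivatives and the product/logarithm rule for Lipschitz functions), and (ii) that $\Delta u\le 0$ also at a \emph{boundary} maximum, which holds here because the Neumann condition kills the normal first derivative so all first derivatives vanish and the curvature corrections to $\Delta$ drop out. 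What the paper's heavier machinery buys is exactly the avoidance of these Dini/boundary technicalities, plus two-sided (lower as well as upper) time-dependent bounds; what yours buys is brevity. Both yield the stated constants and, via the blow-up criterion of Lemma \ref{lm-local-001}, global existence.
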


\medskip

\begin{remark}
\begin{itemize}
\vspace{-0.1in}\item[(1)] When  system $\eqref{u-v-w-eq1}$ is considered in competitive case, that is,
 $a_i,b_i > 0$, then
{\bf (H2)} and {\bf (H3)} hold trivially. If in addition,  $n=1$ or $n=2,$ then hypotheses
{\bf (H5)} and {\bf (H6)} hold trivially. In this case, Theorem \ref{thm-global-001} (2) and Theorem \ref{thm-global-001} (3) imply that, for every nonnegative bounded and uniformly continuous initials  $(u_0,v_0)$, \eqref{u-v-w-eq1} has a unique bounded global classical solution.  This rules out the blow-up as for the case of one species.

\vspace{-0.1in}\item[(2)] When  system $\eqref{u-v-w-eq1}$ is considered in the competitive case, it follows from Theorem \ref{thm-global-001}(2) and Theorem \ref{thm-global-001}(3) that if {\bf (H4)} holds, then for  every nonnegative bounded and uniformly continuous initials  $(u_0,v_0)$, \eqref{u-v-w-eq1} has a unique bounded global classical solution. It remains open whether under hypothesis {\bf (H4)}, \eqref{u-v-w-eq1} has global bounded classical solution for every nonnegative initials $u_0,v_0\in C^0(\overline{\Omega})$ in the global cooperative case.


\vspace{-0.1in}\item[(3)] If $a_2\leq 0$ and $b_1\leq 0,$ or $a_2,b_1$ very small, then hypothesis {\bf (H4)} is never satisfied. In such case Theorem \ref{thm-global-001} (1) and Theorem \ref{thm-global-001} (3) provide sufficient condition on the conditions for the existence of bounded global classical solutions.

\vspace{-0.1in}\item[(4)] When  system $\eqref{u-v-w-eq1}$ is in the competitive case, {\bf (H1) } holds if and only if  $a_1>k\frac{\chi_1+\chi_2}{d_3}$ and $b_2>l\frac{\chi_1+\chi_2}{d_3}$. While {\bf (H5)} holds if and only if $a_1>\frac{(l+k)\chi_1}{d_3}$ and $b_2>\frac{(l+k)\chi_2}{d_3}$. In this case if either {\bf (H1)} or {\bf (H5)} holds, it follows from Theorem \ref{thm-global-001} that for every nonnegative bounded and uniformly continuous initials  $(u_0,v_0)$, \eqref{u-v-w-eq1} has a unique bounded global classical solution. Note the hypotheses {\bf (H1)} and {\bf (H5)} are not comparable.
\end{itemize}
\end{remark}



Next, we state our result on the phenomenon of coexistence in the general competitive-cooperative case.
\begin{theorem}
\label{Asym-Th-1}
 Assume that
 {\bf (H1)} holds, and suppose furthermore that
\begin{equation}
\label{Asymp-coexist-eq-01}
 a_1 >2\frac{\chi_1 }{d_3}k+|\Omega||a_3|,
\end{equation}
\begin{equation}
\label{Asymp-coexist-eq-02}
b_2 >2\frac{\chi_2 }{d_3}l+|\Omega||b_4|,
\end{equation}
\begin{equation}
\label{Asymp-coexist-eq-03}
\frac{a_2+|\Omega|a_4 }{b_2+|\Omega|b_4}< \frac{a_0}{b_0}<\frac{a_{1}+|\Omega|a_3}{b_1+|\Omega|b_3},
\end{equation}
 and
\begin{equation}
\label{Asymp-coexist-eq-4}
\begin{cases}
\left\{ a_1-2\frac{\chi_1}{d_3}k-|\Omega||a_3|\right\} \left\{ b_2-2\frac{\chi_2}{d_3}l-|\Omega||b_4| \right\} &\cr
>  \left(|a_2|+|\Omega||a_4|+l\frac{\chi_1}{d_3}\right)\left(|b_1|+|\Omega||b_3|+k\frac{\chi_2}{d_3}\right)&.
\end{cases}
\end{equation}
Then for every nonnegative initial functions $u_{0},v_0\in C^{0}(\overline{\Omega})$ satisfying $$\min\{\|u_0\|_{\infty};\|v_0\|_\infty\}>0, $$
\eqref{u-v-w-eq1} has a unique bounded and globally defined classical solution $$(u(\cdot,\cdot;u_0,v_0),v(\cdot,\cdot;u_0,v_0),w(\cdot,\cdot;u_0,v_0)).$$ Moreover, it holds that
 \begin{equation}\label{MainAsym-eq-01}
 \lim_{t\to\infty}\left\|u(\cdot,t;u_0,v_0)-u^*\right\|_\infty=0,
 \end{equation}
\begin{equation}\label{MainAsym-eq-02}
\lim_{t\to\infty}\left\|v(\cdot,t;u_0,v_0)-v^*\right\|_\infty=0,
\end{equation}
and
 \begin{equation}\label{MainAsym-eq-001}
 \lim_{t\to\infty}\left\|w(\cdot,t;u_0,v_0)-\frac{ku^*+lv^*}{\lambda}\right\|_\infty=0,
 \end{equation}

where
$$
u^*=\frac{a_0(b_2+|\Omega|b_4)-b_0(a_2+|\Omega|a_4)}{(b_2+|\Omega|b_4)(a_1+|\Omega|a_3)-(a_2+|\Omega|a_4)(b_1+|\Omega|b_3)},$$
and
$$ v^*=\frac{a_0(b_1+|\Omega|b_3)-b_0(a_1+|\Omega|a_3)}{(b_1+|\Omega|b_3)(a_2+|\Omega|a_4)-(a_1+|\Omega|a_3)(b_2+|\Omega|b_4)}.
$$
\end{theorem}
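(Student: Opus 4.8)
The plan is to prove Theorem \ref{Asym-Th-1} by the method of eventual comparison, adapting the strategy of \cite{TBJLMM16} to absorb the nonlocal reaction terms. Global existence and boundedness of $(u,v,w)$ come for free from Theorem \ref{thm-global-001}(1), whose hypothesis \textbf{(H1)} is assumed here; so the genuine content is the convergence \eqref{MainAsym-eq-01}--\eqref{MainAsym-eq-001}. The first step is to trade the chemotactic term for a pure reaction: using $\nabla\cdot(u\nabla w)=\nabla u\cdot\nabla w+u\Delta w$ together with the elliptic identity $d_3\Delta w=\lambda w-ku-lv$, the first equation becomes $u_t=d_1\Delta u-\chi_1\nabla u\cdot\nabla w+u\,g_1$, where $g_1=a_0-(a_1-\tfrac{\chi_1 k}{d_3})u-(a_2-\tfrac{\chi_1 l}{d_3})v-a_3\int_\Omega u-a_4\int_\Omega v-\tfrac{\chi_1\lambda}{d_3}w$, and symmetrically for $v$. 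A maximum-principle estimate on the elliptic equation then pins $w$ between its homogeneous bounds, $\tfrac{1}{\lambda}(k\inf_\Omega u+l\inf_\Omega v)\le w\le\tfrac{1}{\lambda}(k\sup_\Omega u+l\sup_\Omega v)$, which is what lets us replace the unknown $w$ by sup/inf data of $u$ and $v$.

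Next I would track the four scalars $p(t)=\sup_\Omega u(\cdot,t)$, $\underline p(t)=\inf_\Omega u(\cdot,t)$, $q(t)=\sup_\Omega v(\cdot,t)$, $\underline q(t)=\inf_\Omega v(\cdot,t)$. Evaluating the rewritten equation at a spatial maximum $x_0$ of $u$ (where $\nabla u=0$ kills the transport term and $\Delta u\le 0$) and inserting the $w$-bounds produces a differential inequality of the schematic form $\tfrac{d}{dt}p\le p\big(a_0-(a_1-\tfrac{\chi_1 k}{d_3})p-\tfrac{\chi_1 k}{d_3}\underline p+\cdots\big)$, in which the chemotactic contribution enters only through the oscillation $\tfrac{\chi_1 k}{d_3}(p-\underline p)$; the analogous computation at a minimum gives a matching lower inequality, and likewise for $v$. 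This is precisely why the hypotheses carry the factors $2\tfrac{\chi_1}{d_3}k$ and $2\tfrac{\chi_2}{d_3}l$ in \eqref{Asymp-coexist-eq-01}--\eqref{Asymp-coexist-eq-02}. Passing to $\overline U=\limsup_{t\to\infty}p$, $\underline U=\liminf_{t\to\infty}\underline p$, and $\overline V,\underline V$ (and bounding each nonlocal integral by $|\Omega|$ times the relevant sup or inf, with the sign dictated by $(a_3)_\pm,(a_4)_\pm,(b_3)_\pm,(b_4)_\pm$) collapses these dynamic inequalities to a closed algebraic system relating $\overline U,\underline U,\overline V,\underline V$.

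The heart of the argument is to show this algebraic system admits only $\overline U=\underline U=u^*$ and $\overline V=\underline V=v^*$. First, \eqref{Asymp-coexist-eq-03} guarantees $u^*,v^*>0$, so by the strong maximum principle (together with $\min\{\|u_0\|_\infty,\|v_0\|_\infty\}>0$) the species persist and $\inf_\Omega u,\inf_\Omega v$ are eventually bounded away from $0$, legitimizing division by $\overline U,\underline U,\overline V,\underline V$. Writing the gaps $X=\overline U-\underline U\ge 0$, $Y=\overline V-\underline V\ge 0$ and subtracting the upper from the lower inequalities yields $D_1 X\le O_1 Y$ and $D_2 Y\le O_2 X$, with $D_1=a_1-2\tfrac{\chi_1}{d_3}k-|\Omega||a_3|$, $D_2=b_2-2\tfrac{\chi_2}{d_3}l-|\Omega||b_4|$, $O_1=|a_2|+|\Omega||a_4|+l\tfrac{\chi_1}{d_3}$, $O_2=|b_1|+|\Omega||b_3|+k\tfrac{\chi_2}{d_3}$, where $D_1,D_2>0$ by \eqref{Asymp-coexist-eq-01}--\eqref{Asymp-coexist-eq-02}. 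Multiplying gives $D_1D_2\,XY\le O_1O_2\,XY$, and since $D_1D_2>O_1O_2$ by \eqref{Asymp-coexist-eq-4} we get $XY=0$, whence either gap inequality forces $X=Y=0$. Feeding the common values $\overline U=\underline U$, $\overline V=\underline V$ back into the system leaves exactly the linear equations defining $(u^*,v^*)$, whose unique solution is the stated formula. The uniform convergences \eqref{MainAsym-eq-01}--\eqref{MainAsym-eq-02} follow, and \eqref{MainAsym-eq-001} is obtained by inserting $u\to u^*$, $v\to v^*$ into the elliptic equation and invoking continuity of $(-d_3\Delta+\lambda)^{-1}$ on $C^0(\overline{\Omega})$.

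The step I expect to be the main obstacle is making the passage from the dynamic differential inequalities to the limiting algebraic system fully rigorous: the extremal points $x_0(t)$ move in time, so $p(t),\underline p(t)$ are only Lipschitz, and one must justify the ``derivative $\le 0$ along a near-maximizing sequence'' step (a Barb\u alat/Hamilton-type lemma) while simultaneously bundling the sign contributions of the nonlocal integrals into $D_i,O_i$ so that \eqref{Asymp-coexist-eq-4} still closes the estimate. Securing the eventual strict positive lower bound for $\inf_\Omega u$ and $\inf_\Omega v$ in the presence of chemotactic gradients is the other delicate point, since it is what permits the division that turns the four extremal inequalities into a solvable system.
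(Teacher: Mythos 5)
Your overall architecture matches the paper's: rewrite the chemotactic term via the elliptic identity $d_3\Delta w=\lambda w-ku-lv$, derive differential inequalities for the spatial extrema, pass to $\limsup$/$\liminf$ to get a closed algebraic system, and close it by the product estimate $D_1D_2>O_1O_2$ coming from \eqref{Asymp-coexist-eq-4}. The endgame (gap inequalities, multiplication, $X=Y=0$, then the linear system for $(u^*,v^*)$) is exactly the paper's proof of Theorem \ref{Asym-Th-1}. The paper avoids your "moving maximum point" worry by comparing the PDE with explicit auxiliary logistic ODEs (equations \eqref{F01} and \eqref{F04}) via the parabolic comparison principle, which is a cleaner route to the same limiting inequalities; that difference is cosmetic.

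The genuine gap is in your persistence step. The limiting inequalities one actually obtains (the paper's \eqref{Asymp-coexist-eq-5}--\eqref{Asymp-coexist-eq-8}) carry positive parts $\{\cdot\}_+$, equivalently your extremal-point argument yields the dichotomy ``either the extremal value tends to $0$ or the bracketed expression is nonnegative.'' Before you may subtract the upper from the lower inequality to get $D_1X\le O_1Y$ and $D_2Y\le O_2X$, you must rule out the degenerate branches. You do this by asserting that the strong maximum principle together with $\min\{\|u_0\|_\infty,\|v_0\|_\infty\}>0$ makes $\inf_\Omega u$ and $\inf_\Omega v$ \emph{eventually bounded away from zero}. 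That is not what the strong maximum principle gives: it yields $u(\cdot,t)>0$ for each fixed $t>0$, but says nothing about $\liminf_{t\to\infty}\min_{\overline\Omega}u$, which could well be $0$ (this is precisely what happens in the exclusion regime of Theorem \ref{Asym-Th-2}). The paper closes this hole with Lemma \ref{lem-asym-coexist-03}: a three-case contradiction argument showing that if a bracketed quantity were negative, the corresponding species would satisfy $\overline{u}=\underline{u}=0$ (or $\overline{v}=\underline{v}=0$), forcing the surviving species to converge to its single-species carrying capacity $a_0/(a_1+|\Omega|a_3)$ or $b_0/(b_2+|\Omega|b_4)$, which then contradicts \eqref{Asymp-coexist-eq-03}. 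In your write-up \eqref{Asymp-coexist-eq-03} is used only to conclude $u^*,v^*>0$; its real role is to rule out convergence to the semitrivial states, and without an argument of the type of Lemma \ref{lem-asym-coexist-03} your subtraction step is not justified.
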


\medskip

\begin{remark}
\begin{itemize}
\vspace{-0.1in}\item[(1)] Note that hypothesis {\bf (H1)} assumed in Theorem \ref{Asym-Th-1} can be replaced by any hypothesis on the parameters under which global existence of bounded classical solution holds.

\vspace{-0.1in}\item[(2)] Condition \eqref{Asymp-coexist-eq-03} implies the semi trivial homogeneous solutions $(\frac{a_0}{a_1+|\Omega|a_3},0,\frac{ka_0}{\lambda(a_1+|\Omega|a_3)})$ and $(0,\frac{b_0}{b_2+|\Omega|b_4},\frac{lb_0}{\lambda(b_2+|\Omega|b_4)})$ are unstable.\\
 Indeed $b_0>\frac{a_0\left(b_1+|\Omega|b_3\right)}{a_1+|\Omega|a_3}$ implies  $(\frac{a_0}{a_1+|\Omega|a_3},0,\frac{ka_0}{\lambda(a_1+|\Omega|a_3)})$ is unstable and $a_0>\frac{b_0(a_2+|\Omega|a_4)}{b_2+|\Omega|b_4}$  implies  $(0,\frac{b_0}{b_2+|\Omega|b_4},\frac{lb_0}{\lambda(b_2+|\Omega|b_4)})$ is unstable.

\vspace{-0.1in}\item[(3)] In the case of system $\eqref{u-v-w-eq1bis}$, \eqref{Asymp-coexist-eq-03} becomes $\tilde a_2 <1$ and $\tilde b_1<1$, that is \eqref{Asymp-coexist-eq-03} indicates in general a weak competition. Furthermore conditions \eqref{Asymp-coexist-eq-01}, \eqref{Asymp-coexist-eq-02} and \eqref{Asymp-coexist-eq-4} becomes respectively $\mu_1> \frac{2\chi_1k}{d_3},$ $\mu_2> \frac{2\chi_2l}{d_3},$ and $\left(\mu_1-\frac{2\chi_1k}{d_3}\right)\left(\mu_2-\frac{2\chi_2l}{d_3}\right)>\left(\mu_1\tilde a_2+\frac{\chi_1l}{d_3}\right)\left(\mu_2\tilde b_1+\frac{\chi_2k}{d_3}\right).$   If in addition $\chi_1=\chi_2=0,$ all this last three conditions become $\tilde a_2\tilde b_2<1$ which is trivially true in this weak completion case of $\tilde a_2<1$ and $\tilde b_1<1.$
\end{itemize}
\end{remark}

\medskip

Following similar arguments as the proof of Theorem \ref{Asym-Th-1}, we can prove the following important  result on coexistence in the competitive case that $a_i, b_i>0$, $i=1,2,3,4$.
\begin{theorem}
\label{Exclusion-comp-Th-1}
Suppose \eqref{Asymp-coexist-eq-01}, \eqref{Asymp-coexist-eq-02}, \eqref{Asymp-coexist-eq-03},
\begin{equation}\label{Exclusion-comp-eq-00}
\min\{a_2-\frac{\chi_1}{d_3}l, b_1-\frac{\chi_2}{d_3}k\}\geq 0,
\end{equation}
 and
\begin{equation}
\label{Exclusion-comp-eq-01}
\left\{ a_1-2\frac{\chi_1}{d_3}k-|\Omega|a_3\right\} \left\{ b_2-2\frac{\chi_2}{d_3}l-|\Omega|b_4 \right\} >  (a_2+|\Omega|a_4)(b_1+|\Omega|b_3).
\end{equation}
Then for every nonnegative initial functions $u_{0},v_0\in C^{0}(\overline{\Omega})$ satisfying $$\min\{\|u_0\|_{\infty};\|v_0\|_\infty\}>0, $$
\eqref{u-v-w-eq1} has a unique bounded and globally defined classical solution $$(u(\cdot,\cdot;u_0,v_0),v(\cdot,\cdot;u_0,v_0),w(\cdot,\cdot;u_0,v_0))$$
 Moreover, it holds that
 \begin{equation*}
 \lim_{t\to\infty}\left\|u(\cdot,t;u_0;v_0)-u^*\right\|_\infty=0,
 \end{equation*}
\begin{equation*}
\lim_{t\to\infty}\left\|v(\cdot,\cdot;u_0;v_0)-v^*\right\|_\infty=0,
\end{equation*}
and
 \begin{equation*}
 \lim_{t\to\infty}\left\|w(\cdot,t;u_0,v_0)-\frac{ku^*+lv^*}{\lambda}\right\|_\infty=0.
 \end{equation*}

\end{theorem}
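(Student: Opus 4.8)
\textit{Proof plan.} The plan is to follow the eventual-comparison scheme already used for Theorem~\ref{Asym-Th-1}, the essential simplification here being that the one-signed assumption \eqref{Exclusion-comp-eq-00} lets me drop the absolute values of \eqref{Asymp-coexist-eq-4} and work instead with the sharper product in \eqref{Exclusion-comp-eq-01}. Global existence and boundedness are not the real issue: in the competitive regime $a_i,b_i>0$ the hypotheses {\bf (H2)}--{\bf (H3)} hold automatically and \eqref{Asymp-coexist-eq-01}--\eqref{Asymp-coexist-eq-02} are far stronger than what Theorem~\ref{thm-global-001} requires, so I would quote global existence from Theorem~\ref{thm-global-001} and devote the work to the long-time behaviour.

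First I would remove $\Delta w$ from the taxis terms. Using $-\chi_1\nabla\cdot(u\nabla w)=-\chi_1\nabla u\cdot\nabla w-\chi_1u\Delta w$ and $d_3\Delta w=\lambda w-ku-lv$, the first equation becomes
\[
u_t=d_1\Delta u-\chi_1\nabla u\cdot\nabla w+u\Big[a_0-\big(a_1-\tfrac{\chi_1k}{d_3}\big)u-\big(a_2-\tfrac{\chi_1l}{d_3}\big)v-a_3\!\int_\Omega u-a_4\!\int_\Omega v-\tfrac{\chi_1\lambda}{d_3}w\Big],
\]
and symmetrically for $v$. The maximum principle for the elliptic equation gives the pointwise sandwich
\[
\tfrac{1}{\lambda}\big(k\min_\Omega u+l\min_\Omega v\big)\le w\le\tfrac{1}{\lambda}\big(k\max_\Omega u+l\max_\Omega v\big),
\]
which converts the nonlocal chemical term into controllable multiples of the spatial extrema of $u$ and $v$.

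Next I would track $M_u(t)=\max_\Omega u,\ m_u(t)=\min_\Omega u$ and their $v$-analogues. At a spatial maximum of $u$ the taxis gradient vanishes and $\Delta u\le0$; inserting the lower bound for $w$, using $a_2-\tfrac{\chi_1l}{d_3}\ge0$ from \eqref{Exclusion-comp-eq-00} to replace $v$ by $m_v$, and $\int_\Omega u\ge|\Omega|m_u$ (valid since $a_3,a_4>0$), I obtain a differential inequality for $M_u$; the key point is that the $\tfrac{\chi_1l}{d_3}v$ contribution cancels the $\tfrac{\chi_1l}{d_3}m_v$ coming from $w$, leaving only $a_2m_v$. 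The mirror inequality holds at a minimum, and both have $v$-counterparts. Passing to $\overline U=\limsup_tM_u,\ \underline U=\liminf_tm_u,\ \overline V,\underline V$ and subtracting the upper and lower $u$-inequalities collapses the $\chi_1k/d_3$ terms into a factor $2$, giving
\[
\big(a_1-2\tfrac{\chi_1k}{d_3}-|\Omega|a_3\big)(\overline U-\underline U)\le(a_2+|\Omega|a_4)(\overline V-\underline V),
\]
together with the mirror bound $\big(b_2-2\tfrac{\chi_2l}{d_3}-|\Omega|b_4\big)(\overline V-\underline V)\le(b_1+|\Omega|b_3)(\overline U-\underline U)$. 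Multiplying these and invoking \eqref{Exclusion-comp-eq-01} forces $\overline U=\underline U$ and $\overline V=\underline V$, so the spatial oscillations die out; the extremal inequalities then reduce to the equalities $(a_1+|\Omega|a_3)U_\infty+(a_2+|\Omega|a_4)V_\infty=a_0$ and $(b_1+|\Omega|b_3)U_\infty+(b_2+|\Omega|b_4)V_\infty=b_0$, whose unique solution is $(u^*,v^*)$. Condition \eqref{Asymp-coexist-eq-03}, with positivity of the determinant guaranteed by \eqref{Exclusion-comp-eq-01}, yields $u^*,v^*>0$ (which also justifies the positivity of the data), and $\|u(\cdot,t)-u^*\|_\infty\le\max\{M_u-u^*,u^*-m_u\}\to0$ gives the asserted convergence of $u$, likewise for $v$; the convergence of $w$ then follows from the elliptic equation and the $C^0$-convergence of $u,v$.

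I expect the main obstacle to be making the scalar differential inequalities and their limits rigorous: $M_u(t),m_u(t),\dots$ are only Lipschitz in $t$, the extrema are attained at moving and possibly boundary points, and $w,\int_\Omega u,\int_\Omega v$ are evaluated away from where the extrema sit. Justifying the a.e.\ differentiation of the extremal functions (controlling the taxis gradient at boundary maxima via the Neumann condition) and showing that the four coupled quantities decouple into the two scalar inequalities above is the technical heart; this is exactly the step carried out in detail for Theorem~\ref{Asym-Th-1}, which I would adapt here.
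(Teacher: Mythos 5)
Your plan is correct and is essentially the paper's own argument: the paper proves Theorem~\ref{Exclusion-comp-Th-1} simply by repeating the eventual-comparison scheme of Theorem~\ref{Asym-Th-1}, with \eqref{Exclusion-comp-eq-00} and the signs $a_i,b_i>0$ producing exactly the sharpened inequalities
$\big(a_1-2\tfrac{\chi_1k}{d_3}-|\Omega|a_3\big)(\overline{u}-\underline{u})\le(a_2+|\Omega|a_4)(\overline{v}-\underline{v})$ and its mirror, which you also arrive at. The one place where your route genuinely differs is the derivation of the scalar differential inequalities: you propose to differentiate $M_u(t)=\max_\Omega u$ at moving extremal points and flag the resulting regularity and boundary issues as ``the technical heart,'' whereas the paper (Lemmas~\ref{lem-asym-coexist-01}--\ref{lem-asym-coexist-02}) never differentiates the extremal functions at all --- it fixes $\varepsilon>0$, uses \eqref{Asym-eq-01}--\eqref{Asym-eq-02} to freeze the coupling terms into constants for $t\ge T_\varepsilon$, and then compares $u$ with the explicit solution $\overline{U}_\varepsilon$ of a spatially homogeneous logistic ODE via the parabolic comparison principle, letting $t\to\infty$ and then $\varepsilon\to0$; this buys you the same inequalities with no a.e.-differentiability argument needed. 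One small caution: condition \eqref{Asymp-coexist-eq-03} is not used merely at the end to check $u^*,v^*>0$ --- it is needed earlier, in the analogue of Lemma~\ref{lem-asym-coexist-03}, to rule out the degenerate cases in which the positive parts in \eqref{Asymp-coexist-eq-5}--\eqref{Asymp-coexist-eq-8} clip to zero and the limit collapses onto a semitrivial state; your reduction of the extremal inequalities to the linear system for $(u^*,v^*)$ silently assumes this step, so make sure to include it when you adapt the Theorem~\ref{Asym-Th-1} machinery.
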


\medskip

\begin{remark}
 In the case of system $\eqref{u-v-w-eq1bis}$, conditions \eqref{Asymp-coexist-eq-01}, \eqref{Asymp-coexist-eq-02}, \eqref{Asymp-coexist-eq-03} and  \eqref{Exclusion-comp-eq-00} become condition \eqref{eq-int-002}. Furthermore condition \eqref{Exclusion-comp-eq-01} becomes  \eqref{eq-int-003}. Thus Theorem \ref{Exclusion-comp-Th-1} is consistent with the coexistence result in \cite{TBJLMM16}.
\end{remark}

Finally we state the main results on exclusion
\begin{theorem}
\label{Asym-Th-2}
  Assume that
{\bf (H1)}, and suppose furthermore that
\eqref{Asymp-coexist-eq-02} holds,  $a_4\geq 0,$
\begin{equation}
\label{Asymp-exclusion-eq-00}
 a_2\geq\frac{\chi_1 }{d_3}l,
\end{equation}
\begin{equation}
\label{Asymp-exclusion-eq-01}
  a_1 >\frac{\chi_1 }{d_3}k+|\Omega|(a_3)_-,
\end{equation}
\begin{equation}
\label{Asymp-exclusion-eq-02}
b_0 \geq \frac{b_2+|\Omega|b_4}{a_2+a_{4}|\Omega|}a_0,
\end{equation}
and
\begin{equation}\label{Asymp-exclusion-eq-03}
\begin{cases}
\left(a_{1}-\frac{\chi_{1}k}{d_{3}}-|\Omega|(a_3)_-\right)\left(b_{2}-2\frac{\chi_{2}}{d_{3}}l-|\Omega||b_{4}|\right) b_0&\cr
>\left (b_{2}-\frac{\chi_{2}}{d_{3}}l-|\Omega|(b_{4})_-\right)\left(b_{1}+b_{3}|\Omega|\right)a_0+|\Omega|(b_3)_-\left (b_{4}+\frac{\chi_{2}}{d_{3}}l\right)a_0 \,\text{if}  \quad b_1>\frac{\chi_2k}{d_3}&\cr

\left(a_{1}-\frac{\chi_{1}k}{d_{3}}-|\Omega|(a_3)_-\right)\left(b_{2}-2\frac{\chi_{2}}{d_{3}}l-|\Omega||b_{4}|\right) b_0&\cr
>\left (b_{2}-\frac{\chi_{2}}{d_{3}}l-|\Omega|(b_{4})_-\right)\left(|\Omega|(b_{3})_++\frac{\chi_{2}}{d_{3}}k\right)a_0&\cr
 +\left(\frac{\chi_{2}}{d_{3}}k-b_1+|\Omega|(b_3)_-\right)\left (b_{4}+\frac{\chi_{2}}{d_{3}}l\right)a_0\,\text{if} \, \,b_1\leq \frac{\chi_2k}{d_3} &
\end{cases}
 \end{equation}
Then for every nonnegative initial functions $u_{0},v_0\in C^{0}(\overline{\Omega}),$ $u_0\geq 0,$ $v_0\geq 0,$ with $\|v_0\|_\infty>0, $
\eqref{u-v-w-eq1} has a unique bounded and globally defined classical solution $$(u(\cdot,\cdot;u_0,v_0),v(\cdot,\cdot;u_0;v_0),w(\cdot,\cdot;u_0,v_0)).$$ Moreover, it holds that
 \begin{equation}\label{MainAsym-eq-001}
 \lim_{t\to\infty}\left\|u(\cdot,t;u_0;v_0)\right\|_\infty=0,
 \end{equation}
\begin{equation}\label{MainAsym-eq-02}
\lim_{t\to\infty}\left\|v(\cdot,\cdot;u_0;v_0)-\frac{b_0}{b_2+|\Omega|b_4}\right\|_\infty=0,
\end{equation}
and
 \begin{equation}\label{MainAsym-eq-002}
\lim_{t\to\infty}\left\|w(\cdot,\cdot;u_0;v_0)-\frac{lb_0}{\lambda(b_2+|\Omega|b_4)}\right\|_\infty=0.
\end{equation}

\end{theorem}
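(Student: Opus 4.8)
The plan is to use the \emph{eventual comparison} method, reducing the large-time behavior of the PDE system to that of a coupled system of scalar logistic ODEs for the spatial extrema of $u$ and $v$, and then showing that the hypotheses drive this ODE system to the exclusion state. Set $V:=\tfrac{b_0}{b_2+|\Omega|b_4}$. I would first record the preliminaries and perform the key algebraic reduction. Theorem~\ref{thm-global-001}(1), applicable since (H1) is assumed, gives a unique bounded global classical solution with $u,v\ge0$; as $\|v_0\|_\infty>0$, the parabolic strong maximum principle yields $v(\cdot,t)>0$ for $t>0$, and applying the elliptic maximum principle to the third equation under the Neumann condition gives the two-sided envelope $\tfrac{k\min_\Omega u+l\min_\Omega v}{\lambda}\le w\le\tfrac{k\max_\Omega u+l\max_\Omega v}{\lambda}$. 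The decisive step is to remove the cross-diffusion: writing $\nabla\!\cdot(u\nabla w)=\nabla u\cdot\nabla w+u\Delta w$ and substituting $\Delta w=(\lambda w-ku-lv)/d_3$ recasts the first two equations as
$$u_t=d_1\Delta u-\chi_1\nabla w\cdot\nabla u+u\,g_u,\qquad v_t=d_2\Delta v-\chi_2\nabla w\cdot\nabla v+v\,g_v,$$
with
$$g_u=a_0-(a_1-\tfrac{\chi_1k}{d_3})u-(a_2-\tfrac{\chi_1l}{d_3})v-a_3\!\int_\Omega u-a_4\!\int_\Omega v-\tfrac{\chi_1\lambda}{d_3}w$$
and the analogous $g_v$. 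The transport terms $-\chi_i\nabla w\cdot\nabla u$ vanish on spatially homogeneous functions, so they do not interfere with comparison against constant sub/supersolutions, and the sign hypotheses now act directly on the reaction coefficients.

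Next I would build the comparison ODEs. Bounding each nonlocal integral by $|\Omega|$ times the appropriate extremum and replacing $w$ by its elliptic envelope turns $\overline u(t):=\max_\Omega u(\cdot,t)$ and $\underline v(t):=\min_\Omega v(\cdot,t)$ into sub/supersolutions of scalar logistic ODEs. Hypotheses \eqref{Asymp-exclusion-eq-00}, \eqref{Asymp-exclusion-eq-01}, and $a_4\ge0$ ensure that in the $\overline u$-equation the self-limitation coefficient $a_1-\tfrac{\chi_1k}{d_3}-|\Omega|(a_3)_-$ is positive while the $v$- and $\int_\Omega v$-terms are sign-definite, so the constant growth coefficient of the $\overline u$-supersolution is $a_0-(a_2-\tfrac{\chi_1l}{d_3})\underline v-a_4|\Omega|\underline v-\tfrac{\chi_1\lambda}{d_3}\underline w$. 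Using $\underline w\ge\tfrac{l\underline v}{\lambda}$ from the envelope, the chemotaxis contribution cancels the $+\tfrac{\chi_1l}{d_3}\underline v$ term and this coefficient simplifies to $a_0-(a_2+a_4|\Omega|)\underline v$, which by \eqref{Asymp-exclusion-eq-02} is $\le0$ precisely when $\underline v\ge V$. Hence, once $v$ is eventually bounded below near $V$, the logistic supersolution forces $\overline u\to0$.

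The genuine obstacle is the circular coupling: the $\overline u$-supersolution needs a lower bound on $v$, while driving $\underline v$ up to $V$ needs an upper bound on $u$, and the coefficient $-(b_1-\tfrac{\chi_2k}{d_3})u$ in $g_v$ changes sign with $b_1-\tfrac{\chi_2k}{d_3}$ — which is exactly why \eqref{Asymp-exclusion-eq-03} is stated in two cases. I would close the loop by bootstrapping: starting from the crude a~priori upper bounds on $u$ and $w$, derive a first positive value for $\liminf_{t\to\infty}\underline v$, feed it into the $\overline u$-ODE to lower $\limsup_{t\to\infty}\overline u$, and iterate. Condition \eqref{Asymp-exclusion-eq-03} is the quantitative determinant-type inequality guaranteeing that this iteration is contractive toward the fixed point $(\overline u,\underline v)=(0,V)$, so that $\|u(\cdot,t)\|_\infty\to0$; its split into the two sign-cases of $b_1-\tfrac{\chi_2k}{d_3}$ reflects whether the term $-(b_1-\tfrac{\chi_2k}{d_3})u$ acts as damping (controlled through the upper envelope of $u$) or as a source in the $\underline v$-comparison.

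Finally, with $\|u(\cdot,t)\|_\infty\to0$ in hand, the $v$-equation becomes asymptotically an autonomous nonlocal logistic equation: the $u$- and $\int_\Omega u$-terms vanish and the residual $-\tfrac{\chi_2\lambda}{d_3}w$ combines with $-(b_2-\tfrac{\chi_2l}{d_3})v$ through the envelope to an effective $-b_2v-b_4|\Omega|v$. Hypothesis \eqref{Asymp-coexist-eq-02} keeps the self-limitation strictly positive in both the sup- and inf-comparison ODEs (the factor $2\tfrac{\chi_2}{d_3}l$ absorbs the gap between the upper and lower $w$-envelopes), so both comparison solutions converge to $V$, yielding $\|v(\cdot,t)-V\|_\infty\to0$; inserting $u\to0$ and $v\to V$ into the elliptic envelope then gives $w\to\tfrac{lb_0}{\lambda(b_2+|\Omega|b_4)}$, the last assertion. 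I expect the technical heart to be verifying that \eqref{Asymp-exclusion-eq-03} closes the bootstrap uniformly across both sign-cases of $b_1-\tfrac{\chi_2k}{d_3}$, while keeping the chemotaxis $w$-term and the two nonlocal integrals inside a single closed comparison system.
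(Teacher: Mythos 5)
Your overall framework is the right one and matches the paper's: pass to the limit quantities $\overline{u}=\limsup_t\max_\Omega u$, $\underline{v}=\liminf_t\min_\Omega v$, etc., rewrite the cross-diffusion via $\Delta w=(\lambda w-ku-lv)/d_3$, and compare against logistic ODEs; your observations that \eqref{Asymp-exclusion-eq-00} and the elliptic envelope turn the effective coefficient into $a_0-(a_2+a_4|\Omega|)\underline{v}$, and that \eqref{Asymp-coexist-eq-02} finishes the $v$-equation once $u\to 0$, are exactly Lemma~\ref{lem-asym-exclusion-01} and Step~2 of the paper's proof. The gap is in the middle: you reduce everything to the claim that \eqref{Asymp-exclusion-eq-03} makes your bootstrap ``contractive toward $(\overline{u},\underline{v})=(0,V)$,'' but you never verify this, and that verification is essentially the whole content of the theorem --- it is the only place the two-case hypothesis \eqref{Asymp-exclusion-eq-03} enters. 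As stated, your iteration is also underdetermined: the lower bound for $\underline{v}$ (the analogue of \eqref{Asymp-exclusion-eq-05} or \eqref{Asymp-exclusion-eq-07}) necessarily involves $\overline{v}$ as well as $\overline{u}$, and $\overline{v}$ in turn is bounded through \eqref{Asymp-exclusion-eq-04}/\eqref{Asymp-exclusion-eq-06} in terms of $\underline{v}$ and $\overline{u}$, so a two-variable contraction on $(\overline{u},\underline{v})$ does not close; you must eliminate $\overline{v}$ explicitly, and it is precisely this elimination that produces the specific products of coefficients appearing in \eqref{Asymp-exclusion-eq-03}. A further point your sketch elides is that the fixed-point/contractivity picture is delicate because for $\overline{u}>0$ one only gets the strict inequality $\underline{v}<a_0/(a_2+|\Omega|a_4)$ from \eqref{Asymp-exclusion-eq-000}, not $\underline{v}\ge V$, so one cannot simply ``wait until $v$ is near $V$'' to kill $u$.

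The paper closes this step not by iteration but by a single contradiction argument: assuming $\overline{u}>0$, it derives \eqref{Asymp-exclusion-eq-08}, substitutes the $\overline{v}$-bound \eqref{Asymp-exclusion-eq-04} into the $\underline{v}$-bound \eqref{Asymp-exclusion-eq-05} and uses \eqref{Asymp-exclusion-eq-000} again to arrive at a scalar inequality $B\underline{v}\ge A$ in which \eqref{Asymp-exclusion-eq-03} forces $A>0$, hence $B>0$ and $\underline{v}\ge A/B$; combining this with $\underline{v}<a_0/(a_2+|\Omega|a_4)$ and \eqref{Asymp-exclusion-eq-02} yields the contradiction. If you want to keep your iterative formulation you would have to prove that the composed affine map on $(\overline{u},\underline{v})$ obtained after eliminating $\overline{v}$ has spectral radius below one, which amounts to the same determinant-type computation; either way, the algebra in the two sign-cases of $b_1-\chi_2 k/d_3$ (Lemma~\ref{lem-asym-exclusion-02} versus Lemma~\ref{lem-asym-exclusion-03}) must actually be carried out, and your proposal stops short of it.
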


\begin{remark}
\begin{itemize}
\vspace{-0.1in}\item[(1)]  The condition {\bf (H1)} is needed in Theorem \ref{Asym-Th-2} only when $b_1\leq \frac{\chi_2k}{d_3}.$ Indeed the hypothesis {\bf (H1)} assumed in Theorem \ref{Asym-Th-2} can be replaced by any hypothesis on the parameters under which global existence of bounded classical solution holds.

\vspace{-0.1in}\item[(2)]In the case of system \eqref{u-v-w-eq1bis}, \eqref{Asymp-coexist-eq-02}, \eqref{Asymp-exclusion-eq-00},  \eqref{Asymp-exclusion-eq-01}, \eqref{Asymp-exclusion-eq-02} become respectively  $q_2<\frac{1}{2}$, $q_1\leq \tilde a_1$ and  $kq_1<1,$ and $1\leq \tilde a_2$ . Furthermore  \eqref{Asymp-exclusion-eq-03} become
\begin{equation*}\label{Asymp-exclusion-eq-03bis}
\begin{cases}
q_1k +(2-\tilde b_1)q_2+\tilde b_1 -2kq_1q_2<1,\,\text{if}  \quad \tilde b_1>kq_2\cr

q_1k +(2+k-\tilde b_1)q_2-2kq_1q_2<1,\,\text{if}  \quad \tilde b_1\leq kq_2.
\end{cases}
 \end{equation*}
Thus Theorem \ref{Asym-Th-2} is consistent with the exclusion result in \cite{STW13}.
\end{itemize}
\end{remark}

The rest of the paper is organized as follows. In section 1, we study the global existence of classical solutions and prove Theorem \ref{thm-global-001}. Section 2 is devoted to the study of the asymptotic behaviors of globally defined classical solutions. It is here that we present the proofs of Theorems \ref{Asym-Th-1} and \ref{Asym-Th-2}.

\section{Global Existence}
\label{S:Global}

In this section we study the global existence of  classical solutions to \eqref{u-v-w-eq1} and prove Theorem \ref{thm-global-001}. We start with the following important result on the local existence of classical solutions for any given  nonnegative bounded and uniformly continuous initials.

\begin{lemma}
\label{lm-local-001}
 For any given $u_0,v_0 \in C^0(\bar{\Omega})$ with $u_0 \geq 0$ and $v_0 \geq 0$,  there exists $T_{\max} \in (0,\infty]$  such that $\eqref{u-v-w-eq1}$  has a unique non-negative classical solution
$(u(x,t;u_0,v_0),v(x,t;u_0,v_0)$, $w(x,t;u_0,v_0))$ on $(0,T_{\max})$ satisfying that
$$\lim_{t\to 0^+}\|u(\cdot,t;u_0,v_0)-u_0(\cdot)\|_{C^0(\bar\Omega)}=0,\quad
\lim_{t\to
0^+}\|v(\cdot,t;u_0,v_0)-v_0(\cdot)\|_{C^0(\bar\Omega)}=0, $$
 and
moreover if $T_{\max}< \infty,$ then
\begin{equation}
\label{local-eq2}
\limsup_{t \nearrow T_{\max}}\left( \left\| u(\cdot,t;u_0,v_0) \right\|_{C^0(\bar \Omega)} +\left\| v(\cdot,t;u_0,v_0) \right\|_{C^0(\bar \Omega)}\right) =\infty.
\end{equation}
\end{lemma}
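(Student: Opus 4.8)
The plan is to eliminate $w$ via the elliptic third equation, recast the remaining system for $(u,v)$ as an abstract semilinear parabolic problem, and then apply the standard analytic-semigroup / contraction-mapping machinery (as in Henry or Friedman) to produce a unique local mild solution that upgrades to a classical one by parabolic regularity. First I would note that $A:=-d_3\Delta+\lambda$ with homogeneous Neumann boundary conditions is invertible on $L^p(\Omega)$ for $p\in(n,\infty)$, so that for each fixed $t$ one may write $w=A^{-1}(ku+lv)$. Elliptic regularity gives $\|w\|_{W^{2,p}}\le C\|ku+lv\|_{L^p}$, and since $p>n$ the embedding $W^{2,p}\hookrightarrow C^1(\bar\Omega)$ controls $\|\nabla w\|_\infty$ by $\|u\|_\infty+\|v\|_\infty$. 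Using $\Delta w=\frac{1}{d_3}(\lambda w-ku-lv)$, I would rewrite the chemotaxis term as $-\chi_1\nabla\cdot(u\nabla w)=-\chi_1\nabla w\cdot\nabla u-\frac{\chi_1}{d_3}u(\lambda w-ku-lv)$ (and similarly for $v$), so that the $(u,v)$-system takes the drift form
\[
\begin{cases}
u_t=d_1\Delta u-\chi_1\nabla w\cdot\nabla u+u\,g_1(u,v,w),\\
v_t=d_2\Delta v-\chi_2\nabla w\cdot\nabla v+v\,g_2(u,v,w),
\end{cases}
\]
with $w=A^{-1}(ku+lv)$ and $g_1,g_2$ depending on $u,v,w$ and the bounded linear nonlocal functionals $\int_\Omega u,\int_\Omega v$.

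Next I would set $\mathcal A:=\mathrm{diag}(-d_1\Delta+1,-d_2\Delta+1)$ on $X:=L^p(\Omega)\times L^p(\Omega)$ with Neumann domain, which is sectorial, and choose a fractional power $\alpha\in(\tfrac12+\tfrac{n}{2p},1)$ so that $X^\alpha\hookrightarrow C^1(\bar\Omega)\times C^1(\bar\Omega)$, in particular $X^\alpha\hookrightarrow W^{1,\infty}$. The key estimate is that the nonlinearity $F(u,v)$—collecting the drift terms $\chi_i\nabla w\cdot\nabla(\cdot)$, the reaction terms, and the shift—is locally Lipschitz from $X^\alpha$ to $X$; the nonlocal terms are harmless since $\int_\Omega$ is a bounded linear functional, and the gradient factors are controlled because $\alpha>\tfrac12+\tfrac{n}{2p}$ makes $X^\alpha$ embed into $W^{1,\infty}$ while elliptic regularity passes a derivative onto $w$ at no cost to $(u,v)$. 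A Banach fixed-point argument on the Duhamel formula in $C([0,T];X^\alpha)$ then yields a unique mild solution on a maximal interval $[0,T_{\max})$, and Schauder/bootstrap regularity upgrades it to a classical solution attaining the data continuously, giving the stated limits as $t\to0^+$.

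Nonnegativity follows from the drift form above: each equation is linear in its principal and transport parts with reaction written as $u\cdot g_1$ (resp. $v\cdot g_2$), so along the solution $u$ solves a linear parabolic equation with bounded coefficients for which the zero function is a subsolution; the parabolic maximum principle then gives $u,v\ge0$ for nonnegative data. The blow-up alternative \eqref{local-eq2} is the standard extensibility criterion: if $\|u(\cdot,t)\|_\infty+\|v(\cdot,t)\|_\infty$ stayed bounded as $t\nearrow T_{\max}<\infty$, then elliptic regularity would bound $\nabla w$, hence $F$, uniformly on $X^\alpha$, so the fixed-point construction could be restarted from a time just below $T_{\max}$ and extend the solution past $T_{\max}$, a contradiction.

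The main obstacle is the chemotaxis drift $\chi_i\nabla w\cdot\nabla u$, which is genuinely first order in the unknown: the whole scheme hinges on choosing $\alpha$ (equivalently $p>n$) large enough that $X^\alpha\hookrightarrow W^{1,\infty}$ while still keeping $F:X^\alpha\to X$ locally Lipschitz, together with the fact that the elliptic coupling lets $\nabla w$ be estimated by $\|(u,v)\|_\infty$ without spending a derivative on $(u,v)$. Everything else—the logistic reaction and the nonlocal mass terms—is lower order and does not affect the local theory.
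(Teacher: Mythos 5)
Your proposal is correct and is essentially the argument the paper itself invokes: the paper gives no proof of Lemma \ref{lm-local-001} beyond citing ``standard arguments from fixed point theory or semigroup theory and regularity arguments'' (as in Stinner--Tello--Winkler and Issa--Shen), and that standard argument is precisely the elliptic elimination of $w$, the rewriting of the chemotaxis term via $\Delta w=\frac{1}{d_3}(\lambda w-ku-lv)$, the sectorial-operator/contraction scheme, maximum-principle nonnegativity, and the extensibility criterion you describe. The only small wrinkle is that with data merely in $C^0(\bar\Omega)$ the contraction should be run in $C([0,T];C^0(\bar\Omega)\times C^0(\bar\Omega))$ (or in a time-weighted $X^\alpha$-space) using the $L^p$--$L^\infty$ smoothing estimates of the Neumann heat semigroup, rather than in $C([0,T];X^\alpha)$ as written, since $e^{-t\mathcal{A}}(u_0,v_0)$ need not be continuous at $t=0$ in the $X^\alpha$-norm for such data.
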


The proof of Lemma \ref{local-eq2} follows from standard arguments from fixed point theory or semigroup theory and regularity arguments (see for example \cite[proof of  Lemma 2.1]{STW13} and [proof of Theorem 1.1 ]\cite{ITBWS16}).

Our approach to prove our main result   on the existence of classical solutions which are globally defined in time Theorem \ref{thm-global-001} is as follows. For Theorem \ref{thm-global-001}(2) and Theorem \ref{thm-global-001}(3),  we use the celebrate method  of  $L^p$ estimates and Gagliardo-Nirengerg's Inequality. While for Theorem \ref{thm-global-001}(1), we use the rectangles method which relies on the dynamics of the following system of ODE's induced by system \eqref{u-v-w-eq1}.

{\large
\begin{equation}
\label{ode00}
\begin{cases}
\overline{u}'=\frac{\chi_1}{d_3} \overline{u}\big(k \overline {u}+l\overline v-k\underline{u}-l\underline{v}\big)+ \overline{u}\big[a_0-\big(a_1-|\Omega|(a_3)_-\big)\overline u-
|\Omega|(a_3)_+ \underline{u}\big]\\
\qquad +\overline{u}\big[\big((a_2)_-+|\Omega|(a_4)_-\big)\overline{ v}-\big((a_2)_++|\Omega|(a_4)_+\big)\underline{ v}\big]\\
\\
\underline{u}'=\frac{\chi_1}{d_3} \underline{u}\big(k\underline {u}+l\underline {v}-k\overline{u}-l\overline{v}\big)+ \underline{u}\big[a_0-(a_1-|\Omega|(a_3)_-)\underline u-|\Omega|(a_3)_+ \overline{u}\big]\\
\qquad +\underline{u}\big[\big((a_2)_-+|\Omega|(a_4)_-\big)\underline{ v}-\big((a_2)_++|\Omega|(a_4)_+\big)\overline{ v}\big]\\

\\
\overline{v}'=\frac{\chi_2}{d_3} \overline{v}\big(k\overline {u}+l\overline {v}-k\underline{u}-l\underline{v}\big)+ \overline{v}\big[b_0-\big(b_2-|\Omega|(b_4)_-\big)\overline{v}-
|\Omega|(b_4)_+ \underline{v}\big]\\
\qquad +\overline{v}\big[\big((b_1)_-+|\Omega|(b_3)_-\big)\overline{ u}-\big((b_1)_++|\Omega|(b_3)_+\big)\underline{ u}\big]\\
\\

\underline{v}'=\frac{\chi_2}{d_3} \underline{v}\big(k\underline {u}+l\underline {v}-k\overline{u}-l\overline{v}\big)+ \underline{v}\big[b_0-(b_2-|\Omega|(b_4)_-)\underline{v}-
|\Omega|(b_4)_+ \overline{v}\big]\\
\qquad +\underline{v}\big[\big((b_1)_-+|\Omega|(b_3)_-\big)\underline{ u}-\big((b_1)_++|\Omega|(b_3)_+\big)\overline{ u}\big].\\
\end{cases}
\end{equation}
}
Note that for that for every nonnegative real numbers $(\overline{u}_0,\underline{u}_{0},\overline{v}_0,\underline{v}_0)\in \R^{4}_{+}$, system \eqref{ode00} has a unique nonnegative classical solution $(\overline{u}(\cdot),\underline{u}(\cdot),\overline{v}(\cdot),\underline{v}(\cdot))$ with $ (\overline{u}(0),\underline{u}(0),\overline{v}(0),\underline{v}(0))=(\overline{u}_0,\underline{u}_{0},\overline{v}_0,\underline{v}_0)$ defined on a maximal interval $[0\ ,\ T_{max}(\overline{u}_0,\underline{u}_{0},\overline{v}_0,\underline{v}_0))$.

 Furthermore, if $ T_{max}(\overline{u}_0,\underline{u}_{0},\overline{v}_0,\underline{v}_0))<\infty$, then
\begin{equation}\label{crit-for-Tmax}
\lim_{t\to T_{max}(\underline{u}_{0},\overline{u}_0,\underline{v}_0,\overline{v}_0)_{-}}\Big(|\overline{u}(t)|+|\underline{u}(t)|+|\overline{v}(t)|+|\underline{v}(t)| \Big)=\infty.
\end{equation}
For given $\varphi_0\in C^0(\bar\Omega)$ with $\varphi_0(x)\ge 0,$  we let
$\overline{\varphi}_0=\max_{x \in \bar{\Omega}} \varphi_0(x)$ and
$\underline{\varphi}_0=\min_{x \in \bar{\Omega}} \varphi_0(x)$.



We start by the following two Lemmas which provide a sufficient condition for solutions of system \eqref{ode00} to be defined for all time.

\begin{lemma}
\label{lem-0000}
Let $A_1>(B_1)_{+}$ and $B_2>(A_2)_{+}$ be given real numbers. Let $(\overline{u}(t),\overline{v}(t))$ be positive continuously differentiable function on $[0, T_{\max})$, $T_{max}\in(0\ ,\ \infty]$,  and satisfying the system of differential inequalities
\begin{equation}
\label{ode01}
\begin{cases}
\overline{u}_t\leq \overline{u}\left(a_0-A_1\overline{u}+A_2\overline{v}\right)\cr
\overline{v}_t\leq \overline{v}\left(b_0+B_1\overline{u}-B_2\overline{v}\right).
\end{cases}
\end{equation}
Then the function $(\overline{u}(t),\overline{v}(t))$ satisfies
\[0\leq\overline{u}(t) \leq \max\left\{\overline{u}_0,\frac{a_0+\sqrt{a_0^2+4A_1(A_2)_{+}M}}{2A_1} \right\}, \quad \forall\ 0\leq t< T_{max},\]
and
\[0\leq\overline{v}(t) \leq \max\left\{\overline{v}_0,\frac{b_0+\sqrt{b_0^2+4(B_1)_{+}B_2M}}{2B_1}\right\}, \quad \forall\ 0\leq t< T_{max}, \]
where
\begin{equation}\label{M-eq}
M= \max\left\{\overline{u}_0\overline{v}_0, \frac{(a_0+b_0)^2}{4\min\{(A_1-B_1)^2,(B_2-A_2)^2\}} \right\}.
\end{equation}
\end{lemma}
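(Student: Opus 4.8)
The plan is to bound the product $z:=\overline u\,\overline v$ first and then feed that bound back into each scalar inequality. Since $\overline u,\overline v>0$, I multiply the first inequality of \eqref{ode01} by $\overline v$ and the second by $\overline u$ and add, obtaining
\[
z'=\overline u'\,\overline v+\overline u\,\overline v'\le z\big[(a_0+b_0)-(A_1-B_1)\overline u-(B_2-A_2)\overline v\big].
\]
The hypotheses $A_1>(B_1)_+\ge B_1$ and $B_2>(A_2)_+\ge A_2$ guarantee $A_1-B_1>0$ and $B_2-A_2>0$ (and $A_1,B_2>0$). Setting $c:=\min\{A_1-B_1,\,B_2-A_2\}>0$ and using $\overline u+\overline v\ge 2\sqrt{\overline u\,\overline v}$ gives
\[
z'\le z\big[(a_0+b_0)-2c\sqrt z\,\big].
\]
Observe that $c^2=\min\{(A_1-B_1)^2,(B_2-A_2)^2\}$, which is precisely the quantity appearing in the definition \eqref{M-eq} of $M$.

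Next I would establish the invariance bound $z(t)\le M$ on $[0,T_{max})$, where $M=\max\{\,\overline u_0\overline v_0,\,(a_0+b_0)^2/(4c^2)\,\}$. This is the crux of the argument. I would use a first-exit-time/barrier argument: for fixed $\varepsilon>0$, if $z$ first attains the value $M+\varepsilon$ at some $t_1>0$, then necessarily $z'(t_1)\ge 0$; but $z(t_1)=M+\varepsilon>(a_0+b_0)^2/(4c^2)$ forces $(a_0+b_0)-2c\sqrt{z(t_1)}<0$, whence $z'(t_1)<0$, a contradiction. Letting $\varepsilon\downarrow 0$ yields $z\le M$. (Equivalently, one may invoke a scalar comparison theorem against $\tilde z'=\tilde z[(a_0+b_0)-2c\sqrt{\tilde z}\,]$.)

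Finally I would use $\overline u\,\overline v\le M$ to decouple the two estimates. In the first inequality, bound $A_2\overline v\le (A_2)_+\overline v$, so that $A_2\,\overline u\,\overline v\le (A_2)_+M$ and
\[
\overline u'\le a_0\overline u-A_1\overline u^2+(A_2)_+M=:g(\overline u).
\]
The downward parabola $g$ has the unique positive root $u_*=\frac{a_0+\sqrt{a_0^2+4A_1(A_2)_+M}}{2A_1}$ and satisfies $g(s)<0$ for $s>u_*$; the same barrier argument then gives $\overline u(t)\le\max\{\overline u_0,u_*\}$, the claimed bound. The estimate for $\overline v$ follows symmetrically from $B_1\overline u\le (B_1)_+\overline u$, which leads to $\overline v'\le b_0\overline v-B_2\overline v^2+(B_1)_+M$, whose relevant root is $\frac{b_0+\sqrt{b_0^2+4B_2(B_1)_+M}}{2B_2}$ (the stated denominator $2B_1$ appears to be a typo for $2B_2$).

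I expect the only genuinely delicate point to be the invariance step in the second paragraph: one must check that the differential \emph{inequality} (not equality), together with the bare hypotheses that $(\overline u,\overline v)$ is positive and $C^1$, is enough to run the first-exit argument cleanly on the maximal interval. Everything else reduces to elementary algebra on a quadratic and the sign bookkeeping $(A_2)_+\ge A_2$, $(B_1)_+\ge B_1$.
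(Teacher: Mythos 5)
Your proposal is correct and follows essentially the same route as the paper: bound the product $\overline{u}\,\overline{v}$ by $M$ via the summed inequality and AM--GM (the paper phrases this through $\ln(\overline{u}\,\overline{v})$ and an ODE comparison, you through a first-exit barrier for $z=\overline{u}\,\overline{v}$, which is the same argument), then decouple each scalar inequality using $(A_2)_+M$ and $(B_1)_+M$ and compare against the positive root of the resulting quadratic. Your observation that the denominator $2B_1$ in the stated bound for $\overline{v}$ should be $2B_2$ is also consistent with the final display in the paper's own proof.
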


\begin{proof}
We have from   \eqref{ode01} that
\[
\begin{cases}
\frac{\overline{u}_t}{\overline{u}}\leq a_0-A_1\overline{u}+A_2\overline{v}\cr
\frac{\overline{v}_t}{\overline{v}}\leq b_0+B_1\overline{u}-B_2\overline{v}.
\end{cases}
\]
Thus supposing $A_1>B_1$ and $B_2>A_2,$  we get
\begin{align*}
\frac{\overline{u}_t}{\overline{u}}+\frac{\overline{v}_t}{\overline{v}}&\leq a_0+b_0-(A_1-B_1)\overline{u}-(B_2-A_2)\overline{v}\nonumber\\
&\leq a_0+b_0- \min\{A_1-B_1,B_2-A_2\}(\overline{u}+\overline{v})\nonumber\\
&\leq  a_0+b_0-2 \min\{A_1-B_1,B_2-A_2\}(\sqrt{\overline{u}\, \overline{v}}).
\end{align*}
Thus, it follows from the last inequality that
\begin{align*}
\Big(\ln{(\overline{u}\, \overline{v})}\Big)'&\leq  a_0+b_0-2
\min\{A_1-B_1,B_2-A_2\}e^{\frac{1}{2}\ln{(\overline{u}\,\overline{v})}}.
\end{align*}
Therefore by comparison principle for ODEs, we get
\[\ln(\overline{u}\, \overline{v})\leq \max\{\ln{(\overline{u}_0\overline{v}_0)},2\ln\{\frac{a_0+b_0}{2 \min\{A_1-B_1,B_2-A_2\}}\}\}.\]
Hence  $0<\overline{u}\, \overline{v}\leq M$, where $M$ is given by \eqref{M-eq}. Combining this with \eqref{ode01} we get
\[
\begin{cases}
\overline{u}_t\leq a_0\overline{u}-A_1\overline{u}^2+(A_2)_{+}M\cr
\overline{v}_t\leq b_0\overline{v}-B_2\overline{v}^2+(B_1)_{+}M.
\end{cases}
\]
Therefore, again by comparison principle for ODEs, we get
\[0<\overline{u}(t) \leq \max\{\overline{u}_0,\frac{a_0+\sqrt{a_0^2+4A_1(A_2)_{+}M}}{2A_1} \}, \]
and
\[0<\overline{v}(t) \leq \max\{\overline{v}_0,\frac{b_0+\sqrt{b_0^2+4(B_1)_{+}B_2M}}{2B_2} \}. \]
The lemma thus follows.
\end{proof}
\begin{lemma}
\label{lem-0001}
Let $(\overline{u}(t),\underline{u}(t),\overline{v}(t),\underline{v}(t))$ be the solution of \eqref{ode00} with initial condition  $(\overline{u}(0),\underline{u}(0)$, $\overline{v}(0),\underline{v}(0))=(\overline{u}_0,\underline{u}_0,\overline{v}_0,\underline{v}_0)\in\R^4_+$
in $(0, T_{\max}(\overline{u}_0,\underline{u}_0,\overline{v}_0,\underline{v}_0))$. If $\overline{u}_0\geq \underline{u}_0$ and $\overline{v}_0\geq\underline{v}_0$, then we have that
$$
0\leq \underline{u}(t) \leq \overline{u}(t) \quad \text{and}\quad 0\leq \underline{v}(t) \leq \overline{v}(t) \quad  \forall t : 0\leq t < T_{\max}(\overline{u}_0,\underline{u}_0,\overline{v}_0,\underline{v}_0).
$$
If in addition, {\bf (H1)} holds,  then   $T_{max}(\overline{u}_0,\underline{u}_0,\overline{v}_0,\underline{v}_0)=\infty$ and we have
\[0\leq \underline{u}(t) \leq \overline{u}(t) \leq \max\left\{\overline{u}_0,\frac{a_0+\sqrt{a_0^2+4\big(a_1-k\frac{\chi_1}{d_3}-|\Omega|(a_3)_-\big)\big((a_2)_-+|\Omega|(a_4)_-+l\frac{\chi_1}{d_3}\big)M}}{2\big(a_1-k\frac{\chi_1}{d_3}-|\Omega|(a_3)_-\big)} \right\},\]
and
\[0\leq \underline{v}(t) \leq \overline{v}(t)\leq  \max\left\{\overline{v}_0,\frac{b_0+\sqrt{b_0^2+4\big(b_2-l\frac{\chi_2}{d_3}-|\Omega|(b_4)_-\big)\big((b_1)_-+|\Omega|(b_3)_-+k\frac{\chi_2}{d_3}\big)M}}{2\big(b_2-l\frac{\chi_2}{d_3}-|\Omega|(b_4)_-\big)} \right\}, \]
with
\begin{equation}\label{M1}
M=\max\left\{\overline{u}_0\overline{v}_0,\,\, \frac{(a_0+b_0)^2}{4 L^2}\right\} ,
\end{equation}
where
\begin{align*}
L=\min\Big\{&a_1-k\frac{\chi_1+\chi_2}{d_3}-|\Omega|((a_3)_-+|(b_3)_-)-(b_1)_-,\\
&b_2-l\frac{\chi_2+\chi_1}{d_3}-|\Omega|((b_4)_-+(a_4)_-)-(a_2)_-\Big\}.
\end{align*}
\end{lemma}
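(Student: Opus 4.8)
The plan is to first pin down the sign and ordering relations $0\le\underline u\le\overline u$ and $0\le\underline v\le\overline v$ directly from the algebraic structure of \eqref{ode00}, and only then feed these into the scalar comparison machinery of Lemma \ref{lem-0000}. Nonnegativity comes essentially for free: each of the four equations in \eqref{ode00} has the form $x'=x\,G(\cdot)$ with $G$ continuous along the trajectory, so $x(t)=x(0)\exp\!\big(\int_0^t G\big)$ and hence $\overline u,\underline u,\overline v,\underline v$ stay $\ge 0$ on the maximal interval whenever they start there.

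For the ordering I would set $p=\overline u-\underline u$ and $q=\overline v-\underline v$ and compute their equations. Writing the first two lines of \eqref{ode00} as $\overline u'=\overline u\,F(\overline u,\underline u,\overline v,\underline v)$ and $\underline u'=\underline u\,F(\underline u,\overline u,\underline v,\overline v)$ with the \emph{same} function $F$ (the system is built to be symmetric under interchanging the over- and under-bars), one gets $p'=\overline u\,(F_1-F_2)+p\,F_2$. The decisive computation is that the mixed competitive terms collapse via $(a_2)_-+(a_2)_+=|a_2|$ and $|\Omega|(a_4)_-+|\Omega|(a_4)_+=|\Omega||a_4|$, so that $F_1-F_2$ is linear in $p,q$ with the coefficient of $q$ equal to $\frac{2l\chi_1}{d_3}+|a_2|+|\Omega||a_4|\ge 0$. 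Thus $p'=\alpha(t)p+\beta(t)q$ with $\beta(t)=\overline u\big(\frac{2l\chi_1}{d_3}+|a_2|+|\Omega||a_4|\big)\ge 0$, and symmetrically $q'=\gamma(t)p+\delta(t)q$ with $\gamma(t)=\overline v\big(\frac{2k\chi_2}{d_3}+|b_1|+|\Omega||b_3|\big)\ge 0$. This $(p,q)$ system is cooperative (nonnegative off-diagonal coefficients), so by the comparison principle for quasimonotone linear systems the nonnegative quadrant is invariant; since $p(0),q(0)\ge 0$ we conclude $\underline u\le\overline u$ and $\underline v\le\overline v$ throughout. This ordering step is where the real work sits and is the main obstacle: everything downstream is bookkeeping.

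With the ordering in hand I would discard the favourably-signed terms in the $\overline u$- and $\overline v$-equations. Using $\underline u,\underline v\ge 0$ to drop $-k\underline u$, $-l\underline v$, $-|\Omega|(a_3)_+\underline u$ and $-((a_2)_++|\Omega|(a_4)_+)\underline v$, the $\overline u$-line of \eqref{ode00} yields $\overline u'\le\overline u(a_0-A_1\overline u+A_2\overline v)$ with $A_1=a_1-|\Omega|(a_3)_--\frac{\chi_1 k}{d_3}$ and $A_2=(a_2)_-+|\Omega|(a_4)_-+\frac{\chi_1 l}{d_3}$, and symmetrically $\overline v'\le\overline v(b_0+B_1\overline u-B_2\overline v)$ with $B_1=(b_1)_-+|\Omega|(b_3)_-+\frac{\chi_2 k}{d_3}$ and $B_2=b_2-|\Omega|(b_4)_--\frac{\chi_2 l}{d_3}$. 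A short check shows that the two requirements $A_1>(B_1)_+$ and $B_2>(A_2)_+$ are exactly the two lines of {\bf (H1)} (here $A_2,B_1\ge 0$, so the positive parts are harmless), and that $A_1-B_1$ and $B_2-A_2$ reproduce the two entries of $L$, while $A_1$, $(A_2)_+$ match the constants displayed in the statement.

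Finally I would invoke Lemma \ref{lem-0000} with these $A_1,A_2,B_1,B_2$: it gives precisely the asserted upper bounds on $\overline u(t)$ and $\overline v(t)$, with $M$ as in \eqref{M1} since $\min\{(A_1-B_1)^2,(B_2-A_2)^2\}=L^2$. Combined with $0\le\underline u\le\overline u$ and $0\le\underline v\le\overline v$, all four components remain bounded on $[0,T_{\max})$, so the blow-up alternative \eqref{crit-for-Tmax} forces $T_{\max}=\infty$, which completes the proof.
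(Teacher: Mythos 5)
Your proposal is correct, and its second half (dropping the favourably-signed terms using $\underline u,\underline v\ge 0$, identifying $A_1,A_2,B_1,B_2$, checking that {\bf (H1)} is exactly $A_1>(B_1)_+$ and $B_2>(A_2)_+$ with $\min\{A_1-B_1,B_2-A_2\}=L$, and then invoking Lemma \ref{lem-0000} plus the blow-up criterion \eqref{crit-for-Tmax}) coincides with what the paper does. Where you genuinely diverge is the ordering step. The paper argues by contradiction at a first touching time $\bar t$: it reduces to three cases, rules out simultaneous equality by uniqueness/symmetry of \eqref{ode00}, and in the remaining cases computes $(\overline u-\underline u)'(\bar t)=\overline u(\bar t)\bigl[\tfrac{2l\chi_1}{d_3}+|a_2|+|\Omega||a_4|\bigr](\overline v(\bar t)-\underline v(\bar t))>0$, which forces it to assume strict positivity and strict ordering of the initial data WLOG and recover the general case by continuous dependence. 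You instead write the differences $p=\overline u-\underline u$, $q=\overline v-\underline v$ as a non-autonomous linear system $p'=\alpha p+\beta q$, $q'=\gamma p+\delta q$ whose off-diagonal coefficients $\beta=\overline u\bigl(\tfrac{2l\chi_1}{d_3}+|a_2|+|\Omega||a_4|\bigr)$ and $\gamma=\overline v\bigl(\tfrac{2k\chi_2}{d_3}+|b_1|+|\Omega||b_3|\bigr)$ are nonnegative once nonnegativity of the four components is known, and you conclude by invariance of the nonnegative quadrant for cooperative (quasimonotone) linear systems. The underlying cancellation $(a_2)_++(a_2)_-=|a_2|$, etc., is the same in both arguments; your packaging avoids the case analysis and the WLOG/continuity step, at the cost of citing the Kamke--M\"uller type comparison principle for quasimonotone systems rather than only scalar ODE comparison. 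Both routes are sound.
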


\begin{proof}
Without Loss of Generality we may suppose that $0<
\underline{u}_0<\overline{u}_0 $ and $0<
\underline{v}_0<\overline{v}_0,$ since the result in the general case
follows from continuity of solutions with respect to initial
conditions. Suppose by contradiction that the result of Lemma
\ref{lem-0001} does not hold. Then there exists $\bar{t} \in (0\ ,\
T_{\max}(\overline{u}_0,\underline{u}_0,\overline{v}_0,\underline{v}_0))$ such that
$$
0< \underline{u}(t) < \overline{u}(t) \quad \text{and} \quad 0< \underline{v}(t) < \overline{v}(t) \quad \text{for all } \, 0<t<\bar{t},
$$
and either

{ \bf Case I.} $
\underline{u}(\bar{t}) =\overline{u}(\bar{t}) \quad \text{and} \quad \underline{v}(\bar{t}) < \overline{v}(\bar{t});
$

or

{ \bf Case II.}
$
\quad \underline{u}(\bar{t}) < \overline{u}(\bar{t}) \quad \text{and} \quad  \underline{v}(\bar{t}) =\overline{v}(\bar{t});
$

or

{ \bf Case III.}
$
\underline{u}(\bar{t}) =\overline{u}(\bar{t}) \quad \text{and} \quad \underline{v}(\bar{t}) = \overline{v}(\bar{t}).
$

\medskip

\noindent{\bf Case III.} It cannot happen, for otherwise, by uniqueness of solutions to system \eqref{ode00} we would have $(\overline{u}(t),\underline{u}(t),\overline{v}(t),\underline{v}(t))=(\underline{u}(t),\overline{u}(t),\underline{v}(t),\overline{v}(t))$ for all $t\in(0 ,T_{\max}(\overline{u}_0,\underline{u}_0,\overline{v}_0,\underline{v}_0))$, which contradicts the fact that $
0\leq \underline{u}(t) < \overline{u}(t) \quad \text{and} \quad 0\leq \underline{v}(t) < \overline{v}(t) \quad \text{for all } \, 0<t<\bar{t}.$

\medskip

\noindent{\bf Case I.} Suppose that $
\underline{u}(\bar{t}) =\overline{u}(\bar{t}) \quad \text{and} \quad \underline{v}(\bar{t}) < \overline{v}(\bar{t}).
$
Then $(\overline{u}-\underline{u})'(\bar{t})\leq 0$ and from the first two equation of system \eqref{ode00} at $\bar{t}$ , we get
$$
(\overline{u}-\underline{u})'(\bar{t})=\overline{u}(\bar{t})\left[2\frac{l\chi_1}{d_3}+|a_2|+|\Omega||a_4|\right](\overline{v}(\bar{t})-\underline{v}(\bar{t})).
$$
Since $0<\overline{u}(\bar{t})$ and $0<(\overline{v}(\bar{t})-\underline{v}(\bar{t})),$ we get  $(\overline{u}-\underline{u})'(\bar{t})>0$, which is a contradiction.

\medskip
\noindent{ \bf Case II.}  A similar argument as in  { \bf Case I.}  implies that {\bf Case II.} cannot happen.

From the first and third equations of system \eqref{ode00}  we get

{\large

\begin{equation*}
\begin{cases}
\overline{u}'\leq \overline{u}\big[a_0-\underbrace{\big(a_1-k\frac{\chi_1}{d_3}-|\Omega|(a_3)_-\big)}_{A_1}\overline u+\underbrace{\big((a_2)_-+|\Omega|(a_4)_-+l\frac{\chi_1}{d_3}\big)}_{A_2}\overline{ v}\big]\\

\overline{v}'\leq \overline{v}\big[b_0+\underbrace{\big((b_1)_-+|\Omega|(b_3)_-+k\frac{\chi_2}{d_3}\big)}_{B_1}\overline{ u}-\underbrace{\big(b_2-l\frac{\chi_2}{d_3}-|\Omega|(b_4)_-\big)}_{B_2}\overline{v}\big].\\

\end{cases}
\end{equation*}
}
Then  Lemma \ref{lem-0000} and condition {\bf (H1)} give
\[0\leq \underline{u}(t) \leq \overline{u}(t) \leq \max\left\{\overline{u}_0,\frac{a_0+\sqrt{a_0^2+4\big(a_1-k\frac{\chi_1}{d_3}-|\Omega|(a_3)_-\big)\big((a_2)_-+|\Omega|(a_4)_-+l\frac{\chi_1}{d_3}\big)M}}{2\big(a_1-k\frac{\chi_1}{d_3}-|\Omega|(a_3)_-\big)} \right\}, \]
and
\[0\leq \underline{v}(t) \leq \overline{v}(t)\leq  \max\left\{\overline{v}_0,\frac{b_0+\sqrt{b_0^2+4\big(b_2-l\frac{\chi_2}{d_3}-|\Omega|(b_4)_-\big)\big((b_1)_-+|\Omega|(b_3)_-+k\frac{\chi_2}{d_3}\big)M}}{2\big(b_2-l\frac{\chi_2}{d_3}-|\Omega|(b_4)_-\big)} \right\}, \]
where $M$ is given by \eqref{M1}. Thus, we must have that $T_{max}(\overline{u}_0,\underline{u}_0,\overline{v}_0,\underline{v}_0)=\infty$ and Lemma \ref{lem-0001} thus follows.
\end{proof}

The next two Lemmas give  a uniform $L^{1}$-bound for the solutions of \eqref{u-v-w-eq1} under hypotheses {\bf (H2)} and {\bf (H3)}, respectively.

\begin{lemma}
\label{lem-0002}
Suppose that we have the local competitive case, that is,  $a_2\geq 0$ and $b_1\geq 0$ (thus $(a_2)_-=(b_1)_-=0$) and suppose {\bf (H2)} holds.

Let $(u,v,w)$ be the solution of system \eqref{u-v-w-eq1} with initial
condition  $(u(0),v(0))=(u_0,v_0)$ in $(0,T_{\max}).$ Then, for every $0<t<T_{max}$, there holds
\[0\leq\int_{\Omega}u(t) \leq M_0:=\max\left\{\int_{\Omega}u_0,\frac{a_0+\sqrt{a_0^2+4\left(\frac{{a_1-|\Omega|(a_3)_-}}{|\Omega|}\right)(a_4)_-M}}{2\left(\frac{{a_1-|\Omega|(a_3)_-}}{|\Omega|}\right)} \right\}, \]
and
\[0\leq\int_{\Omega}v(t)\leq M_1:=\max\left\{\int_{\Omega}v_0,\,\, \frac{b_0+\sqrt{b_0^2+4\left(\frac{ b_2-|\Omega|(b_4)_-}{|\Omega|}\right)(b_3)_-M}}{2\left(\frac{ b_2-|\Omega|(b_3)_-}{|\Omega|}\right)} \right\}, \]
where
\begin{equation}\label{M2}
M=\max\left\{ \|u_0\|_{1}\|v_0\|_{1}\ ,\ \frac{(a_0+b_0)^2|\Omega|^2}{4\min\{ (a_1-|\Omega|((a_3)_- +(b_3)_-))^2 , (b_2-|\Omega|((b_4)_- +(a_4)_-))^2\}}\right\}.
\end{equation}

\end{lemma}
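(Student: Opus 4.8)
The plan is to reduce the problem to the planar differential inequality already settled in Lemma \ref{lem-0000}, by integrating the first two equations of \eqref{u-v-w-eq1} over $\Omega$. Writing $U(t)=\int_\Omega u(\cdot,t)$ and $V(t)=\int_\Omega v(\cdot,t)$, I would first integrate the $u$- and $v$-equations in space. The homogeneous Neumann boundary conditions make both the diffusion and the chemotaxis contributions vanish, since $\int_\Omega \Delta u = \int_{\partial\Omega}\partial_n u = 0$ and $\int_\Omega \nabla\cdot(u\nabla w)=\int_{\partial\Omega} u\,\partial_n w = 0$. This leaves
$$U' = a_0 U - a_1\int_\Omega u^2 - a_2\int_\Omega uv - a_3 U^2 - a_4 UV,$$
together with the analogous identity for $V'$.

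The next step is to turn these identities into the sign-definite form required by Lemma \ref{lem-0000}. Jensen's inequality gives $\int_\Omega u^2 \ge U^2/|\Omega|$, so $-a_1\int_\Omega u^2 \le -\tfrac{a_1}{|\Omega|}U^2$; since $a_2\ge 0$ and $u,v\ge 0$ in the local competitive case, the mixed term $-a_2\int_\Omega uv$ is nonpositive and may be discarded; and the signed nonlocal terms satisfy $-a_3U^2\le (a_3)_-U^2$ and $-a_4UV\le (a_4)_-UV$. Collecting these estimates (and using $b_1\ge 0$ to drop the corresponding mixed term in the $V$-equation) yields
$$U' \le U\Big(a_0 - A_1 U + A_2 V\Big),\qquad V' \le V\Big(b_0 + B_1 U - B_2 V\Big),$$
with $A_1=\tfrac{a_1-|\Omega|(a_3)_-}{|\Omega|}$, $A_2=(a_4)_-$, $B_1=(b_3)_-$, and $B_2=\tfrac{b_2-|\Omega|(b_4)_-}{|\Omega|}$.

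It then remains to verify the hypotheses of Lemma \ref{lem-0000}, namely $A_1>(B_1)_+$ and $B_2>(A_2)_+$. Because $B_1=(b_3)_-\ge 0$ and $A_2=(a_4)_-\ge 0$, these read $a_1-|\Omega|(a_3)_- > |\Omega|(b_3)_-$ and $b_2-|\Omega|(b_4)_- > |\Omega|(a_4)_-$, which are exactly the two lines of {\bf (H2)}. Applying Lemma \ref{lem-0000} produces the claimed bounds on $U$ and $V$; a short computation shows $A_1-B_1 = \tfrac{a_1-|\Omega|((a_3)_-+(b_3)_-)}{|\Omega|}$ and $B_2-A_2=\tfrac{b_2-|\Omega|((b_4)_-+(a_4)_-)}{|\Omega|}$, so that the quantity $M$ from \eqref{M-eq} coincides with \eqref{M2} once the factor $|\Omega|^2$ is accounted for, and $M_0,M_1$ match the statement after noting $U_0V_0=\|u_0\|_1\|v_0\|_1$.

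The only genuinely delicate point is the bookkeeping in the second paragraph: one must check that every dropped or re-signed term has the right sign, so that the coefficients $A_1,A_2,B_1,B_2$ emerging from the integration are precisely those for which {\bf (H2)} is equivalent to the structural hypotheses $A_1>(B_1)_+$, $B_2>(A_2)_+$ of Lemma \ref{lem-0000}. Everything beyond that is a routine invocation of the already-established planar comparison lemma.
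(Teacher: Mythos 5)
Your proposal is correct and follows essentially the same route as the paper's proof: integrate the first two equations over $\Omega$, use H\"older's (Jensen's) inequality together with the signs $a_2\ge 0$, $b_1\ge 0$ to reduce to the planar differential inequality of Lemma \ref{lem-0000}, and check that {\bf (H2)} is exactly the hypothesis $A_1>(B_1)_+$, $B_2>(A_2)_+$ of that lemma. Your bookkeeping of the coefficients and of the constant $M$ matches the paper's.
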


\begin{proof}
By integrating with respect to $x$ the first two equations of system  \eqref{u-v-w-eq1} we get
\begin{equation}
\label{L1-bound}
\begin{cases}
\frac{d}{dt}\int_{\Omega}u=\int_{\Omega}u\left(a_0-a_1u-a_2v-a_3\int_{\Omega}u-a_4\int_{\Omega}v\right),\quad x\in \Omega \quad\cr
\frac{d}{dt}\int_{\Omega}v=\int_{\Omega}v\left(b_0-b_1u-b_2v-b_3\int_{\Omega}u-b_4\int_{\Omega}v\right),\quad x\in \Omega. \quad\cr
\end{cases}
\end{equation}
Since $a_{2}\geq 0$ and  $b_1\geq 0$, by H\"older's inequality, it follows from \eqref{L1-bound} that
\[
\begin{cases}
\frac{d}{dt}\int_{\Omega}u\leq\int_{\Omega}u\left(a_0-\frac{\left(a_1-|\Omega|(a_3)_-\right)}{|\Omega|}\int_{\Omega}u+(a_4)_-\int_{\Omega}v\right)\\
\frac{d}{dt}\int_{\Omega}v\leq\int_{\Omega}v\left(b_0-\frac{\left(b_2-|\Omega|(b_4)_-\right)}{|\Omega|}\int_{\Omega}v+(b_3)_-\int_{\Omega}u\right).\\
\end{cases}
\]
Then   Lemma \ref{lem-0000} and condition {\bf (H2)} give
\[0\leq\int_{\Omega}u(t) \leq \max\left\{\int_{\Omega}u_0,\frac{a_0+\sqrt{a_0^2+4\left(\frac{{a_1-|\Omega|(a_3)_-}}{|\Omega|}\right)(a_4)_-M}}{2\left(\frac{{a_1-|\Omega|(a_3)_-}}{|\Omega|}\right)} \right\} ,\]
and
\[0\leq\int_{\Omega}v(t)\leq \max\left\{\int_{\Omega}v_0,\frac{b_0+\sqrt{b_0^2+4\left(\frac{ b_2-|\Omega|(b_4)_-}{|\Omega|}\right)(b_3)_-M}}{2\left(\frac{ b_2-|\Omega|(b_4)_-}{|\Omega|}\right)} \right\} ,\]
where $M$ is given by \eqref{M2}.

\end{proof}

\begin{lemma}
\label{lem-0003}
Suppose {\bf (H3)} holds. Let $(u,v,w)$ be the solution of \eqref{u-v-w-eq1} with initial condition  $(u(0),v(0))=(u_0,v_0)$
in $(0,T_{\max})$. Then
\begin{align*}
0\leq \int_{\Omega}u(t)+\int_{\Omega}v(t) \leq \max \left\{  \int_{\Omega}u_0+\int_{\Omega}v_0,  \,\,2|\Omega|\frac{\max\{a_0,b_0\}}{\min\{\alpha,\beta\}} \right\},\quad \forall t \in (0, T_{max}),
\end{align*}
where
\begin{equation}\label{alpha-def}
\alpha=a_1-\frac{1}{2}\Big((a_2)_-+(b_1)_-+|\Omega|\left((a_4)_-+(b_3)_-\right)\Big)-|\Omega|(a_3)_-,
\end{equation}
and
\begin{equation}\label{beta-def}
\beta= b_2-\frac{1}{2}\Big((a_2)_-+(b_1)_-+|\Omega|\left((a_4)_-+(b_3)_-\right)\Big)-|\Omega|(b_4)_-.
\end{equation}
\end{lemma}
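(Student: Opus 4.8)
The plan is to reduce the problem to a single scalar logistic differential inequality for the total mass $S(t):=\int_\Omega u(t)+\int_\Omega v(t)$ and then invoke the comparison principle for ODEs, in the same spirit as Lemma~\ref{lem-0000}. First I would integrate the first two equations of \eqref{u-v-w-eq1} over $\Omega$: the Neumann boundary conditions together with the divergence structure of both the diffusion and the chemotactic flux annihilate those contributions, yielding exactly \eqref{L1-bound}. Writing $U=\int_\Omega u$ and $V=\int_\Omega v$ and adding the two resulting identities gives
\begin{align*}
\frac{d}{dt}(U+V)&=a_0U+b_0V-a_1\int_\Omega u^2-b_2\int_\Omega v^2-(a_2+b_1)\int_\Omega uv\\
&\quad -a_3U^2-b_4V^2-(a_4+b_3)UV.
\end{align*}

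Next I would bound every indefinite-sign term from above. Using $-a_i\le(a_i)_-$ together with Young's inequality $\int_\Omega uv\le\tfrac12(\int_\Omega u^2+\int_\Omega v^2)$ and $UV\le\tfrac12(U^2+V^2)$ (valid since $U,V\ge0$), the mixed products get absorbed into the diagonal quadratic terms. Hypothesis {\bf (H3)} guarantees that $a_1-\tfrac12((a_2)_-+(b_1)_-)$ and $b_2-\tfrac12((a_2)_-+(b_1)_-)$ stay strictly positive, so I may apply the Cauchy--Schwarz (H\"older) inequality $(\int_\Omega u)^2\le|\Omega|\int_\Omega u^2$ in the correct direction to replace $\int_\Omega u^2$ by $U^2/|\Omega|$ and $\int_\Omega v^2$ by $V^2/|\Omega|$. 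Collecting the coefficients of $U^2$ and $V^2$, they turn out to be precisely $-\alpha/|\Omega|$ and $-\beta/|\Omega|$ with $\alpha,\beta$ as in \eqref{alpha-def}--\eqref{beta-def}, and {\bf (H3)} says exactly that $\alpha,\beta>0$. This produces
\begin{align*}
\frac{d}{dt}(U+V)\le a_0U+b_0V-\frac{\alpha}{|\Omega|}U^2-\frac{\beta}{|\Omega|}V^2.
\end{align*}

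Finally I would coarsen this to a scalar inequality by estimating $a_0U+b_0V\le\max\{a_0,b_0\}\,S$, by $\tfrac{\alpha}{|\Omega|}U^2+\tfrac{\beta}{|\Omega|}V^2\ge\tfrac{\min\{\alpha,\beta\}}{|\Omega|}(U^2+V^2)$, and by the elementary $U^2+V^2\ge\tfrac12 S^2$, obtaining
\begin{align*}
S'\le\max\{a_0,b_0\}\,S-\frac{\min\{\alpha,\beta\}}{2|\Omega|}\,S^2.
\end{align*}
This is a logistic ODE inequality whose positive equilibrium is $2|\Omega|\max\{a_0,b_0\}/\min\{\alpha,\beta\}$, so the comparison principle for ODEs yields $S(t)\le\max\{S(0),\,2|\Omega|\max\{a_0,b_0\}/\min\{\alpha,\beta\}\}$, which is the asserted bound. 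The arithmetic of matching coefficients to $\alpha$ and $\beta$ is routine; the one genuinely delicate point is the orientation of the Cauchy--Schwarz step, which is precisely why {\bf (H3)} (equivalently $\alpha,\beta>0$, and in particular positivity of $a_1-\tfrac12((a_2)_-+(b_1)_-)$) is indispensable: without it, replacing $\int_\Omega u^2$ by its lower bound $U^2/|\Omega|$ would point the wrong way and the estimate would collapse.
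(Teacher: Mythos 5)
Your proposal is correct and follows essentially the same route as the paper's own proof: integrate the first two equations, add, absorb the cross terms $\int_\Omega uv$ and $(\int_\Omega u)(\int_\Omega v)$ via Young's inequality, convert $\int_\Omega u^2$ and $\int_\Omega v^2$ into $(\int_\Omega u)^2/|\Omega|$ and $(\int_\Omega v)^2/|\Omega|$ by H\"older (using that {\bf (H3)} makes the relevant coefficients negative so the inequality points the right way), identify the resulting coefficients with $-\alpha/|\Omega|$ and $-\beta/|\Omega|$, and close with $(\int_\Omega u+\int_\Omega v)^2\le 2\bigl((\int_\Omega u)^2+(\int_\Omega v)^2\bigr)$ and ODE comparison. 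Your remark on the orientation of the Cauchy--Schwarz step is a valid and worthwhile observation that the paper leaves implicit.
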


\begin{proof}
From \eqref{L1-bound}, we get
\[
\begin{cases}
\frac{d}{dt}\int_{\Omega}u\leq\int_{\Omega}u\left(a_0-a_1u+(a_2)_-v+(a_3)_-\int_{\Omega}u+(a_4)_-\int_{\Omega}v\right)\\
\frac{d}{dt}\int_{\Omega}v\leq\int_{\Omega}v\left(b_0-b_2v+(b_1)_-u+(b_3)_-\int_{\Omega}u+(b_4)_-\int_{\Omega}v\right).\\
\end{cases}
\]
By adding the two above equations, we get
\begin{align*}
\frac{d}{dt}\int_{\Omega}(u+v) &\leq \max\{a_0,b_0\}\int_{\Omega}(u+v)-a_1\int_{\Omega}u^2+(a_3)_-\left(\int_{\Omega}u\right)^2-b_2\int_{\Omega}v^2+(b_4)_-\left(\int_{\Omega}v\right)^2\nonumber\\
&+\left((a_2)_-+(b_1)_-\right)\int_{\Omega}uv+\left((a_4)_-+(b_3)_-\right)\left(\int_{\Omega}u\right)\left(\int_{\Omega}v\right).
\end{align*}
Then by young's Inequality, we get
\begin{align*}
\frac{d}{dt}\left(\int_{\Omega}u+\int_{\Omega}v\right)& \leq \max\{a_0,b_0\}\left(\int_{\Omega}u+\int_{\Omega}v\right)-a_1\int_{\Omega}u^2+(a_3)_-\left(\int_{\Omega}u\right)^2\\
&-b_2\int_{\Omega}v^2+(b_4)_-\left(\int_{\Omega}v\right)^2+\frac{1}{2}\left((a_2)_-+(b_1)_-\right)\left(\int_{\Omega}u^2+\int_{\Omega}v^2\right).\\
&+\frac{1}{2}\left((a_4)_-+(b_3)_-\right)\left(\left(\int_{\Omega}u\right)^2+\left(\int_{\Omega}v\right)^2\right).
\end{align*}
Hence, from H\"older's inequality and the last  inequality, it follows that
\begin{equation}\label{L1-bound01}
\frac{d}{dt}\left(\int_{\Omega}u+\int_{\Omega}v\right) \leq \max\{a_0,b_0\}\left(\int_{\Omega}u+\int_{\Omega}v\right)-\frac{\alpha}{|\Omega|}\left(\int_{\Omega}u\right)^2-\frac{\beta}{|\Omega|}\left(\int_{\Omega}v\right)^2.
\end{equation}
Since
$\left(\int_{\Omega}u+\int_{\Omega}v\right)^2\leq 2\left(\left(\int_{\Omega}u\right)^2+\left(\int_{\Omega}v\right)^2\right)$,
it follows from inequality \eqref{L1-bound01} that
\begin{equation*}
\frac{d}{dt}\left(\int_{\Omega}u+\int_{\Omega}v\right) \leq \max\{a_0,b_0\}\left(\int_{\Omega}u+\int_{\Omega}v\right)-\frac{1}{2|\Omega|}\min\{\alpha,\beta\}\left(\int_{\Omega}u+\int_{\Omega}v\right)^2.
\end{equation*}
Observe that {\bf (H3)} is equivalent to $\min\{\alpha , \beta\}>0$. Then, we get by ODE's comparison that
$$
0\leq \int_{\Omega}u(t)+\int_{\Omega}v(t)\leq \max \left\{  \int_{\Omega}u_0+\int_{\Omega}v_0,  2|\Omega|\frac{\max\{a_0,b_0\}}{\min\{\alpha,\beta\}}  \right\}
$$
for all  $t \in (0, T_{\max})$.
\end{proof}

Next, we define the following two functions that we will see their importance in the upcoming lemmas. Set
\begin{equation}\label{f-function}
f(\gamma):=a_1-\frac{\gamma(a_2)_- }{\gamma+1}-\frac{(b_1)_-}{\gamma+1}-\frac{\chi_1k(\gamma-1)}{d_3\gamma}-\frac{\chi_1l(\gamma-1)}{d_3(\gamma+1)}-\frac{\chi_2k(\gamma-1)}{d_3\gamma(\gamma+1)},
\end{equation}
and
\begin{equation}\label{g-function}
g(\gamma):=b_2-\frac{\gamma(b_1)_- }{\gamma+1}-\frac{(a_2)_-}{\gamma+1}-\frac{\chi_2l(\gamma-1)}{d_3\gamma}-\frac{\chi_2k(\gamma-1)}{d_3(\gamma+1)}-\frac{\chi_1l(\gamma-1)}{d_3\gamma(\gamma+1)}.
\end{equation}
Note that the functions $f(\gamma)$ and $g(\gamma)$ are continuous  at every  $\gamma\neq -1$, $f(1)=a_1-\frac{1}{2}((a_2)_-+(b_1)_-)$ and $g(1)=b_2-\frac{1}{2}((b_1)_-+(a_2)_-)$. Hence if {\bf (H3)} holds, then $\min\{f(1),g(1)\}>0$. Furthermore we have $$
f(\frac{n}{2})>0 \iff  a_1> \frac{n(a_2)_-}{n+2}+\frac{2(b_1)_-}{n+2}+\frac{\chi_1k(n-2)}{d_3n}+\frac{\chi_1l(n-2)}{d_3(n+2)}+2\frac{\chi_2k(n-2)}{d_3n(n+2)},
$$
and
$$
g(\frac{n}{2})>0 \iff  b_2> \frac{n(b_1)_-}{n+2}+\frac{2(a_2)_-}{n+2}+\frac{\chi_2l(n-2)}{d_3n}+\frac{\chi_2l(n-2)}{d_3(n+2)}+2\frac{\chi_1l(n-2)}{d_3n(n+2)}.
$$

Now, we state and prove the following important lemma toward global existence of bounded solutions.
\begin{lemma}
\label{lem-0004}
 Let $(u,v,w)$ be the solution of \eqref{u-v-w-eq1} with initial condition  $(u(0),v(0))=(u_0,v_0)$
in $(0,T_{\max})$ and suppose that
\begin{equation}\label{L1-cond}
\sup_{0\leq t<T_{max}}\left(\int_{\Omega}u(t)+\int_{\Omega}v(t) \right)<\infty.
\end{equation}
For every $\gamma\neq -1$, let $f(\gamma)$ and $g(\gamma)$ be given by \eqref{f-function} and \eqref{g-function}, respectively.
\begin{enumerate}
\item[(1)]For every $\bar{\gamma}\geq 1$ such that $\min\{f(\bar{\gamma}),g(\bar{\gamma})\}>0$, there is $\gamma_0>\bar{\gamma}$ such that
\begin{align}\label{restate-eq1}
\sup_{1\leq \gamma\leq \gamma_0\ , 0\leq t<T_{max}}\left(\int_{\Omega}u^{\gamma}(t)+\int_{\Omega}v^{\gamma}(t)  \right)<\infty.
\end{align}
\item[(2)] If $min\{a_1,b_1,a_2,b_2\}>0$, then for every $\gamma_0\in[1\ ,\ \gamma^*)$, \eqref{restate-eq1} holds, where
$$\gamma^*:=\min\left\{\frac{\chi_1k}{(\chi_1k-a_1)_+}\ ,\ \frac{\chi_2l}{(\chi_2l-b_2)_+}\ ,\ \frac{\chi_1l}{(\chi_1l-a_2)_+}\ ,\ \frac{\chi_2k}{(\chi_2k-b_1)_+}\right\}.$$
\item[(3)] If $\min\{f(1),f(\frac{n}{2}),g(1),g(\frac{n}{2})\}>0$, then for every $\gamma_0>0$,  \eqref{restate-eq1} holds.
\item[(4)] If $min\{a_1,b_1,a_2,b_2\}>0$ and
$$\frac{n}{2}<\gamma^*:=\min\left\{\frac{\chi_1k}{(\chi_1k-a_1)_+}\ ,\ \frac{\chi_2l}{(\chi_2l-b_2)_+}\ ,\ \frac{\chi_1l}{(\chi_1l-a_2)_+}\ ,\ \frac{\chi_2k}{(\chi_2k-b_1)_+}\right\},$$ then for every $\gamma_0>0$,  \eqref{restate-eq1} holds.
\end{enumerate}

\end{lemma}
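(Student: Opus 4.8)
The plan is to derive a single ``master'' differential inequality for $\int_\Omega u^\gamma+\int_\Omega v^\gamma$ at each exponent $\gamma\ge 1$, and then to extract the four statements from it by choosing $\gamma$ appropriately. First I would test the first equation of \eqref{u-v-w-eq1} with $\gamma u^{\gamma-1}$ and the second with $\gamma v^{\gamma-1}$ and integrate over $\Omega$. The diffusion terms, after integration by parts using the Neumann conditions, produce the dissipative quantities $-\tfrac{4(\gamma-1)d_1}{\gamma}\int_\Omega|\nabla u^{\gamma/2}|^2$ and $-\tfrac{4(\gamma-1)d_2}{\gamma}\int_\Omega|\nabla v^{\gamma/2}|^2$, which are nonpositive since $\gamma\ge 1$. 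The chemotaxis terms are integrated by parts twice and $\Delta w$ is eliminated through the elliptic equation $\Delta w=(\lambda w-ku-lv)/d_3$; since $w\ge 0$ and $\gamma\ge 1$, the resulting term carrying $w$ has a favorable sign and is discarded, leaving only the ``bad'' superquadratic contributions $\tfrac{\chi_1 k(\gamma-1)}{d_3\gamma}\int_\Omega u^{\gamma+1}$, $\tfrac{\chi_1 l(\gamma-1)}{d_3\gamma}\int_\Omega u^\gamma v$ and their $v$-analogues. Applying Young's inequality to every cross term (splitting $\int_\Omega u^\gamma v\le\tfrac{\gamma}{\gamma+1}\int_\Omega u^{\gamma+1}+\tfrac1{\gamma+1}\int_\Omega v^{\gamma+1}$) and collecting coefficients, the total coefficient of $\int_\Omega u^{\gamma+1}$ is precisely $-f(\gamma)$ and that of $\int_\Omega v^{\gamma+1}$ is precisely $-g(\gamma)$, with $f,g$ as in \eqref{f-function}--\eqref{g-function}. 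The growth terms $a_0,b_0$ and all nonlocal terms contribute only a lower-order multiple of $\int_\Omega u^\gamma+\int_\Omega v^\gamma$ plus a constant, via the standing $L^1$-bound \eqref{L1-cond}. This yields
\[
\frac1\gamma\frac{d}{dt}\Big(\int_\Omega u^\gamma+\int_\Omega v^\gamma\Big)+\varepsilon_\gamma\Big(\int_\Omega|\nabla u^{\gamma/2}|^2+\int_\Omega|\nabla v^{\gamma/2}|^2\Big)+f(\gamma)\int_\Omega u^{\gamma+1}+g(\gamma)\int_\Omega v^{\gamma+1}\le C\Big(\int_\Omega u^\gamma+\int_\Omega v^\gamma\Big)+C,
\]
where $\varepsilon_\gamma=\tfrac{4(\gamma-1)}{\gamma^2}\min\{d_1,d_2\}$.

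For parts (1) and (2) I would argue at a single exponent and then interpolate. Given $\bar\gamma$ with $\min\{f(\bar\gamma),g(\bar\gamma)\}>0$, continuity of $f,g$ provides $\gamma_0>\bar\gamma$ with $f(\gamma_0),g(\gamma_0)>0$. Discarding the (nonnegative) gradient terms, I would absorb the lower-order term $C\int_\Omega u^{\gamma_0}$ into $\tfrac12 f(\gamma_0)\int_\Omega u^{\gamma_0+1}$ by H\"older--Young ($\int_\Omega u^{\gamma_0}\le\epsilon\int_\Omega u^{\gamma_0+1}+C_\epsilon$), and similarly for $v$. Then the reverse H\"older bound $\int_\Omega u^{\gamma_0+1}\ge|\Omega|^{-1/\gamma_0}\big(\int_\Omega u^{\gamma_0}\big)^{(\gamma_0+1)/\gamma_0}$ turns the inequality into a logistic-type ODE $y'\le C_1-C_2\,y^{(\gamma_0+1)/\gamma_0}$ for $y=\int_\Omega u^{\gamma_0}+\int_\Omega v^{\gamma_0}$, whose superlinear exponent forces a uniform-in-$t$ bound by ODE comparison. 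Bounding $\int_\Omega u^\gamma$ for the intermediate $\gamma\in[1,\gamma_0]$ then follows from \eqref{L1-cond} by interpolation, giving \eqref{restate-eq1}. Part (2) is the same argument specialized to the competitive case $(a_2)_-=(b_1)_-=0$: there $\gamma^*$ is exactly the threshold at which the chemotactic self-amplification would overtake the logistic damping, so for every $\gamma_0<\gamma^*$ the relevant coefficient stays positive and the logistic ODE closes.

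For parts (3) and (4) the new ingredient is the Gagliardo--Nirenberg inequality, needed to reach arbitrarily large exponents. The key point is that the superquadratic term $\int_\Omega u^{\gamma+1}$ can be absorbed into the diffusion dissipation $\int_\Omega|\nabla u^{\gamma/2}|^2$ precisely when some Lebesgue norm of exponent exceeding $n/2$ is already under control; writing $\int_\Omega u^{\gamma+1}=\|u^{\gamma/2}\|_{L^{2(\gamma+1)/\gamma}}^{2(\gamma+1)/\gamma}$ and interpolating against $\|u^{\gamma/2}\|_{L^{2r/\gamma}}=(\int_\Omega u^{r})^{\gamma/2r}$, the Gagliardo--Nirenberg exponent falls strictly below $2$ exactly when $r>n/2$. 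Accordingly I would proceed in two stages. \emph{Stage A:} use the logistic/H\"older argument of part (1) at an exponent $\bar\gamma$ slightly larger than $n/2$; this is possible because $f(n/2),g(n/2)>0$ (and $f(1),g(1)>0$ cover the low-dimensional range $n/2\le 1$, where the base $L^1$-bound already exceeds $n/2$) give, by continuity, $f(\bar\gamma),g(\bar\gamma)>0$, producing a uniform bound on $\int_\Omega u^{\bar\gamma}+\int_\Omega v^{\bar\gamma}$. \emph{Stage B:} with a base exponent $\bar\gamma>n/2$ in hand, the Gagliardo--Nirenberg absorption controls $\int_\Omega u^{\gamma+1}$ and $\int_\Omega v^{\gamma+1}$ regardless of the sign of $f(\gamma),g(\gamma)$, so a finite induction on the exponent yields a uniform bound on $\int_\Omega u^{\gamma}+\int_\Omega v^{\gamma}$ for every $\gamma$, hence \eqref{restate-eq1} for all $\gamma_0>0$. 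Part (4) is identical, with Stage A supplied by part (2): since $\gamma^*>n/2$, the competitive logistic estimate already furnishes a bound at some exponent in $(n/2,\gamma^*)$, after which Stage B applies.

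The main obstacle is Stage A of parts (3)--(4): crossing the critical exponent $n/2$. Below this threshold the diffusion alone cannot control the chemotactic superquadratic term, so one is forced to rely on the sign of $f,g$, and the whole scheme hinges on securing a uniform bound at a \emph{single} exponent strictly above $n/2$ out of the endpoint conditions $f(n/2),g(n/2)>0$. The accompanying technical care is the verification that the Gagliardo--Nirenberg interpolation exponent indeed drops below $2$ once the base exponent passes $n/2$ (so that Young's inequality closes the absorption), together with the bookkeeping in the master inequality confirming that the coefficients of the superquadratic terms collapse exactly to $-f(\gamma)$ and $-g(\gamma)$.
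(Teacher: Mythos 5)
Your proposal follows essentially the same route as the paper: the same testing procedure producing the master differential inequality with coefficients $-f(\gamma)$ and $-g(\gamma)$, the same continuity/reverse-H\"older/logistic-ODE closure for parts (1)--(2) (with the cross terms simply dropped below $\gamma^*$ in the competitive case), and the same two-stage Gagliardo--Nirenberg bootstrap past the critical exponent $n/2$ for parts (3)--(4). The only point to make explicit in Stage B is that after the Gagliardo--Nirenberg absorption one must retain a genuinely negative superquadratic term (the paper keeps $-a_1\int_\Omega u^{\gamma+1}-b_2\int_\Omega v^{\gamma+1}$, using the standing positivity of $a_1$ and $b_2$) so that the resulting ODE is logistic rather than merely linear and hence yields a bound uniform in $t$ on all of $[0,T_{\max})$.
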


\begin{proof} First, from  \eqref{L1-cond},  we have that
\begin{equation}\label{zz001}
C:=\sup_{0\leq t< T_{max}}\left[a_0+b_0+(|a_{3}|+|b_3|)\|u(\cdot,t)\|_{L^{1}(\overline{\Omega})}+(|a_4|+|b_4|)\|v(\cdot,t)\|_{L^{1}(\overline{\Omega})} \right]<\infty.
\end{equation}
(1) Since for every $w\in C^{0}(\overline{\Omega})$, the function $\gamma\mapsto |\Omega|^{-\frac{1}{\gamma}}\|w\|_\gamma$ is nondecreasing, it is enough to show that there is $\gamma_0>\bar{\gamma}$ such that $$\sup_{0\leq t<T_{\max}}\left(\int_{\Omega}u^{\gamma_{0}}(t)+\int_{\Omega}v^{\gamma_{0}}(t)  \right) <\infty. $$

Let $\gamma \geq 1$, by multiplying the  first equation of  $\eqref{u-v-w-eq1}$  by $u^{\gamma-1}(t)$ and integrating with respect to $x,$ we have for $t\in (0,T_{\max})$ that
      \begin{align}\label{zz002}\frac{1}{\gamma}\frac{d}{dt}\int_{\Omega}u^{\gamma}(t)+\frac{4d_1(\gamma-1)}{\gamma^2}\int_{\Omega}  |\nabla u^{\frac{\gamma}{2}}(t)|^2& =(\gamma-1)\chi_1\int_{\Omega}u^{\gamma-1}(t)\nabla u (t)\cdot \nabla w(t)\nonumber \\
  &+\int_{\Omega}u^{\gamma}(t)\Big[a_0-a_1u(t)-a_2v-a_3\int_{\Omega}u(t)-a_4\int_{\Omega}v(t)  \Big].
       \end{align}
By multiplying the  third equation of  $\eqref{u-v-w-eq1}$  by
$u^{\gamma}(\cdot)$ and integrating over $\Omega,$ we get
\begin{align}\label{zz003}
\int_{\Omega}u^{\gamma-1}(t)\nabla u (t)\cdot
\nabla w(t) =-\frac{\lambda}{d_3\gamma}\int_{\Omega}w(t)
u^{\gamma}(t)
+\frac{k}{d_3\gamma}\int_{\Omega}u^{\gamma+1}(t) +
\frac{l}{d_3\gamma}\int_{\Omega}u^{\gamma}(t)v(t).
\end{align}
Thus, combining \eqref{zz001}, \eqref{zz002} and  \eqref{zz003}, we have for $t\in (0\ ,\ T_{\max})$ that
\begin{eqnarray*}
&&\frac{1}{\gamma}\frac{d}{dt}\int_{\Omega}u^{\gamma}(t)+\frac{4(\gamma-1)d_1}{\gamma^2}\int_{\Omega}  |\nabla u^{\frac{\gamma}{2}}(t)|^2\\
&& \leq -\left(a_1- \frac{\chi_1k(\gamma-1)}{d_3\gamma}\right)\int_{\Omega}u^{\gamma+1}(t) +\left(\left(a_2\right)_-+\frac{\chi_1l(\gamma-1)}{d_3\gamma}\right)\int_{\Omega}u^{\gamma}(t)v(t) +C\int_{\Omega}u^{\gamma}.
\end{eqnarray*}
Combining this with the fact that $\int_{\Omega}u^{\gamma}v\leq \frac{\gamma}{\gamma+1}\int_{\Omega}u^{\gamma+1}+ \frac{1}{\gamma+1}\int_{\Omega}v^{\gamma+1}$,   we obtain that
\begin{eqnarray*}
&&\frac{1}{\gamma}\frac{d}{dt}\int_{\Omega}u^{\gamma}(t)+\frac{4(\gamma-1)d_1}{\gamma^2}\int_{\Omega}  |\nabla u^{\frac{\gamma}{2}}(t)|^2\\
&& \leq -\left(a_1-\frac{\gamma(a_2)_- }{\gamma+1}-\frac{\chi_1k(\gamma-1)}{d_3\gamma}-\frac{\chi_1l(\gamma-1)}{d_3(\gamma+1)}\right)\int_{\Omega}u^{\gamma+1}(t)  \\
&& +\left(\frac{(a_2)_-}{\gamma+1}+\frac{\chi_1l(\gamma-1)}{d_3\gamma(\gamma+1)}\right)\int_{\Omega}v^{\gamma+1}+C\int_{\Omega}u^{\gamma}.
\end{eqnarray*}
Similarly, from the second and third inequalities of system \eqref{u-v-w-eq1} we get
\begin{eqnarray*}
&&\frac{1}{\gamma}\frac{d}{dt}\int_{\Omega}v^{\gamma}(t)+\frac{4(\gamma-1)d_2}{\gamma^2}\int_{\Omega}  |\nabla v^{\frac{\gamma}{2}}(t)|^2\\
&& \leq -\left(b_2-\frac{\gamma(b_1)_- }{\gamma+1}-\frac{\chi_2l(\gamma-1)}{d_3\gamma}-\frac{\chi_2k(\gamma-1)}{d_3(\gamma+1)}\right)\int_{\Omega}v^{\gamma+1}(t)  \\
&&+\left(\frac{(b_1)_-}{\gamma+1}+\frac{\chi_2k(\gamma-1)}{d_3\gamma(\gamma+1)}\right)\int_{\Omega}u^{\gamma+1}(t)+C\int_{\Omega}v^{\gamma}.
\end{eqnarray*}
By adding the two last equations, we get
\begin{eqnarray}\label{L0001}
&&\frac{1}{\gamma}\frac{d}{dt}\left(\int_{\Omega}u^{\gamma}+\int_{\Omega}v^{\gamma}\right)+\frac{4(\gamma-1)}{\gamma^2}\left(d_1\int_{\Omega}  |\nabla u^{\frac{\gamma}{2}}|^2+d_2\int_{\Omega}  |\nabla u^{\frac{\gamma}{2}}|^2\right)\nonumber\\
&& \leq -f(\gamma)\int_{\Omega}u^{\gamma+1}(t)  -g(\gamma)\int_{\Omega}v^{\gamma+1}(t) +C\left(\int_{\Omega}u^{\gamma}+\int_{\Omega}v^{\gamma}\right),
\end{eqnarray}
where $f(\gamma)$ and $g(\gamma)$ are given by \eqref{f-function} and \eqref{g-function} respectively. Since by our hypothesis, $f(\bar{\gamma})>0$, $g(\bar{\gamma})>0$, from the continuity of $f$ and $g$ at $\bar{\gamma}$, there exists $\gamma_0>\bar{\gamma}$ such that for any $\gamma \in [\bar{\gamma}, \gamma_0]$, we have  $f(\gamma)>0$ and  $g(\gamma)>0$. Let $\alpha_{\bar{\gamma}}=\min_{\gamma \in [\bar{\gamma},\gamma_0]}f(\gamma),$ and  $\beta_{\bar{\gamma}}=\min_{\gamma \in [\bar{\gamma},\gamma_0]}g(\gamma)$.
Note also that we have
\begin{align}\label{L0002}
\left(\int_{\Omega}u^{\gamma}+\int_{\Omega}v^{\gamma}\right)^\frac{\gamma+1}{\gamma}
&\leq 2^\frac{\gamma+1}{\gamma}|\Omega|^\frac{1}{\gamma}\left( \int_{\Omega}u^{\gamma+1}+ \int_{\Omega}v^{\gamma+1}\right) .
\end{align}

Thus, it follows from inequalities \eqref{L0001} and \eqref{L0002} that

\begin{eqnarray*}
&&\frac{1}{\gamma}\frac{d}{dt}\left(\int_{\Omega}u^{\gamma}+\int_{\Omega}v^{\gamma}\right)+\frac{4(\gamma-1)}{\gamma^2}\left(\int_{\Omega}  |\nabla u^{\frac{\gamma}{2}}|^2+\int_{\Omega}  |\nabla u^{\frac{\gamma}{2}}|^2\right)\\
&& \leq \left(C-\frac{1}{ 2^\frac{\gamma+1}{\gamma}|\Omega|^\frac{1}{\gamma}}\min\{\alpha_{\bar{\gamma}},\beta_{\bar{\gamma}} \}\left(\int_{\Omega}u^{\gamma}+\int_{\Omega}v^{\gamma}\right)^\frac{1}{\gamma}\right)\left(\int_{\Omega}u^{\gamma}+\int_{\Omega}v^{\gamma}\right).
\end{eqnarray*}
Therefore by comparison of ODEs, we get
$$ 0 \leq \int_{\Omega}u^{\gamma}+\int_{\Omega}v^{\gamma} \leq \max\left\{\int_{\Omega}u_0^{\gamma}+\int_{\Omega}v_0^{\gamma}\ ,\  \left(\frac{C2^{\frac{1+\gamma}{\gamma}}|\Omega|^{\frac{1}{\gamma}}}{\min\{\alpha_{\bar{\gamma}},\beta_{\bar{\gamma}}\}}\right)^{\gamma} \right\}, \quad \forall t \in (0,T_{max}).$$

\noindent(2) Suppose that $a_1>0$, $b_1>0$,   $a_2>0$, and  $b_2> 0$.  Next, take $\gamma^*$ to be
$$\gamma^*:=\min\left\{\frac{\chi_1k}{(\chi_1k-a_1)_+}\ ,\ \frac{\chi_2l}{(\chi_2l-b_2)_+}\ ,\ \frac{\chi_1l}{(\chi_1l-a_2)_+}\ ,\ \frac{\chi_2k}{(\chi_2k-b_1)_+}\right\}. $$
For every $1\leq \gamma<\gamma^*$, we have that
\begin{equation}\label{ra-005}
 a_1>\frac{\chi_1k(\gamma-1)}{\gamma}\ ,\ b_1> \frac{\chi_2k(\gamma-1)}{\gamma}\ ,\quad b_2>\frac{\chi_2l(\gamma-1)}{\gamma}\ ,\ \text{and}\  a_2>\frac{\chi_1l(\gamma-1)}{\gamma}.
\end{equation}
It follows from \eqref{zz001}, \eqref{zz002}, \eqref{zz003}, and \eqref{ra-005} that
\begin{align*}
\frac{1}{\gamma}\frac{d}{dt}\int_{\Omega}u^{\gamma}(t)&\leq -\left(a_1-\frac{\chi_1k(\gamma-1)}{\gamma}\right)\int_{\Omega}u^{\gamma+1}(t)-(a_2-\frac{\chi_1l(\gamma-1)}{\gamma})\int_{\Omega}u^{\gamma}(t)v(t)+C\int_{\Omega}u^{\gamma}(t)\nonumber\\
&\leq -\left(a_1-\frac{\chi_1k(\gamma-1)}{\gamma}\right)\int_{\Omega}u^{\gamma+1}(t)+C\int_{\Omega}u^{\gamma}(t)\nonumber\\
&\leq -\left(a_1-\frac{\chi_1k(\gamma-1)}{\gamma}\right)|\Omega|^{-\frac{1}{\gamma}}\left(\int_{\Omega}u^{\gamma}(t)\right)^{\frac{\gamma+1}{\gamma}}+C\int_{\Omega}u^{\gamma}(t).
\end{align*}
Thus, it follows from Comparison principle for ODE's that
\begin{equation}\label{ra-006}
C_{\gamma}(u):=\sup_{0\leq t<T_{max}}\int_{\Omega}u^{\gamma}(t)<\infty.
\end{equation}
Recall that for every $0\leq t<T_{\max}$, we have
\begin{align*}
\frac{1}{\gamma}\frac{d}{dt}\int_{\Omega}v^{\gamma}(t)&\leq -\left(b_2-\frac{\chi_2l(\gamma-1)}{\gamma}\right)\int_{\Omega}v^{\gamma+1}(t)-(b_1-\frac{\chi_2k(\gamma-1)}
{\gamma})\int_{\Omega}v^{\gamma}(t)u(t)+C\int_{\Omega}v^{\gamma}(t).
\end{align*}
Hence, similar arguments as above yield that
\begin{equation}\label{ra-007}
C_{\gamma}(v):=\sup_{0\leq t<T_{max}}\int_{\Omega}v^{\gamma}(t)<\infty.
\end{equation}
So the result follows.

\noindent (3) \&(4) Set $\bar{\gamma}=\max\{1,\frac{n}{2}\}$ . Let us define $\gamma_{\infty}$ by
$$
\gamma_{\infty}=\sup\{\gamma_0>1\ \text{such that }\ \eqref{restate-eq1}\ holds\}.
$$

\noindent If $\min\{f(\bar{\gamma}),g(\bar{\gamma})\}>0$, then Lemma \ref{lem-0004}(1) implies that $\gamma_\infty>\bar{\gamma}$.\\
If $0<\min\left\{a_1,b_1,a_2,b_2,\frac{\chi_1k}{(\chi_1k-a_1)_+}-\frac{n}{2}\ ,\ \frac{\chi_2l}{(\chi_2l-b_2)_+}-\frac{n}{2}\ ,\ \frac{\chi_1l}{(\chi_1l-a_2)_+}-\frac{n}{2}\ ,\ \frac{\chi_2k}{(\chi_2k-b_1)_+}-\frac{n}{2}\right\}$, then Lemma \ref{lem-0004}(2) implies that $\gamma_\infty>\bar{\gamma}$.
\smallskip

\noindent {\bf Claim :} $\gamma_{\infty}=\infty.$

Suppose on the contrary that $\gamma_{\infty}<\infty$. Choose $\gamma_0$, with  $\max\{\bar{\gamma},\frac{\gamma_{\infty}}{2}\} <\gamma_0< \gamma_{\infty}$, be fixed. Next, choose $\gamma\in (\gamma_\infty\ ,\ 2\gamma_0)$. By definition of $\gamma_{\infty},$ we have that $\gamma_{0}$ satisfies \eqref{restate-eq1}. Note also that $\gamma_0$ that $\frac{2\gamma_0}{\gamma}>1.$ Using Gargliado-Nirenberg inequality, their is a constant $C_0>0$ such that
\begin{align}\label{ra-001}
\int_{\Omega}u^{1+\gamma}=\|u^{\frac{\gamma}{2}}\|_{\frac{2(1+\gamma)}{\gamma}}^{\frac{2(1+\gamma)}{\gamma}}& \leq C_0 \|\nabla u^{\frac{\gamma}{2}}\|_2^{\frac{2(1+\gamma)}{\gamma}\alpha} \|u^{\frac{\gamma}{2}}\|_{\frac{2\gamma_0}{\gamma}}^{\frac{2(1+\gamma)}{\gamma}(1-\alpha)}+C_0\|u^{\frac{\gamma}{2}}\|_{\frac{2}{\gamma}}^{\frac{2(1+\gamma)}{\gamma}}\nonumber\\
& = C_0 \|\nabla u^{\frac{\gamma}{2}}\|_2^{\frac{2(1+\gamma)}{\gamma}\alpha} \|u\|_{\gamma_0}^{(1+\gamma)(1-\alpha)}+C_0(\int_{\Omega}u)\left(\int_{\Omega}u\right)^{\gamma}\nonumber\\
& \leq  C_0 \|\nabla u^{\frac{\gamma}{2}}\|_2^{\frac{2(1+\gamma)}{\gamma}\alpha}\|u\|_{\gamma_0}^{(1+\gamma)(1-\alpha)}+C_0|\Omega|^{\gamma-1}(\int_{\Omega}u)\int_{\Omega}u^{\gamma},
\end{align}
where  $$ \alpha=\frac{\frac{\gamma}{2}[\frac{1}{\gamma_0}-\frac{1}{1+\gamma}]}{-\frac{1}{2}+\frac{1}{n}+\frac{\gamma}{2\gamma_0}}. $$ Since $\gamma_0$ satisfies \eqref{restate-eq1}, there is a constant $C_1>C_0$, independent of time,  such that inequality \eqref{ra-001} can be improved  to
\begin{equation}\label{ra-002}
\int_{\Omega}u^{1+\gamma}\leq C_1\|\nabla u^{\frac{\gamma}{2}}\|_{2}^{\frac{2(1+\gamma)}{\gamma}\alpha}+C_1\int_{\Omega}u^{\gamma}.
\end{equation}
Note that $\frac{\gamma}{(\gamma+1)\alpha}>1$ since
$\gamma_0>\frac{n}{2}.$ Then, by
Young's Inequality, there is $\tilde{C}_1>C_1$  such that \eqref{ra-002} can be improved to
\begin{eqnarray}\label{ra-003}
\left(f(\gamma)-a_1\right)\int_{\Omega}u^{1+\gamma} \leq \frac{4(\gamma-1)d_1}{\gamma^2} \|\nabla u^{\frac{\gamma}{2}}\|_2^2+\tilde{C}_1+\tilde{C}_1\int_{\Omega}u^{\gamma}.
\end{eqnarray}
Similar arguments yield that there is some positive constant $\tilde{C}_2>0$ such that
\begin{eqnarray}\label{ra-004}
\left(g(\gamma)-b_2\right)\int_{\Omega}v^{1+\gamma} \leq \frac{4(\gamma-1)d_2}{\gamma^2} \|\nabla v^{\frac{\gamma}{2}}\|_2^2+\tilde{C}_2+\tilde{C}_2\int_{\Omega}v^{\gamma}.
\end{eqnarray}
Combining \eqref{L0001}, \eqref{ra-003} and \eqref{ra-004}, there a positive constant $\tilde{C}$ such that
$$
\frac{1}{\gamma}\frac{d}{dt}\left(\int_{\Omega}u^{\gamma} +\int_{\Omega}v^{\gamma} \right)\leq -a_1\int_{\Omega}u^{1+\gamma}-b_2\int_{\Omega}v^{1+\gamma} +\tilde{C}\left(\int_{\Omega}u^{\gamma} +\int_{\Omega}v^{\gamma} \right)+\tilde{C}.
$$
Since $a_1>0$ and $b_2>0$, it follows from comparison principle for ODE's and the last inequality that
$$
\sup_{0\leq t<T_{\max}}\left(\int_{\Omega}u^{\gamma} +\int_{\Omega}v^{\gamma} \right) <\infty.
$$
It thus follows from the last inequality
$$
\sup_{1\leq p\leq \gamma, \ 0\leq t<T_{\max}}\left(\int_{\Omega}u^{p} +\int_{\Omega}v^{p} \right)\leq \left(\sup_{1\leq p\leq \gamma}|\Omega|^{\frac{1}{p}-\frac{1}{\gamma}}\right)\sup_{0\leq t<T_{\max}}\left(\int_{\Omega}u^{\gamma} +\int_{\Omega}v^{\gamma} \right) <\infty.
$$
That  is $\gamma$ satisfies \eqref{restate-eq1}. This implies that $\gamma\leq \gamma_\infty$.  Which is impossible since $\gamma>\gamma_\infty$.  Hence $\gamma_\infty=\infty$.
\end{proof}
 A natural question to ask is under what condition would \eqref{restate-eq1} holds for every $\gamma_0>0$? The next corollary provide a sufficient condition for \eqref{restate-eq1} to be satisfied for every $\gamma_0>0$.

\begin{corollary}\label{lem-0005} Assume that {\bf (H3)} holds. If in addition, either $\min\{f(\frac{n}{2}),g(\frac{n}{2})\}>0$ or  {\bf (H4)}, or $0<\min\left\{a_1,b_1,a_2,b_2,\frac{\chi_1k}{(\chi_1k-a_1)_+}-\frac{n}{2}\ ,\ \frac{\chi_2l}{(\chi_2l-b_2)_+}-\frac{n}{2}\ ,\ \frac{\chi_1l}{(\chi_1l-a_2)_+}-\frac{n}{2}\ ,\ \frac{\chi_2k}{(\chi_2k-b_1)_+}-\frac{n}{2}\right\}$ holds,  then for every $\gamma_0\geq 1$ and any nonnegative initial functions $u_0,v_0\in C^0(\overline{\Omega})$, the classical solution $(u(\cdot,\cdot),v(\cdot,\cdot), w(\cdot,\cdot))$ of \eqref{u-v-w-eq1} with initial $(u(\cdot,0),v(\cdot,0))=(u_0,v_0)$ satisfies
$$
\sup_{1\leq \gamma\leq \gamma_0\ , 0\leq t<T_{max}}\left(\int_{\Omega}u^{\gamma}(t)+\int_{\Omega}v^{\gamma}(t)  \right)<\infty.
$$
\end{corollary}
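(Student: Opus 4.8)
The plan is to obtain this corollary by feeding the uniform $L^1$ bound of Lemma \ref{lem-0003} into the $L^\gamma$ bootstrap of Lemma \ref{lem-0004}, and then checking that each of the three alternative hypotheses triggers exactly one branch of Lemma \ref{lem-0004}. First I would observe that, since {\bf (H3)} holds, Lemma \ref{lem-0003} gives $\sup_{0\le t<T_{\max}}\big(\int_\Omega u(t)+\int_\Omega v(t)\big)<\infty$, which is precisely the standing assumption \eqref{L1-cond} required to invoke Lemma \ref{lem-0004} for the solution furnished by the local existence result (Lemma \ref{lm-local-001}) for arbitrary nonnegative $u_0,v_0\in C^0(\overline{\Omega})$. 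Moreover, as recorded right after \eqref{f-function}--\eqref{g-function}, {\bf (H3)} implies $\min\{f(1),g(1)\}>0$. Both facts are then available in every case.

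Next I would dispose of two of the three cases by direct citation. If $\min\{f(\tfrac n2),g(\tfrac n2)\}>0$, then combining with $\min\{f(1),g(1)\}>0$ yields $\min\{f(1),f(\tfrac n2),g(1),g(\tfrac n2)\}>0$, so Lemma \ref{lem-0004}(3) applies verbatim and produces \eqref{restate-eq1} for every $\gamma_0>0$. If instead the explicit hypothesis $0<\min\{a_1,b_1,a_2,b_2,\ \tfrac{\chi_1k}{(\chi_1k-a_1)_+}-\tfrac n2,\ \ldots\}$ holds, then this is, by definition of $\gamma^*$, nothing but the statement that $\min\{a_1,b_1,a_2,b_2\}>0$ and $\tfrac n2<\gamma^*$, i.e.\ exactly the hypothesis of Lemma \ref{lem-0004}(4); applying it again gives \eqref{restate-eq1} for every $\gamma_0>0$.

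The remaining case, {\bf (H4)}, is the only one needing a short computation, and it is where I expect the (mild) obstacle to lie. I would show that {\bf (H4)} is equivalent to $\min\{a_1,b_1,a_2,b_2\}>0$ together with $\tfrac n2<\gamma^*$, again reducing to Lemma \ref{lem-0004}(4). The argument is a case analysis on the sign of each difference hidden in the $(\cdot)_+$: for the first factor, $\tfrac{\chi_1k}{(\chi_1k-a_1)_+}>\tfrac n2$ is automatic when $a_1\ge\chi_1k$ (the quantity is $+\infty$), while for $a_1<\chi_1k$ it rearranges to $a_1>\tfrac{\chi_1k(n-2)}{n}$; in either subcase the requirement is $a_1>\max\{0,\tfrac{\chi_1k(n-2)}{n}\}$, which is the corresponding line of {\bf (H4)} (with the factor from $d_3$ in \eqref{zz003} carried through). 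Repeating this for all four factors yields the equivalence, and Lemma \ref{lem-0004}(4) again gives \eqref{restate-eq1} for every $\gamma_0>0$.

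In all three cases the conclusion of Lemma \ref{lem-0004} holds for every $\gamma_0>0$, hence in particular for every $\gamma_0\ge1$, which is exactly the asserted bound $\sup_{1\le\gamma\le\gamma_0,\,0\le t<T_{\max}}\big(\int_\Omega u^\gamma(t)+\int_\Omega v^\gamma(t)\big)<\infty$. Thus the entire proof is bookkeeping: the substantive inequalities are already in Lemma \ref{lem-0003} and Lemma \ref{lem-0004}, and the only genuine step is the sign-by-sign matching of {\bf (H4)} to the threshold $\gamma^*>\tfrac n2$.
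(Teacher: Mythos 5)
Your proof is correct and rests on the same two pillars as the paper's: Lemma \ref{lem-0003} converts {\bf (H3)} into the uniform $L^1$ bound \eqref{L1-cond}, and Lemma \ref{lem-0004} then does the $L^\gamma$ bootstrap. The genuine difference is in how the alternative {\bf (H4)} is discharged. The paper asserts that {\bf (H4)} produces a sequence $\bar\gamma_m\to\infty$ with $\min\{f(\bar\gamma_m),g(\bar\gamma_m)\}>0$, so that Lemma \ref{lem-0004}(1) can be applied for arbitrarily large exponents; but that assertion is not justified by {\bf (H4)} alone, since under {\bf (H4)} one has $(a_2)_-=0$ and $\lim_{\gamma\to\infty}f(\gamma)=a_1-\frac{(l+k)\chi_1}{d_3}$, which can be negative (take $a_1=\frac{\chi_1 k}{d_3}$, which satisfies {\bf (H4)} for every $n$), so $f(\gamma)$ is eventually negative and no such sequence exists. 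You instead unwind {\bf (H4)} into the exact hypothesis of Lemma \ref{lem-0004}(4), namely $\min\{a_1,a_2,b_1,b_2\}>0$ together with $\gamma^*>\frac{n}{2}$: each inequality such as $\frac{\chi_1k}{(\chi_1k-a_1)_+}>\frac{n}{2}$ is automatic when $a_1\geq\chi_1k$ and otherwise rearranges to $a_1>\frac{\chi_1k(n-2)}{n}$, matching the corresponding line of {\bf (H4)} up to the $d_3$ factor that the paper itself drops when passing from \eqref{zz003} to the definition of $\gamma^*$ (an internal inconsistency you rightly flag). Part (4) of Lemma \ref{lem-0004}, with its Gagliardo--Nirenberg iteration, is precisely designed for this situation, so your route not only reaches the conclusion but actually repairs the paper's argument for the {\bf (H4)} case. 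The remaining two alternatives are handled identically in both proofs, via parts (3) and (4) of Lemma \ref{lem-0004} respectively.
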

\begin{proof}
Observe that
$$
\lim_{\gamma\to\infty}f(\gamma)=a_1-\left((a_{2})_- +\frac{(l+k)\chi_1}{d_3}\right)
$$
and
$$
\lim_{\gamma\to\infty}g(\gamma)=b_2-\left((b_{1})_- +\frac{(l+k)\chi_2}{d_3}\right).
$$
Note that if {\bf (H3)} holds, then $\min\{f(1),g(1)\}>0$ and by Lemma \ref{lem-0003} ,\eqref{L1-cond} holds.  If {\bf (H4)} holds, there is a sequence of positive real numbers $\{\bar\gamma_m\}_{m\geq 1}$ with $\lim_{m\to\infty}\bar\gamma_m=\infty$ such that
$$
\min\{f(\bar{\gamma}_m),g(\bar{\gamma}_m)\}>0,\quad \forall \ m\geq1.
$$
The result thus follows from Lemma \ref{lem-0004}.
\end{proof}
\begin{corollary}\label{lem-0006} Assume that {\bf (H2)} holds. If in addition,  $$0<\min\left\{a_1,b_1,a_2,b_2,\frac{\chi_1k}{(\chi_1k-a_1)_+}-\frac{n}{2}\ ,\ \frac{\chi_2l}{(\chi_2l-b_2)_+}-\frac{n}{2}\ ,\ \frac{\chi_1l}{(\chi_1l-a_2)_+}-\frac{n}{2}\ ,\ \frac{\chi_2k}{(\chi_2k-b_1)_+}-\frac{n}{2}\right\}$$ holds,  then for every $\gamma_0\geq 1$ and any nonnegative initial functions $u_0,v_0\in C^0(\overline{\Omega})$, the classical solution $(u(\cdot,\cdot),v(\cdot,\cdot), w(\cdot,\cdot))$ of \eqref{u-v-w-eq1} with initial $(u(\cdot,0),v(\cdot,0))=(u_0,v_0)$ satisfies
$$
\sup_{1\leq \gamma\leq \gamma_0\ , 0\leq t<T_{max}}\left(\int_{\Omega}u^{\gamma}(t)+\int_{\Omega}v^{\gamma}(t)  \right)<\infty.
$$
\end{corollary}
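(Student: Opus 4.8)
The plan is to obtain the desired $L^\gamma$-bounds by combining the uniform $L^1$-bound furnished by Lemma \ref{lem-0002} with the bootstrap argument of Lemma \ref{lem-0004}(4). The strategy mirrors the proof of Corollary \ref{lem-0005}, except that the $L^1$-control now comes from hypothesis {\bf (H2)} rather than from {\bf (H3)} and Lemma \ref{lem-0003}.

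First I would observe that the additional hypothesis forces $\min\{a_1,b_1,a_2,b_2\}>0$; in particular $a_2>0$ and $b_1>0$, so that $(a_2)_-=(b_1)_-=0$ and we are in the local competitive case. Together with {\bf (H2)}, this is precisely the setting of Lemma \ref{lem-0002}, which yields the uniform bounds $\int_{\Omega}u(t)\le M_0$ and $\int_{\Omega}v(t)\le M_1$ for all $0\le t<T_{max}$. Hence the $L^1$-condition \eqref{L1-cond} required in Lemma \ref{lem-0004} is satisfied.

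Next I would read off that the additional hypothesis is equivalent to $\min\{a_1,b_1,a_2,b_2\}>0$ holding simultaneously with $\frac{n}{2}<\gamma^*$, where $\gamma^*$ is the quantity defined in Lemma \ref{lem-0004}(4); indeed the requirement $\frac{\chi_1k}{(\chi_1k-a_1)_+}-\frac{n}{2}>0$ together with its three analogues is just $\frac{n}{2}<\gamma^*$ rewritten term by term. Thus both hypotheses of Lemma \ref{lem-0004}(4) are met, and that lemma gives \eqref{restate-eq1} for every $\gamma_0>0$, which is exactly the claimed bound $\sup_{1\le\gamma\le\gamma_0,\,0\le t<T_{max}}\big(\int_{\Omega}u^{\gamma}(t)+\int_{\Omega}v^{\gamma}(t)\big)<\infty$.

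There is essentially no analytic obstacle, since all the substantive work is already contained in Lemmas \ref{lem-0002} and \ref{lem-0004}(4). The only point requiring care is the bookkeeping verification that the single hypothesis of the corollary simultaneously supplies the local-competitive structure ($a_2,b_1\ge 0$) needed to invoke Lemma \ref{lem-0002} and the gap condition $\frac{n}{2}<\gamma^*$ needed to invoke Lemma \ref{lem-0004}(4); once this matching is made explicit, the conclusion follows immediately by chaining the two results.
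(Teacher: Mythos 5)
Your proposal is correct and follows exactly the paper's route: invoke Lemma \ref{lem-0002} under {\bf (H2)} to secure the uniform $L^1$-bound \eqref{L1-cond}, then apply Lemma \ref{lem-0004}(4) using the observation that the extra hypothesis is precisely $\min\{a_1,b_1,a_2,b_2\}>0$ together with $\frac{n}{2}<\gamma^*$. Your explicit remark that the hypothesis forces $a_2>0$ and $b_1>0$, so that the local-competitive assumption of Lemma \ref{lem-0002} is actually met, is a detail the paper's two-line proof leaves implicit, but the argument is the same.
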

\begin{proof}
Note that if {\bf (H2)} holds, then  by Lemma \ref{lem-0002}, \eqref{L1-cond} holds.
The result thus follows from Lemma \ref{lem-0004} (4).
\end{proof}

Now, by using the previous lemmas, we prove Theorem \ref{thm-global-001}.

\vspace{-0.1in}\begin{proof} [Proof of Theorem \ref{thm-global-001}]

(1) Let $(\overline u(t),\underline u(t),\overline v(t),\underline v(t))$ be as in lemma \ref{lem-0001}.
 It suffices to prove that $0\leq \underline{u}(t) \leq u(x,t;0,u_0,v_0) \leq \overline{u} (t)$ and  $0\leq \underline{v}(t) \leq v(x,t;0,u_0,v_0) \leq \overline{v} (t)$ for all $ 0\le t<T_{\max}$ and $x \in \bar{\Omega}$.  This method is the so called  rectangles method. 

 Observe that for any $\epsilon>0$,      there exists $0<t_{\epsilon }<T_{\max}$ such that
\vspace{-0.1in}\begin{equation}
\label{cont-1}
\underline{u}(t)-2\epsilon <u(x,t;0,u_0,v_0) <\overline{u}(t)+2\epsilon, \quad \text{ for all $(x, t) \in \Omega \times [0, t_{\epsilon})$,}
\vspace{-0.1in}\end{equation}

\vspace{-0.1in}\begin{equation}
\label{cont-2}
\underline{v}(t)-2\epsilon <v(x,t;0,u_0,v_0) <\overline{v}(t)+2\epsilon, \quad \text{ for all $(x, t) \in \Omega \times [0, t_{\epsilon})$,}
\vspace{-0.1in}\end{equation}
and by  comparison principle for elliptic equations,
\vspace{-0.1in}\begin{equation}
\label{elliptic-reg}
k\underline{u}(t)+l\underline{v}(t)-2(k+l)\epsilon \leq \lambda w(x,t) \leq k\overline{u}(t)+l\overline{v}(t)+2(k+l)\epsilon, \quad \text{ for all $(x, t) \in \Omega \times [0, t_{\epsilon}).$}
\vspace{-0.1in}\end{equation}
Let
\vspace{-0.1in}$$T_\epsilon=\sup\{t_\epsilon\in (0,T_{\max})\, \text{such that }\,\, \eqref{cont-1}\, \text{and}\, \eqref{cont-2} \, \text{hold}\}.
\vspace{-0.05in}$$
It then suffices to prove  that $T_\epsilon= T_{\max}$.
Assume by contradiction that $T_\epsilon<T_{\max}$.
Then there is  $x_0 \in\bar\Omega$ such that
\vspace{-0.05in}$$
{\rm either}\,\,\, u(x_0, T_{\epsilon};0,u_0,v_0)=\underline{u}(T_{\epsilon })-2\epsilon\,\,\, {\rm    or}\,\,\, u(x_0, T_{\epsilon};0,u_0,v_0)=\overline{u}(T_{\epsilon })+2\epsilon,
$$
or there is $y_0\in\bar\Omega$ such that
\vspace{-0.05in}$$
{\rm either}\,\,\, v(y_0, T_{\epsilon};0,u_0,v_0)=\underline{v}(T_{\epsilon })-2\epsilon\,\,\, {\rm    or}\,\,\, v(y_0, T_{\epsilon};0,u_0,v_0)=\overline{v}(T_{\epsilon })+2\epsilon.
$$
Let $\overline{U}(x,t)=u(x,t;0,u_0,v_0)-\overline{u}(t),$  $\underline{U}(x,t)=u(x,t;0,u_0,v_0)-\underline{u}(t),$ $\overline{V}(x,t)=v(x,t;0,u_0,v_0)-\overline{v}(t)$ and $\underline{V}(x,t)=v(x,t;0,u_0,v_0)-\underline{v}(t).$ Note that  for $t\in (0,T_{\max})$, $\overline{U}$ satisfies
 \vspace{-0.05in}
 \begin{align*}
 \overline{U}_t-d_1\Delta \overline{U}= &-\chi_1  \nabla\overline{U} \cdot \nabla w
   +\overline{U} \left[a_0-\Big(a_1-\chi_1k\Big)(u+\overline{u})  -  \chi_1 \lambda w+\chi_1 l v\right] \\
   &+\underbrace{\frac{\chi_1l}{d_3}\overline{u}\overline{V}}_{I_{0}(x,t)} +\underbrace{\frac{\chi_1}{d_3} \overline u (-\lambda w+k\underline{u}+l\underline{v})}_{I_{1}(x,t)}
                                                                     +\underbrace{\Big(a_2\Big)_+(-uv+ \overline{u}\underline{v})}_{I_{2}(x,t)}+\underbrace{\Big(a_2\Big)_-(uv- \overline{u}\,\overline{v})}_{I_{3}(x,t)}\\
 &+\underbrace{\Big(a_3\Big)_+\left(- u(\int_{\Omega}u)+|\Omega|\overline u \underline u\right)}_{I_{4}(x,t)} +\underbrace{\Big(a_3\Big)_-\left( u(\int_{\Omega}u)-|\Omega|\overline {u}^2\right)}_{I_{5}(x,t)}\\
& +\underbrace{\Big(a_4\Big)_+\left(- u(\int_{\Omega}v)+|\Omega|\overline u \underline v\right)}_{I_{6}(x,t)}+\underbrace{\Big(a_4\Big)_-\left( u(\int_{\Omega}v)-|\Omega|\overline u\, \overline v\right)}_{I_{7}(x,t)}. \end{align*}

By multiplying the above inequality by $\overline{U}_{+}$ and integrating with respect to $x$ over $\Omega$, we get
\begin{eqnarray*}
  & & \frac{1}{2}\frac{d}{dt}\int_{\Omega}(\overline{U}_{+})^2       +d_1\int_{\Omega}|\nabla (\overline{U}_{+}) |^2\\
 &  & \le \int_{\Omega}(\overline{U}_{+})^2 \left[  a_0-\Big(a_1-\chi_1k\Big)(u+\overline{u})  -  \chi_1 \lambda w+\chi_1 l v+\frac{\chi_1(\lambda w-ku-lv)}{2d_3}  \right]                 \nonumber \\
&&\quad + \int_{\Omega}(\overline{U}_{+}){I_{0}(x,t)} dx   + \int_{\Omega}(\overline{U}_{+}){I_{1}(x,t)} dx + \int_{\Omega}(\overline{U}_{+}){I_{2}(x,t)} dx+ \int_{\Omega}(\overline{U}_{+}){I_{3}(x,t)}dx \nonumber\\
&&\quad + \int_{\Omega}(\overline{U}_{+}){I_{4}(x,t)}dx    + \int_{\Omega}(\overline{U}_{+}){I_{5}(x,t)}dx  + \int_{\Omega}(\overline{U}_{+}){I_{6}(x,t)} dx+ \int_{\Omega}(\overline{U}_{+}){I_{7}(x,t)}dx \nonumber
\end{eqnarray*}
for $t\in (0\ ,\ T_{\max})$.  For every $t\in(0\ ,\ T_{\varepsilon})$, we have
 \vspace{-0.1in}
 \begin{align*}
 \int_{\Omega}(\overline{U}_{+}){I_{0}(x,t)}=\frac{\chi_1l}{d_3}\overline{u} \int_{\Omega}(\overline{U}_{+})\overline{V}                                                                           &\leq\frac{\chi_1l}{d_3}\overline{u} \int_{\Omega}(\overline{U}_{+})(\overline{V}_+)\leq \frac{\chi_1l{\overline{u}}}{2d_3}\Big(\int_{\Omega}(\overline{U}_{+})^2+ \int_{\Omega}(\overline{V}_+)^2\Big),
                                                                         \end{align*}
and
 \begin{align*}
 \int_{\Omega}(\overline{U}_{+}){I_{1}(x,t)}
&\leq \frac{\chi_1\overline{u}}{2d_3}\Big(\int_{\Omega}(\overline{U}_{+})^2+ \int_{\Omega}\left((\lambda w-k\underline{u}-l\underline{v})_-\right)^2\Big).\end{align*}
Moreover by  using the third equation of  $\eqref{u-v-w-eq1},$  we get
\vspace{-0.1in}\begin{align*}  &\frac{d_3}{\lambda}\int_{\Omega}| \nabla(\lambda w-k\underline{u}-l\underline{v})_-|^2 +\int_{\Omega}\left((\lambda w
-k\underline{u}-l\underline{v})_-\right)^2 \\
& =  -k \int_{\Omega}(\underline{U}) (\lambda w-k\underline{u}-l\underline{v})_- -l \int_{\Omega}(\underline{V}) (\lambda w-k\underline{u}-l\underline{v})_-\\
& \leq k \int_{\Omega}(\underline{U})_- (\lambda w-k\underline{u}-l\underline{v})_-  +l \int_{\Omega}(\underline{V})_- (\lambda w-k\underline{u}-l\underline{v})_- \\
&\leq k^2\int_{\Omega}(\underline{U}_{-})^2+l^2\int_{\Omega}(\underline{V}_{-})^2+\frac{1}{2}\int_{\Omega}\left((\lambda w-k\underline{u}-l\underline{v})_-\right)^2.\end{align*}
Therefore \[\int_{\Omega}\left((\lambda w-k\underline{u}-l\underline{v})_-\right)^2\leq  2k^2\int_{\Omega}(\underline{U}_{-})^2+2l^2\int_{\Omega}(\underline{V}_{-})^2. \]
Thus
\[ \int_{\Omega}(\overline{U}_{+}){I_{1}(x,t)}\leq \frac{\chi_1\overline{u}}{2d_3}\Big(\int_{\Omega}(\overline{U}_{+})^2+ 2k^2\int_{\Omega}
(\underline{U}_{-})^2+2l^2\int_{\Omega}(\underline{V}_{-})^2\Big),\]
\vspace{-0.1in}
\begin{align*}
\int_{\Omega}(\overline{U}_{+}){I_{2}(x,t)} dx 
=-\Big(a_2\Big)_+ \int_{\Omega}(\overline{U}_{+})\left(\overline{U}v+\overline{u}\underline{V}\right)
& =-\Big(a_2\Big)_+ \int_{\Omega}(\overline{U}_{+})^2v-\Big(a_2\Big)_+ \overline{u}\int_{\Omega}(\overline{U}_{+})\underline{V}\\
&\leq \Big(a_2\Big)_+ \overline{u}\int_{\Omega}(\overline{U}_{+})(\underline{V}_-)\\
&\leq \frac{\Big(a_2\Big)_+
\overline{u}}{2}\Big(\int_{\Omega}(\overline{U}_{+})^2+
\int_{\Omega}(\underline{V}_-)^2\Big),\end{align*}
\vspace{-0.1in}
\begin{align*}
 \int_{\Omega}(\overline{U}_{+}){I_{3}(x,t)}dx
& =\Big(a_2\Big)_- \int_{\Omega}(\overline{U}_{+})u\overline{V}+\Big(a_2\Big)_- \overline{v}\int_{\Omega}(\overline{U}_{+})^2\\
&\leq\Big(a_2\Big)_-(\overline u +2\epsilon) \int_{\Omega}(\overline{U}_{+})(\overline{V}_+)+\Big(a_2\Big)_- \overline{v}\int_{\Omega}(\overline{U}_{+})^2 \\
&\leq \frac{\Big(a_2\Big)_{-} (\overline u
+2\epsilon)}{2}\Big(\int_{\Omega}(\overline{U}_{+})^2+
\int_{\Omega}(\overline{V}_+)^2\Big) +\Big(a_2\Big)_-
\overline{v}\int_{\Omega}(\overline{U}_{+})^2,\end{align*}
\vspace{-0.1in}
\begin{align*}
 \int_{\Omega}(\overline{U}_{+}){I_{4}(x,t)}dx
& =-\Big(a_3\Big)_+ \int_{\Omega}(\overline{U}_{+})u\int_{\Omega}\underline{U}-|\Omega|\underline{u}\Big(a_3\Big)_+ \int_{\Omega}(\overline{U}_{+})^2\\
&\leq\Big(a_3\Big)_+ \int_{\Omega}(\overline{U}_{+})u\int_{\Omega}(\underline{U}_-)\\
&\leq \frac{\Big(a_3\Big)_+ (\overline u
+2\epsilon)}{2}|\Omega|\Big(\int_{\Omega}(\overline{U}_{+})^2+
\int_{\Omega}(\underline{U}_-)^2\Big) ,\end{align*}
\vspace{-0.1in}
\begin{align*}
 \int_{\Omega}(\overline{U}_{+}){I_{5}(x,t)}dx
 &=\Big(a_3\Big)_{-} \int_{\Omega}(\overline{U}_{+})\left(u\int_{\Omega}\overline{U}+|\Omega|\overline{u}\overline{U}\right) \\
&\leq\Big(a_3\Big)_{-}(\overline u +2\epsilon)\int_{\Omega}(\overline{U}_{+})\int_{\Omega}(\overline{U}_{+})+|\Omega|\overline{u}\Big(a_3\Big)_{\color{red}-} \int_{\Omega}(\overline{U}_{+})^2 \\
&\leq |\Omega|\Big(a_3\Big)_{-}\left[ (\overline u
+2\epsilon)+\overline{u}\right]
\int_{\Omega}(\overline{U}_{+})^2,\end{align*}
\vspace{-0.1in}
\begin{align*}
 \int_{\Omega}(\overline{U}_{+}){I_{6}(x,t)}dx
 &=-\Big(a_4\Big)_+ \int_{\Omega}(\overline{U}_{+})\left( u\int_{\Omega}\underline{V}+|\Omega|\underline{v}\overline{U}\right) \\
&\leq\Big(a_4\Big)_+(\overline u +2\epsilon) \int_{\Omega}(\overline{U}_{+})\int_{\Omega}(\underline{V}_-)\\
&\leq |\Omega|\frac{\Big(a_4\Big)_+(\overline u +2\epsilon)
}{2}\Big(\int_{\Omega}(\overline{U}_{+})^2+
\int_{\Omega}(\underline{V}_-)^2\Big),\end{align*} and
\vspace{-0.1in}
\begin{align*}
 \int_{\Omega}(\overline{U}_{+}){I_{7}(x,t)}dx
 &=\Big(a_4\Big)_- \int_{\Omega}(\overline{U}_{+})\left( u\int_{\Omega}\overline{V}+|\Omega|\overline{v}\overline{U}\right) \\
&\leq\Big(a_4\Big)_-(\overline u +2\epsilon) \int_{\Omega}(\overline{U}_{+})\int_{\Omega}(\overline{V}_+)+|\Omega|\overline{v}\Big(a_4\Big)_- \int_{\Omega}(\overline{U}_{+})^2\\
&\leq |\Omega|\frac{\Big(a_4\Big)_-(\overline u +2\epsilon)  }{2}\Big(\int_{\Omega}(\overline{U}_{+})^2+ \int_{\Omega}(\overline{V}_+)^2\Big)+|\Omega|\overline{v}
\Big(a_4\Big)_- \int_{\Omega}(\overline{U}_{+})^2.\end{align*}
By combining all these inequalities, there is a constant $C_1=C_1(a_i,b_i,k,l,\chi_1,|\Omega|)$ such that
\[\frac{d}{dt}\int_{\Omega}(\overline{U}_{+})^2\leq C_1\Big(\int_{\Omega}(\overline{U}_{+})^2+\int_{\Omega}(\overline{V}_{+})^2+\int_{\Omega}(\underline{U}_{-})^2+\int_{\Omega}(\underline{V}_{-})^2\Big)\quad {\rm for}\quad t\in (0, T_\epsilon].\]
In a similar way, we get:
\[\frac{d}{dt}\int_{\Omega}(\overline{ V}_{+})^2\leq C_2\Big(\int_{\Omega}(\overline{U}_{+})^2+\int_{\Omega}(\overline{V}_{+})^2+\int_{\Omega}(\underline{U}_{-})^2+\int_{\Omega}(\underline{V}_{-})^2\Big)\quad {\rm for}\quad t\in (0, T_\epsilon],\]
\[\frac{d}{dt}\int_{\Omega}(\underline{U}_{-})^2\leq C_3\Big(\int_{\Omega}(\overline{U}_{+})^2+\int_{\Omega}(\overline{V}_{+})^2+\int_{\Omega}(\underline{U}_{-})^2+\int_{\Omega}(\underline{V}_{-})^2\Big)\quad {\rm for}\quad t\in (0, T_\epsilon],\]
and
\[\frac{d}{dt}\int_{\Omega}(\underline{v}_{-})^2\leq C_4\Big(\int_{\Omega}(\overline{U}_{+})^2+\int_{\Omega}(\overline{V}_{+})^2+\int_{\Omega}(\underline{U}_{-})^2+\int_{\Omega}(\underline{V}_{-})^2\Big)\quad {\rm for}\quad t\in (0, T_\epsilon].\]
Therefore there is an positive constant $C=C(a_i,b_i,k,l,\chi_1,|\Omega|)$ such that
\begin{align}
\label{eq-uper-lower-odes}
&\frac{d}{dt}\Big(\int_{\Omega}(\overline{U}_{+})^2+\int_{\Omega}(\overline{V}_{+})^2+\int_{\Omega}(\underline{U}_{-})^2+\int_{\Omega}(\underline{V}_{-})^2\Big)\nonumber\\
&\leq
C\Big(\int_{\Omega}(\overline{U}_{+})^2+\int_{\Omega}(\overline{V}_{+})^2+\int_{\Omega}(\underline{U}_{-})^2+\int_{\Omega}(\underline{V}_{-})^2\Big)\quad
{\rm for}\quad t\in (0, T_\epsilon].
\end{align}

Since $\overline{U}_{+}(\cdot,0)=\underline{U}_-(\cdot,0)=\overline{V}_{+}(\cdot,t_0)=\underline{V}_-(\cdot,0)=0,$ $\eqref{eq-uper-lower-odes}$  implies  $\overline{U}_{+}(x,t)=\underline{U}_- (x,t)=\overline{V}_{+}(x,t)=\underline{V}_{+}(x,t)=0$ for  $(x,t)\in\Omega\times [0,T_{\epsilon}]$.
Therefore,
\vspace{-0.1in}$$\underline{u}(t) \leq u(x,t;0,u_0,v_0) \leq \overline{u} (t)\quad (x, t) \in \overline{\Omega} \times [0, T_{\epsilon}].
$$
and
\vspace{-0.1in}$$\underline{v}(t) \leq v(x,t;0,u_0,v_0) \leq \overline{v} (t)\quad (x, t) \in \overline{\Omega} \times [0, T_{\epsilon}].
$$
This is a contradiction. Therefore,
$T_{\epsilon}=T_{\max}$ and the result follows by lemma \ref{lem-0001} .

\medskip

(2) By Corollary \ref{lem-0006} we have that for any $\gamma>1$
$$
0 \leq \int_{\Omega}u^{\gamma}+\int_{\Omega}v^{\gamma} \leq C_\gamma
$$
and by standard arguments involving Moser Alikakos iteration method
or as in \cite{TW07,ITBWS16} we get $$\sup_{0\leq t<T_{max}}\|u(t)\|_{\infty} <\infty \quad \text{and}\quad \sup_{0\leq t<T_{max}}\|v(t)\|_{\infty}< \infty.$$

\medskip

(3) By Corollary \ref{lem-0005} we have that for any $\gamma>1$
$$
0 \leq \int_{\Omega}u^{\gamma}+\int_{\Omega}v^{\gamma} \leq C_\gamma
$$
and by standard arguments involving Moser Alikakos iteration method
or as in \cite{TW07,ITBWS16} we get $$\sup_{0\leq t<T_{max}}\|u(t)\|_{\infty} <\infty \quad \text{and}\quad \sup_{0\leq t<T_{max}}\|v(t)\|_{\infty}< \infty.$$
\end{proof}

\section{Asymptotic Behavior}
In this section, we study the asymptotic behavior of classical solutions of \eqref{u-v-w-eq1}. Throughout this section we shall suppose that the condition   {\bf (H1)} holds. Thus, under these conditions, Theorem \ref{thm-global-001}(1) implies that for every nonnegative initial $u_0,v_0\in C(\overline{\Omega})$, the classical solution $(u(\cdot,\cdot;u_0,v_0),v(\cdot,\cdot;u_0,v_0),w(\cdot,\cdot;u_0,v_0))$  is globally defined in time and bounded.  Next, for every nonnegative initial functions  $u_{0},v_{0}\in C(\overline{\Omega})$, we define  $$\overline{u}(u_0,v_0)=\limsup_{t \to \infty}(\max_{x \in \bar{\Omega}}u(x,t;u_0;v_0)),$$ $$\underline{u}(u_0,v_0)=\liminf_{t \to \infty}(\min_{x \in \bar{\Omega}}u(x,t;u_0;v_0)),$$  $$\overline{v}(u_0,v_0)=\limsup_{t \to \infty}(\max_{x \in \bar{\Omega}}v(x,t;u_0;v_0)),$$ and $$\underline{v}(u_0,v_0)=\liminf_{t \to \infty}(\min_{x \in \bar{\Omega}}v(x,t;u_0;v_0)).$$
Thus, from Theorem \ref{thm-global-001}(1), for every nonnegative initial $u_0,v_0\in C(\overline{\Omega})$,   we have
\[0\leq u(x,t) \leq \max\left\{\|u_0\|_\infty,\frac{a_0+\sqrt{a_0^2+4\big(a_1-k\frac{\chi_1}{d_3}-|\Omega|(a_3)_-\big)\big((a_2)_-+|\Omega|(a_4)_-+l\frac{\chi_1}{d_3}\big)M}}{2\big(a_1-k\frac{\chi_1}{d_3}-|\Omega|(a_3)_-\big)} \right\}, \]

and

\[0\leq v(x,t)\leq  \max\left\{\|v_0\|_\infty,\frac{b_0+\sqrt{b_0^2+4\big(b_2-l\frac{\chi_2}{d_3}-|\Omega|(b_4)_-\big)\big((b_1)_-+|\Omega|(b_3)_-+k\frac{\chi_2}{d_3}\big)M}}{2\big(b_2-l\frac{\chi_2}{d_3}-|\Omega|(b_4)_-\big)} \right\},\]
with
\[M=e^{\max\left\{\ln{\left(\|u_0\|_\infty\|v_0\|_\infty\right)},\,2\ln\{\frac{a_0+b_0}{2 L}\}\right\}},\]
where
\begin{align*}
L=\min\Big\{&a_1-k\frac{\chi_1+\chi_2}{d_3}-|\Omega|((a_3)_-+|(b_3)_-)-(b_1)_-,\\
&b_2-l\frac{\chi_2+\chi_1}{d_3}-|\Omega|((b_4)_-+(a_4)_-)-(a_2)_-\Big\}.
\end{align*}
Using the definition of $\limsup$ and  of $\liminf,$ and elliptic regularity,
we get that given $\epsilon >0,$ there exists $T_{\epsilon}>0$ such that
\begin{equation}
\label{Asym-eq-01}
\underline{u}(u_0;v_0)-\epsilon \leq u(x,t) \leq \overline{u}(u_0;v_0)+\epsilon , \quad \underline{v}(u_0;v_0)-\epsilon \leq v(x,t) \leq \overline{v}(u_0;v_0)+\epsilon,\quad \forall t>T_{\epsilon}
\end{equation}
and
\begin{equation}
\label{Asym-eq-02}
k\underline{u}(u_0;v_0)+l\underline{v}(u_0,v_0)-(l+k)\epsilon \leq \lambda w(x,t) \leq k\overline{u}+l\overline{v}+
(k+l)\epsilon ,\quad \forall t>T_{\epsilon}
\end{equation}
In what follows, we drop the dependence of $\overline{u},\underline{u},\overline{v}$ and $\underline{v}$ on $(u_0,v_0)$.

\subsection{Coexistence}
In this subsection, our aim is to find conditions on the parameters only which guarantee that  $$0<\underline{u}(u_0,v_0)=\overline{u}(u_0,v_0)=\frac{a_0(b_2+|\Omega|b_4)-b_0(a_2+|\Omega|a_4)}{(b_2+|\Omega|b_4)(a_1+|\Omega|a_3)-(a_2+|\Omega|a_4)(b_1+|\Omega|b_3)},$$ and  $$0<\underline{v}(u_0,v_0)=\overline{v}(u_0,v_0)=\frac{a_0(b_1+|\Omega|b_3)-b_0(a_1+|\Omega|a_3)}{(b_1+|\Omega|b_3)(a_2+|\Omega|a_4)-(a_1+|\Omega|a_3)(b_2+|\Omega|b_4)}.$$  This method is the so called eventual comparison method. 

\medskip

 Let $u_{0},v_0\in C(\overline{\Omega})$ be given  nonnegative initials such that $0<\min\{\|u_{0}\|_{\infty};\|v_0\|_{\infty}\}$. Observe that if either $\|u_{0}\|_{\infty}=0$ or $\|v_0\|_{\infty}=0$, system \eqref{u-v-w-eq1} reduces to the one species case and we refer the reader to \cite{ITBWS16}, \cite{TW07} and  references therein. Since $0<\min\{\|u_{0}\|_{\infty};\|v_0\|_{\infty}\}$, the maximum principle for parabolic equations implies that
 $$0<\min\{\|u(\cdot,t;u_0;v_0)\|_{\infty}; \|v(\cdot,t;u_0,v_0)\|_{\infty}\},\quad \forall\ t\geq 0. $$
Next, we prove the following two important lemmas toward the proof of the coexistence .
\begin{lemma}
\label{lem-asym-coexist-01}
Suppose {\bf (H1)} holds. Then
\begin{equation}\label{Asymp-coexist-eq-5}
\overline{u}\leq \frac{\left\{ a_0-\left(|\Omega|(a_3)_++\frac{\chi_1}{d_3}k\right)\underline{u}+|\Omega|(a_3)_-\overline{u}M_1(\overline{v},\underline{v})\right\}_{+}}{a_{1}-\frac{\chi_{1}k}{d_{3}}},
\end{equation}
and
\begin{equation}\label{Asymp-coexist-eq-6}
\underline{u}\geq \frac{\left\{ a_0-\left(|\Omega|(a_3)_++k\frac{\chi_1}{d_3}\right)\overline{u}+|\Omega|(a_3)_-\underline{u}M_2(\overline{v},\underline{v})\right\}_{+}}{a_{1}-\frac{\chi_{1}k}{d_{3}}},
\end{equation}
where
\[M_1(\overline{v},\underline{v})=-\left((a_2)_+ +|\Omega|(a_4)_++l\frac{\chi_{1}}{d_{3}}\right)\underline{v}+\left((a_2)_- +|\Omega|(a_4)_-+l\frac{\chi_{1}}{d_{3}}\right)\overline{v},\]
and
\[M_2(\overline{v},\underline{v})=-\left((a_2)_+ +|\Omega|(a_4)_++l\frac{\chi_{1}}{d_{3}}\right)\overline{v}+\left((a_2)_- +|\Omega|(a_4)_-+l\frac{\chi_{1}}{d_{3}}\right)\underline{v}.\]
\end{lemma}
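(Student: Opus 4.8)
The plan is to use the eventual comparison method: eliminate the chemotactic term using the elliptic equation for $w$, reduce the first equation of \eqref{u-v-w-eq1} to a reaction--advection inequality whose zeroth-order part has spatially constant bounds for large $t$, and then compare the solution against a spatially homogeneous logistic ODE. First I would expand $\nabla\cdot(u\nabla w)=\nabla u\cdot\nabla w+u\Delta w$ and substitute $\Delta w=\tfrac1{d_3}(\lambda w-ku-lv)$ from the third equation of \eqref{u-v-w-eq1}, turning the $u$-equation into
\begin{equation*}
u_t=d_1\Delta u-\chi_1\nabla u\cdot\nabla w+u\Big[a_0-\tfrac{\chi_1\lambda w}{d_3}-\big(a_1-\tfrac{\chi_1k}{d_3}\big)u+\big(\tfrac{\chi_1l}{d_3}-a_2\big)v-a_3\!\int_\Omega u-a_4\!\int_\Omega v\Big].
\end{equation*}
Note that $B:=a_1-\tfrac{\chi_1k}{d_3}>0$ by {\bf (H1)}.

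Fix $\epsilon>0$ and $T_\epsilon$ as in \eqref{Asym-eq-01}--\eqref{Asym-eq-02}. For $t>T_\epsilon$ I would bound every term of the bracket except $-Bu$ from above, using $\underline v-\epsilon\le v\le\overline v+\epsilon$, $\int_\Omega v\in[|\Omega|(\underline v-\epsilon),|\Omega|(\overline v+\epsilon)]$, the analogous bounds for $\int_\Omega u$, and the lower bound $\lambda w\ge k\underline u+l\underline v-(k+l)\epsilon$; splitting $a_2,a_3,a_4$ into positive and negative parts, the $v$- and $\int_\Omega v$-contributions collect exactly into $M_1(\overline v,\underline v)$. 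This yields, pointwise in $x$, the inequality $u_t\le d_1\Delta u-\chi_1\nabla u\cdot\nabla w+u(A_\epsilon-Bu)$ on $t>T_\epsilon$, where $A_\epsilon=a_0-\big(|\Omega|(a_3)_++\tfrac{\chi_1k}{d_3}\big)\underline u+|\Omega|(a_3)_-\overline u+M_1(\overline v,\underline v)+C\epsilon$ for a constant $C$ depending only on the parameters, i.e.\ $A_\epsilon$ is the numerator of \eqref{Asymp-coexist-eq-5} up to the $C\epsilon$ correction.

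Next I would compare $u$ with the solution $\phi$ of the logistic ODE $\phi'=\phi(A_\epsilon-B\phi)$ with $\phi(T_\epsilon)=\max_{x}u(x,T_\epsilon)>0$. Since $\phi$ is constant in $x$ it solves the Neumann problem $\phi_t=d_1\Delta\phi-\chi_1\nabla\phi\cdot\nabla w+\phi(A_\epsilon-B\phi)$ exactly (the advection term vanishes), so $z=u-\phi$ satisfies $z_t\le d_1\Delta z-\chi_1\nabla z\cdot\nabla w+c(x,t)z$ with $c=A_\epsilon-B(u+\phi)$ bounded and $z(\cdot,T_\epsilon)\le0$; the parabolic maximum principle then gives $u\le\phi$ on $[T_\epsilon,\infty)$. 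Letting $t\to\infty$ and using $\lim_{t\to\infty}\phi=\tfrac{(A_\epsilon)_+}{B}$ gives $\overline u\le\tfrac{(A_\epsilon)_+}{B}$, and then $\epsilon\to0$ yields \eqref{Asymp-coexist-eq-5}. Inequality \eqref{Asymp-coexist-eq-6} follows from the mirror argument: bound the bracket from below (now using $\lambda w\le k\overline u+l\overline v+(k+l)\epsilon$ with the roles of $\overline{\,\cdot\,}$ and $\underline{\,\cdot\,}$ swapped, producing $M_2$), and compare from below with the logistic subsolution $\psi'=\psi(\tilde A_\epsilon-B\psi)$, $\psi(T_\epsilon)=\min_x u(x,T_\epsilon)>0$, where positivity of the minimum comes from the strong maximum principle.

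The main obstacle is justifying the comparison cleanly despite $w$ being coupled to the unknown. The key point I would emphasize is that the comparison functions $\phi,\psi$ are spatially homogeneous, so the chemotactic advection term drops for them and the difference $z$ obeys an ordinary scalar parabolic inequality with bounded coefficients, to which the maximum principle applies using only the $C^1$-boundedness of $w$ (from elliptic regularity together with the global bounds of Theorem \ref{thm-global-001}(1)). The remaining work is purely bookkeeping: tracking signs through the $(\,\cdot\,)_{\pm}$ decompositions so that the constant terms assemble precisely into the numerators of \eqref{Asymp-coexist-eq-5}--\eqref{Asymp-coexist-eq-6}.
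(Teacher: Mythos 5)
Your proposal is correct and follows essentially the same route as the paper: substitute $\Delta w=\tfrac{1}{d_3}(\lambda w-ku-lv)$ into the $u$-equation, use the eventual bounds \eqref{Asym-eq-01}--\eqref{Asym-eq-02} for $t\ge T_\epsilon$ to reduce to a logistic-type differential inequality with spatially constant coefficients (up to an $O(\epsilon)$ correction), compare with the spatially homogeneous logistic ODE via the parabolic comparison principle, pass to $t\to\infty$ using the explicit limit $(A_\epsilon)_+/B$ of the logistic ODE (which requires $a_1-\tfrac{\chi_1 k}{d_3}>0$ from {\bf (H1)}), and finally let $\epsilon\to0$. The only difference is that you spell out the maximum-principle justification of the comparison for the difference $z=u-\phi$, which the paper simply cites as the comparison principle for parabolic equations; your bookkeeping of the $(\cdot)_\pm$ decompositions also correctly reproduces the additive form of the numerator that appears in the paper's proof (the product $\overline{u}M_1$ in the lemma's displayed statement is a typo for $\overline{u}+M_1$).
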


\begin{proof}
Since {\bf (H1)} holds, we have : For every $t\geq T_{\varepsilon}$, it follows from \eqref{Asym-eq-01} and \eqref{Asym-eq-02} that
\begin{align}\label{F00}
&u_t-d_1\Delta u+\chi_1 \nabla u \cdot \nabla w\nonumber\\
&=u\left\{ a_0-(a_1-\frac{\chi_1 }{d_3}k)u -(a_2)_+v+((a_2)_-+l\frac{\chi_1 }{d_3})v-(a_3)_+\int_{\Omega}u+(a_3)_-\int_{\Omega}u\right\}\nonumber\\
&+u\left\{-(a_4)_+\int_{\Omega}v+(a_4)_-\int_{\Omega}v-\frac{\chi_1 }{d_3}\lambda w\right\}\nonumber\\
& \leq u\left\{ a_0-(a_1-\frac{\chi_1 }{d_3}k)u-\left(|\Omega|(a_3)_++k\frac{\chi_1}{d_3}\right)\underline{u}+|\Omega|(a_3)_-\overline{u}-\left((a_2)_+ +|\Omega|(a_4)_++l\frac{\chi_{1}}{d_{3}}\right)\underline{v}\right\} \nonumber\\
&+u\left\{ \left((a_2)_- +|\Omega|(a_4)_-+l\frac{\chi_{1}}{d_{3}}\right)\overline{v}+\left(|a_2|+|\Omega|(|a_3|+|a_4|)+k\frac{\chi_1}{d_3}+2l\frac{\chi_{1}}{d_{3}}\right)\epsilon\right\}.
\end{align}
Let $\overline{U}_{\varepsilon}$ be the solution of the following ODE
\begin{small}
 \begin{equation}\label{F01}
\begin{cases}
\overline{U}_{\varepsilon}'=\overline{U}_{\varepsilon}\left\{ a_0-(a_1-\frac{\chi_1 }{d_3}k)\overline{U}_{\varepsilon}-\left(|\Omega|(a_3)_++k\frac{\chi_1}{d_3}\right)\underline{u}+|\Omega|(a_3)_-\overline{u}-\left((a_2)_+ +|\Omega|(a_4)_++l\frac{\chi_{1}}{d_{3}}\right)\underline{v}\right\} \cr
\qquad+\overline{U}_{\varepsilon}\left\{ \left((a_2)_- +|\Omega|(a_4)_-+l\frac{\chi_{1}}{d_{3}}\right)\overline{v}+\left(|a_2|+|\Omega|(|a_3|+|a_4|)+k\frac{\chi_1}{d_3}+2l\frac{\chi_{1}}{d_{3}}\right)\epsilon\right\}\cr
\overline{U}( T_{\epsilon})=\|u(\cdot,T_{\varepsilon})\|_{\infty}\cr
\end{cases}
\end{equation}
\end{small}
Thus \eqref{F00}, \eqref{F01} and comparison principle for parabolic equations imply that
\begin{equation}\label{F02}
u(\cdot,t)\leq \overline{U}_{\varepsilon}(t),\quad \forall\ t\geq T_{\varepsilon}.
\end{equation}
Let $h=|a_2|+|\Omega|(|a_3|+|a_4|)+k\frac{\chi_1}{d_3}+2l\frac{\chi_{1}}{d_{3}}.$
Now, since $a_1-\frac{\chi_1 }{d_3}k>0$ by {\bf (H1)}, the function $\overline{U}_{\varepsilon}$ is globally defined in time and satisfies
$$\lim_{t\to \infty}\overline{U}_{\varepsilon}(t)=\frac{\left\{ a_0-(|\Omega|(a_3)_++\frac{\chi_1}{d_3}k)\underline{u}+|\Omega|(a_3)_-\overline{u}+M_1(\overline{v},\underline{v})+h\epsilon\right\}_{+}}{a_{1}-\frac{\chi_{1}k}{d_{3}}}.$$
Combining this with inequality \eqref{F02}, we obtain that
\begin{equation}\label{F001}
\overline{u}\leq\frac{\left\{ a_0-(|\Omega|(a_3)_++\frac{\chi_1}{d_3}k)\underline{u}+|\Omega|(a_3)_-\overline{u}+M_1(\overline{v},\underline{v})+h\epsilon\right\}_{+}}{a_{1}-\frac{\chi_{1}k}{d_{3}}}.
\end{equation}
Letting $\varepsilon$ tends to 0 in the last inequality, we obtain that
$$
\overline{u}\leq \frac{\left\{ a_0-\left(|\Omega|(a_3)_++\frac{\chi_1}{d_3}k\right)\underline{u}+|\Omega|(a_3)_-\overline{u}+M_1(\overline{v},\underline{v})\right\}_{+}}{a_{1}-\frac{\chi_{1}k}{d_{3}}}.
$$
Thus, \eqref{Asymp-coexist-eq-5} follows.

Similarly, for every $t\geq T_{\varepsilon}$, it follows from \eqref{Asym-eq-01} and \eqref{Asym-eq-02} that
\begin{align}\label{F03}
&u_t-d_1\Delta u+\chi_1 \nabla u \cdot \nabla w\nonumber\\
&=u\left\{ a_0-(a_1-\frac{\chi_1 }{d_3}k)u -(a_2)_+v+((a_2)_-+l\frac{\chi_1 }{d_3})v-(a_3)_+\int_{\Omega}u+(a_3)_-\int_{\Omega}u\right\}\nonumber\\
&+u\left\{-(a_4)_+\int_{\Omega}v+(a_4)_-\int_{\Omega}v-\frac{\chi_1 }{d_3}\lambda w\right\}\nonumber\\
& \geq u\left\{ a_0-(a_1-\frac{\chi_1 }{d_3}k)u-\left(|\Omega|(a_3)_++k\frac{\chi_1}{d_3}\right)\overline{u}+|\Omega|(a_3)_-\underline{u}-\left((a_2)_+ +|\Omega|(a_4)_++l\frac{\chi_{1}}{d_{3}}\right)\overline{v}\right\} \nonumber\\
&+u\left\{ \left((a_2)_- +|\Omega|(a_4)_-+l\frac{\chi_{1}}{d_{3}}\right)\underline{v}-\left(|a_2|+|\Omega|(|a_3|+|a_4|)+k\frac{\chi_1}{d_3}+2l\frac{\chi_{1}}{d_{3}}\right)\epsilon\right\}.
\end{align}
Let $\underline{U}_{\varepsilon}$ be the solution of the following ODE
\begin{small}
 \begin{equation}\label{F04}
\begin{cases}
\underline{U}_{\varepsilon}'=\underline{U}_{\varepsilon}\left\{ a_0-(a_1-\frac{\chi_1 }{d_3}k)\underline{U}_{\varepsilon}-\left(|\Omega|(a_3)_++k\frac{\chi_1}{d_3}\right)\overline{u}+|\Omega|(a_3)_-\underline{u}-\left((a_2)_+ +|\Omega|(a_4)_++l\frac{\chi_{1}}{d_{3}}\right)\overline{v}\right\} \cr
\qquad+\underline{U}_{\varepsilon}\left\{ \left((a_2)_- +|\Omega|(a_4)_-+l\frac{\chi_{1}}{d_{3}}\right)\underline{v}-\left(|a_2|+|\Omega|(|a_3|+|a_4|)+k\frac{\chi_1}{d_3}+2l\frac{\chi_{1}}{d_{3}}\right)\epsilon\right\}\cr
\underline{U}( T_{\epsilon})=\min_{x \in \bar{\Omega}}u(x,T_{\varepsilon})\cr
\end{cases}
\end{equation}
\end{small}
Thus \eqref{F03}, \eqref{F04} and comparison principle for parabolic equations imply that
\begin{equation}\label{F05}
u(\cdot,t)\geq \overline{U}_{\varepsilon}(t),\quad \forall\ t\geq T_{\varepsilon}.
\end{equation}
Now, since $a_1-\frac{\chi_1 }{d_3}k>0$ by {\bf (H1)}, the function $\underline{U}_{\varepsilon}$ is globally defined in time and satisfies
$$\lim_{t\to \infty}\underline{U}_{\varepsilon}(t)=\frac{\left\{ a_0-(|\Omega|(a_3)_++\frac{\chi_1}{d_3}k)\overline{u}+|\Omega|(a_3)_-\underline{u}+M_2(\overline{v},\underline{v})-h\epsilon\right\}_{+}}{a_{1}-\frac{\chi_{1}k}{d_{3}}}.$$
Combining this with inequality \eqref{F02}, we obtain that
\begin{equation}\label{F002}
\underline{u}\geq\frac{\left\{ a_0-(|\Omega|(a_3)_++\frac{\chi_1}{d_3}k)\overline{u}+|\Omega|(a_3)_-\underline{u}+M_2(\overline{v},\underline{v})-h\epsilon\right\}_{+}}{a_{1}-\frac{\chi_{1}k}{d_{3}}}.
\end{equation}
Letting $\varepsilon$ tends to 0 in the last inequality, we obtain that
$$
\underline{u}\geq \frac{\left\{ a_0-\left(|\Omega|(a_3)_++k\frac{\chi_1}{d_3}\right)\overline{u}+|\Omega|(a_3)_-\underline{u}+M_1(\overline{v},\underline{v})\right\}_{+}}{a_{1}-\frac{\chi_{1}k}{d_{3}}}.
$$
Thus, \eqref{Asymp-coexist-eq-6} follows.
\end{proof}

\begin{lemma}
\label{lem-asym-coexist-02}
Suppose {\bf (H1)} holds. Then
\begin{equation}\label{Asymp-coexist-eq-7}
\overline{v}\leq \frac{\left\{ b_0-\left(|\Omega|(b_4)_++\frac{\chi_2}{d_3}l\right)\underline{v}+|\Omega|(b_4)_-\overline{v}+M_1(\overline{u},\underline{u})\right\}_{+}}{b_{2}-\frac{\chi_{2}l}{d_{3}}},
\end{equation}
and
\begin{equation}\label{Asymp-coexist-eq-8}
\underline{v}\geq\frac{\left\{ b_0-\left(|\Omega|(b_4)_++\frac{\chi_2}{d_3}l\right)\overline{v}+|\Omega|(b_4)_-\underline{v}+M_2(\overline{u},\underline{u})\right\}_{+}}{b_{2}-\frac{\chi_{2}l}{d_{3}}},
\end{equation}
where
\[M_1(\overline{u},\underline{u})=-\left((b_1)_+ +|\Omega|(b_3)_++k\frac{\chi_{2}}{d_{3}}\right)\underline{u}+\left((b_1)_- +|\Omega|(b_3)_-+k\frac{\chi_{2}}{d_{3}}\right)\overline{u},\]
and
\[M_2(\overline{u},\underline{u})=-\left((b_1)_+ +|\Omega|(b_3)_++k\frac{\chi_{2}}{d_{3}}\right)\overline{u}+\left((b_1)_- +|\Omega|(b_3)_-+k\frac{\chi_{2}}{d_{3}}\right)\underline{u}.\]
\end{lemma}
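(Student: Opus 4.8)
The plan is to prove Lemma~\ref{lem-asym-coexist-02} by the same eventual-comparison argument already carried out for Lemma~\ref{lem-asym-coexist-01}, but with the roles of $u$ and $v$ interchanged. The only genuine asymmetry to keep in mind is that in the third equation of \eqref{u-v-w-eq1} the chemical $w$ is produced by $v$ at rate $l$ and by $u$ at rate $k$; consequently the surviving self-limitation constant in the $v$-equation is $b_2-\frac{\chi_2 l}{d_3}$ (not $a_1-\frac{\chi_1 k}{d_3}$), while the cross coupling to $u$ picks up the factor $k\frac{\chi_2}{d_3}$.

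First I would use $d_3\Delta w=\lambda w-ku-lv$ to eliminate $\Delta w$ from the chemotactic term $-\chi_2\nabla(v\cdot\nabla w)=-\chi_2\nabla v\cdot\nabla w-\chi_2 v\Delta w$, turning the $v$-equation into the exact analogue of \eqref{F00}:
\begin{align*}
v_t-d_2\Delta v+\chi_2\nabla v\cdot\nabla w
&=v\Big\{b_0-\big(b_2-\tfrac{\chi_2 l}{d_3}\big)v-(b_1)_+u+\big((b_1)_-+k\tfrac{\chi_2}{d_3}\big)u\Big\}\\
&\quad+v\Big\{-(b_4)_+\int_{\Omega}v+(b_4)_-\int_{\Omega}v-(b_3)_+\int_{\Omega}u+(b_3)_-\int_{\Omega}u-\tfrac{\chi_2}{d_3}\lambda w\Big\}.
\end{align*}
For the upper bound \eqref{Asymp-coexist-eq-7} I would fix $\varepsilon>0$, work on $t\ge T_\varepsilon$, keep the logistic term $-\big(b_2-\frac{\chi_2 l}{d_3}\big)v^2$ intact, and estimate every remaining term from above using \eqref{Asym-eq-01}--\eqref{Asym-eq-02}: each nonnegative coefficient multiplying $u$, $v$, $\int_{\Omega}u$, $\int_{\Omega}v$, or $\lambda w$ is replaced by the appropriate $\overline{\cdot}$ or $\underline{\cdot}$ together with an $O(\varepsilon)$ error. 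Collecting the $\underline u,\overline u$ contributions reproduces exactly $M_1(\overline u,\underline u)$, while the $\underline v,\overline v$ contributions give $-\big(|\Omega|(b_4)_++\frac{\chi_2}{d_3}l\big)\underline v+|\Omega|(b_4)_-\overline v$.

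Having obtained a scalar logistic differential inequality $v_t-d_2\Delta v+\chi_2\nabla v\cdot\nabla w\le v\{\cdots\}$, I would introduce the ODE $\overline V_\varepsilon$ solving the associated logistic law with $\overline V_\varepsilon(T_\varepsilon)=\|v(\cdot,T_\varepsilon)\|_\infty$. Since {\bf (H1)} (and also \eqref{Asymp-coexist-eq-02}) forces $b_2-\frac{\chi_2 l}{d_3}>0$, this ODE is globally defined and its equilibrium is globally attracting; the parabolic comparison principle then yields $v(\cdot,t)\le\overline V_\varepsilon(t)$, whence $\overline v\le\lim_{t\to\infty}\overline V_\varepsilon(t)$, which equals the claimed quotient with an extra $+h\varepsilon$ in the numerator. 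Letting $\varepsilon\to0$ gives \eqref{Asymp-coexist-eq-7}. The lower bound \eqref{Asymp-coexist-eq-8} follows identically: I would reverse all inequalities, compare from below against the ODE started at $\min_{x\in\bar\Omega}v(x,T_\varepsilon)$, and observe that this swap of $\overline{\cdot}$ and $\underline{\cdot}$ turns $M_1$ into $M_2$.

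The main obstacle is purely the sign bookkeeping: one must track which production constant ($k$ attached to $u$, $l$ attached to $v$) lands on which term after $\Delta w$ is eliminated, and split each coefficient consistently into positive and negative parts so that, for instance, the $k\frac{\chi_2}{d_3}u$ piece arising from $-\chi_2 v\Delta w$ combines with $(b_1)_-u$ and with the $-\frac{\chi_2 k}{d_3}\lambda w$ contribution to produce precisely the $\overline u$- and $\underline u$-coefficients appearing in $M_1$. Once this accounting is verified, the comparison and limiting steps are routine and identical to those in Lemma~\ref{lem-asym-coexist-01}.
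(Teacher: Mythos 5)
Your proposal is correct and follows essentially the same route as the paper: eliminate $\Delta w$ via the elliptic equation, split coefficients into positive and negative parts, bound each term using the eventual estimates \eqref{Asym-eq-01}--\eqref{Asym-eq-02}, compare with the associated logistic ODE (whose positivity of the self-limitation constant $b_{2}-\tfrac{\chi_{2}l}{d_{3}}$ follows from {\bf (H1)}), and let $\varepsilon\to 0$. Your sign bookkeeping reproduces $M_1$ and $M_2$ exactly as in the paper's inequality \eqref{F06} and its counterpart, so no gap remains.
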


\begin{proof}
Since {\bf (H1)} holds, we have :
\begin{align}\label{F06}
&v_t-d_2\Delta v+\chi_2 \nabla v \cdot \nabla w\nonumber\\
& \leq v\left\{ b_0-(b_2-\frac{\chi_2 }{d_3}k)v-\left(|\Omega|(b_4)_++l\frac{\chi_2}{d_3}\right)\underline{v}+|\Omega|(b_4)_-\overline{v}-\left((b_1)_+ +|\Omega|(b_3)_++k\frac{\chi_{2}}{d_{3}}\right)\underline{u}\right\} \nonumber\\
&+v\left\{ \left((b_1)_- +|\Omega|(b_3)_-+k\frac{\chi_{2}}{d_{3}}\right)\overline{u}+\left(|b_1|+|\Omega|(|b_3|+|b_4|)+l\frac{\chi_2}{d_3}+2k\frac{\chi_{1}}{d_{3}}\right)\epsilon\right\}.
\end{align}
Observe that inequality \eqref{F06} is similar to inequality \eqref{F00}. Thus, using $b_2-\frac{\chi_2 }{d_3}k>0$ by {\bf (H1)}, similar arguments used to establish \eqref{F001} yield that
$$
\overline{v}\leq \frac{\left\{ b_0-(|\Omega|(b_4)_++\frac{\chi_2}{d_3}l)\underline{v}+|\Omega|(b_4)_-\overline{v}+M_1(\overline{u},\underline{u})+h\epsilon\right\}_{+}}{b_{2}-\frac{\chi_{2}l}{d_{3}}}
$$
for every $\varepsilon>0$. Letting $\varepsilon$ tends to 0 in the last inequality, we obtain \eqref{Asymp-coexist-eq-7}.

Similarly, since {\bf (H1)} holds, we have :
\begin{align}\label{F07}
&v_t-d_2\Delta v+\chi_2 \nabla v \cdot \nabla w\nonumber\\
& \geq v\left\{ b_0-(b_2-\frac{\chi_2 }{d_3}k)v-\left(|\Omega|(b_4)_++l\frac{\chi_2}{d_3}\right)\overline{v}+|\Omega|(b_4)_-\underline{v}-\left((b_1)_+ +|\Omega|(b_3)_++k\frac{\chi_{2}}{d_{3}}\right)\overline{u}\right\} \nonumber\\
&+v\left\{ \left((b_1)_- +|\Omega|(b_3)_-+k\frac{\chi_{2}}{d_{3}}\right)\underline{u}-\left(|b_1|+|\Omega|(|b_3|+|b_4|)+l\frac{\chi_2}{d_3}+2k\frac{\chi_{1}}{d_{3}}\right)\epsilon\right\}.
\end{align}
Observe that inequality \eqref{F07} is similar to inequality \eqref{F002}. Thus, using $b_2-\frac{\chi_2 }{d_3}k>0$ by {\bf (H1)}, similar arguments used to establish \eqref{F002} yield that
$$
\underline{v}\geq \frac{\left\{ b_0-(|\Omega|(b_4)_++\frac{\chi_2}{d_3}l)\overline{v}+|\Omega|(b_4)_-\underline{v}+M_2(\overline{u},\underline{u})-h\epsilon\right\}_{+}}{b_{2}-\frac{\chi_{2}l}{d_{3}}}
$$
for every $\varepsilon>0$. Letting $\varepsilon$ tends to 0 in the last inequality, we obtain \eqref{Asymp-coexist-eq-8}.
\end{proof}

\begin{lemma}
\label{lem-asym-coexist-03}
Suppose {\bf (H1)} holds and equations \eqref{Asymp-coexist-eq-01}, \eqref{Asymp-coexist-eq-02}, and \eqref{Asymp-coexist-eq-03}. Then
\begin{align}
\label{Asymp-coexist-eq-9}
&a_0-\left(|\Omega|(a_3)_++\frac{\chi_1}{d_3}k\right)\underline{u}+|\Omega|(a_3)_-\overline{u}-\left((a_2)_+ +|\Omega|(a_4)_++l\frac{\chi_{1}}{d_{3}}\right)\underline{v} \nonumber\\
&+\left((a_2)_- +|\Omega|(a_4)_-+l\frac{\chi_{1}}{d_{3}}\right)\overline{v}\geq 0
\end{align}
and

\begin{align}\label{Asymp-coexist-eq-10}
& b_0-\left(|\Omega|(b_4)_++\frac{\chi_2}{d_3}l\right)\underline{v}+|\Omega|(b_4)_-\overline{v}-\left((b_1)_+ +|\Omega|(b_3)_++k\frac{\chi_{2}}{d_{3}}\right)\underline{u} \nonumber\\
&+\left((b_1)_- +|\Omega|(b_3)_-+k\frac{\chi_{2}}{d_{3}}\right)\overline{u}\geq 0.
 \end{align}

\end{lemma}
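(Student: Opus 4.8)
The plan is to notice that the left-hand side of \eqref{Asymp-coexist-eq-9} is exactly the quantity sitting inside the positive part in the numerator of the upper bound \eqref{Asymp-coexist-eq-5} for $\overline{u}$ (call it $N_u$), and similarly the left-hand side of \eqref{Asymp-coexist-eq-10} is the quantity $N_v$ inside the positive part of \eqref{Asymp-coexist-eq-7}. Hence the two claims are equivalent to $N_u\geq 0$ and $N_v\geq 0$, and I would prove each by contradiction. Suppose $N_u<0$. Since the denominator $a_1-\frac{\chi_1 k}{d_3}$ is positive by {\bf (H1)}, the positive part in \eqref{Asymp-coexist-eq-5} vanishes, so $\overline{u}\leq 0$ and therefore $\overline{u}=\underline{u}=0$; that is, species $u$ goes extinct. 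The goal is to show that this is incompatible with \eqref{Asymp-coexist-eq-02} and the left inequality of \eqref{Asymp-coexist-eq-03}.

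First I would check that $v$ then persists. Setting $\overline{u}=\underline{u}=0$ annihilates $M_1(\overline{u},\underline{u})$ and $M_2(\overline{u},\underline{u})$ in Lemma \ref{lem-asym-coexist-02}; if in addition $\overline{v}=0$ then \eqref{Asymp-coexist-eq-8} would give $\underline{v}\geq \frac{b_0}{b_2-\chi_2 l/d_3}>0$, contradicting $\underline{v}\leq\overline{v}=0$. So $\overline{v}>0$, which lets me discard the positive parts in \eqref{Asymp-coexist-eq-7}--\eqref{Asymp-coexist-eq-8}. Writing $B=b_2-\frac{\chi_2 l}{d_3}-|\Omega|(b_4)_-$ and $C=|\Omega|(b_4)_++\frac{\chi_2 l}{d_3}$, the two inequalities become $\overline{v}B+C\underline{v}\leq b_0\leq \underline{v}B+C\overline{v}$. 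Subtracting yields $(\overline{v}-\underline{v})(B-C)\leq 0$, and since $B-C=b_2-2\frac{\chi_2 l}{d_3}-|\Omega||b_4|>0$ by \eqref{Asymp-coexist-eq-02} while $\overline{v}\geq\underline{v}$ trivially, I obtain $\overline{v}=\underline{v}=:V$; feeding this back collapses the system to $V(B+C)=b_0$, i.e. $V=\frac{b_0}{b_2+|\Omega|b_4}$.

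Finally I would substitute $\overline{u}=\underline{u}=0$ and $\overline{v}=\underline{v}=V$ into $N_u$. The $l\chi_1/d_3$ contributions cancel and, using $(a_2)_- -(a_2)_+=-a_2$ and $(a_4)_- -(a_4)_+=-a_4$, the expression collapses to
\[
N_u=a_0-V\bigl(a_2+|\Omega|a_4\bigr)=a_0-\frac{b_0\,(a_2+|\Omega|a_4)}{b_2+|\Omega|b_4},
\]
which is strictly positive by the left inequality of \eqref{Asymp-coexist-eq-03}. This contradicts $N_u<0$ and establishes \eqref{Asymp-coexist-eq-9}. The statement \eqref{Asymp-coexist-eq-10} follows by the mirror-image argument: assuming $N_v<0$ forces $\overline{v}=\underline{v}=0$, Lemma \ref{lem-asym-coexist-01} together with \eqref{Asymp-coexist-eq-01} pins $\overline{u}=\underline{u}=\frac{a_0}{a_1+|\Omega|a_3}$, and the right inequality of \eqref{Asymp-coexist-eq-03} then yields $N_v>0$, a contradiction.

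I expect the only delicate steps to be the sign bookkeeping with the $(\cdot)_\pm$ decompositions and the justification that the positive parts may be dropped, which is precisely the persistence/pinning of the surviving species controlled by \eqref{Asymp-coexist-eq-01}--\eqref{Asymp-coexist-eq-02}; once one species is assumed extinct the rest is forced, so the genuine content lies in that pinning step and in the final sign coming from \eqref{Asymp-coexist-eq-03}.
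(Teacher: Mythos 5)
Your proof is correct and follows essentially the same route as the paper's: contradiction via Lemmas \ref{lem-asym-coexist-01} and \ref{lem-asym-coexist-02}, extinction of one species forcing the other to be pinned at its single-species equilibrium, and a final contradiction with \eqref{Asymp-coexist-eq-03}. The only (minor, and arguably cleaner) difference is organizational: the paper runs a three-case analysis on which of the two inequalities fail and uses the surviving inequality to drop the positive part in \eqref{Asymp-coexist-eq-7} (resp. \eqref{Asymp-coexist-eq-5}), whereas you decouple the two claims entirely by first showing $\overline{v}>0$ (resp. $\overline{u}>0$) directly from the lower bound \eqref{Asymp-coexist-eq-8} (resp. \eqref{Asymp-coexist-eq-6}), which justifies discarding the positive parts without any assumption on the other inequality and makes the paper's Case I vacuous.
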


\begin{proof} Suppose by contradiction that
\begin{align*}
&a_0-\left(|\Omega|(a_3)_++\frac{\chi_1}{d_3}k\right)\underline{u}+|\Omega|(a_3)_-\overline{u}-\left((a_2)_+ +|\Omega|(a_4)_++l\frac{\chi_{1}}{d_{3}}\right)\underline{v} \nonumber\\
&+\left((a_2)_- +|\Omega|(a_4)_-+l\frac{\chi_{1}}{d_{3}}\right)\overline{v}< 0
\end{align*}
or
\begin{align*}
& b_0-\left(|\Omega|(b_4)_++\frac{\chi_2}{d_3}l\right)\underline{v}+|\Omega|(b_4)_-\overline{v}-\left((b_1)_+ +|\Omega|(b_3)_++k\frac{\chi_{2}}{d_{3}}\right)\underline{u} \nonumber\\
&+\left((b_1)_- +|\Omega|(b_3)_-+k\frac{\chi_{2}}{d_{3}}\right)\overline{u}< 0,
 \end{align*}
and the proof is divided into three cases

\noindent {\bf Case I.}
\begin{align*}
&a_0-\left(|\Omega|(a_3)_++\frac{\chi_1}{d_3}k\right)\underline{u}+|\Omega|(a_3)_-\overline{u}-\left((a_2)_+ +|\Omega|(a_4)_++l\frac{\chi_{1}}{d_{3}}\right)\underline{v} \nonumber\\
&+\left((a_2)_- +|\Omega|(a_4)_-+l\frac{\chi_{1}}{d_{3}}\right)\overline{v}< 0
\end{align*}
and
\begin{align*}
& b_0-\left(|\Omega|(b_4)_++\frac{\chi_2}{d_3}l\right)\underline{v}+|\Omega|(b_4)_-\overline{v}-\left((b_1)_+ +|\Omega|(b_3)_++k\frac{\chi_{2}}{d_{3}}\right)\underline{u} \nonumber\\
&+\left((b_1)_- +|\Omega|(b_3)_-+k\frac{\chi_{2}}{d_{3}}\right)\overline{u}< 0.
 \end{align*}
Then, it follows from Lemmas \ref{lem-asym-coexist-01} and \ref{lem-asym-coexist-02} that  $\underline{u}=\overline{u}=\underline{v}=\overline{v}=0$. Inserting these values in the last two inequalities, we obtain that $\max\{a_{0},b_{0}\}< 0$, which is a contradiction.

\medskip

\noindent {\bf Case II.}
\begin{align*}
&a_0-\left(|\Omega|(a_3)_++\frac{\chi_1}{d_3}k\right)\underline{u}+|\Omega|(a_3)_-\overline{u}-\left((a_2)_+ +|\Omega|(a_4)_++l\frac{\chi_{1}}{d_{3}}\right)\underline{v} \nonumber\\
&+\left((a_2)_- +|\Omega|(a_4)_-+l\frac{\chi_{1}}{d_{3}}\right)\overline{v}\geq  0
\end{align*}
and
\begin{align*}
& b_0-\left(|\Omega|(b_4)_++\frac{\chi_2}{d_3}l\right)\underline{v}+|\Omega|(b_4)_-\overline{v}-\left((b_1)_+ +|\Omega|(b_3)_++k\frac{\chi_{2}}{d_{3}}\right)\underline{u} \nonumber\\
&+\left((b_1)_- +|\Omega|(b_3)_-+k\frac{\chi_{2}}{d_{3}}\right)\overline{u}< 0.
 \end{align*}
Then by Lemma \ref{lem-asym-coexist-02}, we get  $\underline{v}=\overline{v}=0$. By Lemma \ref{lem-asym-coexist-01} we have that
$$
(\overline{u}-\underline{u})(a_1-\frac{\chi_1 }{d_3}k) \leq (\frac{\chi_1 }{d_3}k+|\Omega||a_3|)(\overline{u}-\underline{u}).
$$
Thus, inequality \eqref{Asymp-coexist-eq-01} implies that $\overline{u}=\underline{u}$. Next, solving for $\overline{u}$ in \eqref{Asymp-coexist-eq-5} and \eqref{Asymp-coexist-eq-6}, we obtain that \begin{equation}\label{F08}
\underline{u}=\overline{u} =\frac{a_0}{a_1+|\Omega|a_3}.
\end{equation}
 Finally combining inequality \eqref{Asymp-coexist-eq-8} with the fact that $\underline{v}=\overline{v}=0$, we obtain
$$
0\geq    b_0- (b_{1}+b_{3}|\Omega|)\overline{u},
$$
 which is equivalent to
$$
\underline{u}\geq \frac{b_{0}}{b_1+b_3|\Omega|}.
$$
The last inequality combined with \eqref{F08} yield that
$$
\frac{a_0}{a_1+|\Omega|a_3}\geq \frac{b_{0}}{b_1+b_3|\Omega|}.
$$
This contradicts inequality \eqref{Asymp-coexist-eq-03}.

\medskip

\noindent {\bf Case III.}
\begin{align*}
&a_0-\left(|\Omega|(a_3)_++\frac{\chi_1}{d_3}k\right)\underline{u}+|\Omega|(a_3)_-\overline{u}-\left((a_2)_+ +|\Omega|(a_4)_++l\frac{\chi_{1}}{d_{3}}\right)\underline{v} \nonumber\\
&+\left((a_2)_- +|\Omega|(a_4)_-+l\frac{\chi_{1}}{d_{3}}\right)\overline{v}<  0
\end{align*}
and
\begin{align*}
& b_0-\left(|\Omega|(b_4)_++\frac{\chi_2}{d_3}l\right)\underline{v}+|\Omega|(b_4)_-\overline{v}-\left((b_1)_+ +|\Omega|(b_3)_++k\frac{\chi_{2}}{d_{3}}\right)\underline{u} \nonumber\\
&+\left((b_1)_- +|\Omega|(b_3)_-+k\frac{\chi_{2}}{d_{3}}\right)\overline{u}\geq 0.
 \end{align*}
Then by Lemma \ref{lem-asym-coexist-01}, we get  $\underline{u}=\overline{u}=0$. By Lemma \ref{lem-asym-coexist-02} we have that
$$
(\overline{v}-\underline{v})(b_2-\frac{\chi_2 }{d_3}l) \leq (\frac{\chi_2 }{d_3}l+|\Omega||b_4|)(\overline{v}-\underline{v}).
$$
Thus, inequality \eqref{Asymp-coexist-eq-03} implies that  $\overline{v}=\underline{v}$. Next solving for $\overline{v}$ in \eqref{Asymp-coexist-eq-7} and \eqref{Asymp-coexist-eq-8}, we obtain that
\begin{equation}\label{F09}\underline{v}=\overline{v} =\frac{b_0}{b_2+|\Omega|b_4}.\end{equation}
 Finally, combining inequality \eqref{Asymp-coexist-eq-6} with the fact that $\underline{u}=\overline{u}=0$, we obtain
$$
0 \geq   a_0- (a_{2}+a_{4}|\Omega|)\overline{v}.
$$
Which is equivalent to
$$
\overline{v}\geq \frac{a_0}{a_{2}+a_4|\Omega|}.
$$
This contradicts inequality \eqref{Asymp-coexist-eq-03}.
\end{proof}

Now, based on these lemmas, we complete the proof of Theorem \ref{Asym-Th-1}.

\begin{proof}[Proof of Theorem \ref{Asym-Th-1}]
From lemmas \ref {lem-asym-coexist-01},\ref {lem-asym-coexist-02} and \ref{lem-asym-coexist-03}, we get:
$$
(\overline{u}-\underline{u})\Big(a_1-2\frac{\chi_1}{d_3}k-|\Omega||a_3|\Big)\leq (|a_2|+|\Omega||a_4|+l\frac{\chi_1}{d_3})(\overline{v}-\underline{v}),
$$
and
\begin{equation}\label{Asym-eq-13}
(\overline{v}-\underline{v})\Big(b_2-2\frac{\chi_2}{d_3}l-|\Omega||b_4|\Big)\leq (|b_1|+|\Omega||b_3|+k\frac{\chi_2}{d_3})(\overline{u}-\underline{u}).
\end{equation}
Thus,
$$
(\overline{u}-\underline{u})\Big(a_1-2\frac{\chi_1}{d_3}k-|\Omega||a_3|\Big)\Big(b_2-2\frac{\chi_2}{d_3}l-|\Omega||b_4|d\Big)\leq (|a_2|+|\Omega||a_4|+l\frac{\chi_1}{d_3})(|b_1|+|\Omega||b_3|+k\frac{\chi_2}{d_3})(\overline{u}-\underline{u}).
$$
Last inequality combined with \eqref{Asymp-coexist-eq-4} yield that $\underline{u}=\overline{u}$. Hence, using again inequality \eqref{Asym-eq-13}, we obtain that $\underline{v}=\overline{v}$. Therefore from lemmas  \ref {lem-asym-coexist-01} and \ref{lem-asym-coexist-03} we get
$$
\underline{u}=\overline{u}=\frac{a_0-(a_2+|\Omega|a_4)\overline{v}}{a_1+|\Omega|a_3},
$$
and  from lemmas  \ref {lem-asym-coexist-02} and \ref{lem-asym-coexist-03} we get
$$
\underline{v}=\overline{v}=\frac{b_0-(b_1+|\Omega|b_3)\overline{u}}{b_2+|\Omega|b_4},
$$
and the results follow.
\end{proof}

\subsection{Exclusion}
In this subsection, our aim is to find also conditions on the parameters only which guarantee that  $\underline{u}(u_0,v_0)=\overline{u}(u_0,v_0)=0$ and  $0<\underline{v}(u_0,v_0)=\overline{v}(u_0,v_0)=\frac{b_0}{b_2+|\Omega|b_4}$.  This method is the so called eventual comparison method. 

 Let $u_{0},v_0\in C(\overline{\Omega})$ be given  nonnegative initials such that $0<\|v_0\|_{\infty}$. Since $0<\|v_0\|_{\infty}$, the maximum principle for parabolic equations implies that
 $$0<\|v(\cdot,t;u_0,v_0)\|_{\infty},\quad \forall\ t\geq 0. $$
Next, we prove the following four lemmas which are important steps toward proving the phenomenon of exclusion.
\begin{lemma}
\label{lem-asym-exclusion-01}


Suppose $a_1>\frac{\chi_{1}k}{d_{3}},$ $a_2\geq\frac{\chi_1l}{d_3}$, $a_4\geq 0,$ and  {\bf (H1)}. Then
\begin{equation}\label{Asymp-exclusion-eq-000}
\overline{u}\leq \frac{\left\{ a_0-\left(a_2 +|\Omega|a_4\right)\underline{v}+|\Omega|(a_3)_-\overline{u}\right\}_{+}}{a_{1}-\frac{\chi_{1}k}{d_{3}}}.
\end{equation}
\end{lemma}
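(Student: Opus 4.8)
The plan is to reproduce, in a streamlined form, the comparison-with-a-logistic-ODE argument already carried out for Lemma~\ref{lem-asym-coexist-01}, taking advantage of the extra hypotheses $a_2\geq\frac{\chi_1l}{d_3}$ and $a_4\geq 0$ to merge every $v$-dependent contribution into a single term proportional to $\underline v$. First I would start from the reformulation of the $u$-equation obtained by substituting the third equation of \eqref{u-v-w-eq1} into $-\chi_1\nabla\cdot(u\nabla w)$, exactly as in the derivation of \eqref{F00}, which gives for $t\ge T_\varepsilon$
\[
u_t-d_1\Delta u+\chi_1\nabla u\cdot\nabla w=u\Big[a_0-\big(a_1-\tfrac{\chi_1k}{d_3}\big)u+\big(\tfrac{\chi_1l}{d_3}-a_2\big)v-\tfrac{\chi_1}{d_3}\lambda w-a_3\!\int_\Omega u-a_4\!\int_\Omega v\Big].
\]
Because $a_2\geq\frac{\chi_1l}{d_3}$ the coefficient $\frac{\chi_1l}{d_3}-a_2$ is nonpositive, so the lower bound $v\ge\underline v-\varepsilon$ from \eqref{Asym-eq-01} yields $\big(\frac{\chi_1l}{d_3}-a_2\big)v\le\big(\frac{\chi_1l}{d_3}-a_2\big)(\underline v-\varepsilon)$; likewise $a_4\ge0$ together with $\int_\Omega v\ge|\Omega|(\underline v-\varepsilon)$ gives $-a_4\int_\Omega v\le-a_4|\Omega|(\underline v-\varepsilon)$.

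Next I would absorb the chemical term through \eqref{Asym-eq-02}, namely $-\frac{\chi_1}{d_3}\lambda w\le-\frac{\chi_1}{d_3}(k\underline u+l\underline v)+\frac{\chi_1}{d_3}(k+l)\varepsilon$, and split $-a_3\int_\Omega u=-(a_3)_+\int_\Omega u+(a_3)_-\int_\Omega u$, bounding $(a_3)_-\int_\Omega u\le(a_3)_-|\Omega|(\overline u+\varepsilon)$. The decisive bookkeeping is that the $\frac{\chi_1l}{d_3}\underline v$ produced from $\big(\frac{\chi_1l}{d_3}-a_2\big)\underline v$ cancels against the $-\frac{\chi_1l}{d_3}\underline v$ coming from the $w$-term, so the combined $\underline v$-coefficient collapses to exactly $-(a_2+|\Omega|a_4)$. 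The two surviving terms $-\frac{\chi_1k}{d_3}\underline u$ and $-(a_3)_+|\Omega|\underline u$ are nonpositive multiples of the constants $\underline u\ge0$; since they appear inside a bracket that multiplies $u\ge0$, they may simply be discarded to enlarge the right-hand side. This produces, with a constant $h>0$ collecting all $\varepsilon$-coefficients, the scalar differential inequality
\[
u_t-d_1\Delta u+\chi_1\nabla u\cdot\nabla w\le u\Big[a_0-\big(a_1-\tfrac{\chi_1k}{d_3}\big)u-(a_2+|\Omega|a_4)\underline v+|\Omega|(a_3)_-\overline u+h\varepsilon\Big],\qquad t\ge T_\varepsilon.
\]

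Finally I would compare $u$ with the solution $\overline U_\varepsilon$ of the logistic ODE whose right-hand side is the bracketed expression with $u$ replaced by $\overline U_\varepsilon$ and with $\overline U_\varepsilon(T_\varepsilon)=\|u(\cdot,T_\varepsilon)\|_\infty$; the parabolic comparison principle then gives $u(\cdot,t)\le\overline U_\varepsilon(t)$ for $t\ge T_\varepsilon$. Since $a_1-\frac{\chi_1k}{d_3}>0$ by hypothesis, this equation is globally defined and its solution converges to the nonnegative equilibrium $\frac{\{a_0-(a_2+|\Omega|a_4)\underline v+|\Omega|(a_3)_-\overline u+h\varepsilon\}_+}{a_1-\frac{\chi_1k}{d_3}}$; consequently $\overline u\le\limsup_{t\to\infty}\overline U_\varepsilon(t)$ equals this value, and letting $\varepsilon\to0$ yields \eqref{Asymp-exclusion-eq-000}. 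The main point to get right is the sign/cancellation bookkeeping in the middle step: one must check that the nonpositivity of $\frac{\chi_1l}{d_3}-a_2$, forced precisely by $a_2\ge\frac{\chi_1l}{d_3}$, is what allows the \emph{lower} bound on $v$ to be used in an \emph{upper} estimate, and that the discarded $\underline u$-terms are genuinely nonpositive; once these are verified, the remainder is the routine comparison machinery already invoked for Lemma~\ref{lem-asym-coexist-01}.
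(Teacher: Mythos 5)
Your proposal is correct and follows essentially the same route as the paper: the same substitution of the elliptic equation into the cross-diffusion term, the same use of $a_2\geq\frac{\chi_1 l}{d_3}$ and $a_4\geq 0$ to turn the lower bounds on $v$ and $\int_\Omega v$ into an upper estimate with combined coefficient $-(a_2+|\Omega|a_4)\underline{v}$, and the same logistic-ODE comparison followed by $\varepsilon\to 0$. The only difference is that you spell out the cancellation and sign bookkeeping that the paper leaves implicit when it invokes the arguments of Lemma \ref{lem-asym-coexist-01}.
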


\begin{proof}

Using inequality \eqref{Asym-eq-02} and the fact that $a_2\geq \frac{\chi_1l}{d_3}$, we have
\begin{align*}
&u_t-d_1\Delta u+\chi_1 \nabla u \cdot \nabla w\nonumber\\
& \leq u\left\{ a_0-(a_1-\frac{\chi_1 }{d_3}k)u+|\Omega|(a_3)_-\overline{u}-\left(a_2 +|\Omega|a_4\right)\underline{v}\right\} \nonumber\\
&+u\left\{ \left(a_2+|\Omega|((a_3)_-+a_4)+k\frac{\chi_1}{d_3}+2l\frac{\chi_{1}}{d_{3}}\right)\epsilon\right\},
\end{align*}
and thus since $a_1>\frac{\chi_{1}k}{d_{3}},$ \eqref{Asymp-exclusion-eq-000} follows from  similar arguments as of  \eqref{Asymp-coexist-eq-5}  in Lemma \ref{lem-asym-coexist-01}
\end{proof}

\begin{lemma}
\label{lem-asym-exclusion-02}
Suppose  $b_2>\frac{\chi_{2}l}{d_{3}},$  $b_1> \frac{\chi_2k}{d_3},$  and  {\bf (H1)}. Then

\begin{equation}\label{Asymp-exclusion-eq-04}
\overline{v}\leq \frac{\left\{ b_0+|\Omega|(b_3)_-\overline{u}-\left(|\Omega|(b_4)_++\frac{\chi_{2}l}{d_{3}}\right)\underline{v}+|\Omega|(b_4)_-\overline{v}\right\}_{+}}{b_{2}-\frac{\chi_{2}l}{d_{3}}},
\end{equation}
and
\begin{equation}\label{Asymp-exclusion-eq-05}
\underline{v}\geq \frac{\left\{ b_0-(b_1+|\Omega|(b_3)_+)\overline{u}-\left(|\Omega|(b_4)_++\frac{\chi_{2}l}{d_{3}}\right)\overline{v}+|\Omega|(b_4)_-\underline{v}\right\}_{+}}{b_{2}-\frac{\chi_{2}l}{d_{3}}}.
\end{equation}
\end{lemma}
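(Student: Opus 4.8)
The plan is to mimic the argument already carried out for the $v$-component in Lemma \ref{lem-asym-coexist-02} (and for the $u$-component in Lemma \ref{lem-asym-exclusion-01}), the only genuinely new ingredient being the sign information supplied by the hypothesis $b_1>\frac{\chi_2 k}{d_3}$. First I would eliminate $\Delta w$ from the $v$-equation of \eqref{u-v-w-eq1} by means of the elliptic equation $d_3\Delta w=\lambda w-ku-lv$, keeping $\chi_2\nabla v\cdot\nabla w$ on the left and moving $-\chi_2 v\Delta w$ to the right. Since $-\chi_2 v\Delta w=-\tfrac{\chi_2\lambda}{d_3}vw+\tfrac{\chi_2 k}{d_3}uv+\tfrac{\chi_2 l}{d_3}v^2$, this recasts the evolution of $v$ as
\begin{equation*}
v_t-d_2\Delta v+\chi_2\nabla v\cdot\nabla w=v\Big(b_0-\big(b_1-\tfrac{\chi_2 k}{d_3}\big)u-\big(b_2-\tfrac{\chi_2 l}{d_3}\big)v-b_3\!\int_\Omega u-b_4\!\int_\Omega v-\tfrac{\chi_2\lambda}{d_3}w\Big),
\end{equation*}
so the diagonal coefficient becomes $b_2-\frac{\chi_2 l}{d_3}$, which is positive by hypothesis and will play the role of the logistic rate appearing in the denominators of \eqref{Asymp-exclusion-eq-04}--\eqref{Asymp-exclusion-eq-05}.

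For the upper bound \eqref{Asymp-exclusion-eq-04} I would, for $t\geq T_\epsilon$, insert the two-sided bounds \eqref{Asym-eq-01} and \eqref{Asym-eq-02}. The decisive simplification is that, because $b_1>\frac{\chi_2 k}{d_3}$ and $u\geq0$, the cross term $-\big(b_1-\frac{\chi_2 k}{d_3}\big)u$ is nonpositive and may simply be discarded; likewise the nonpositive $\underline u$-contributions coming from $-(b_3)_+\int_\Omega u$ and from the lower bound on $-\frac{\chi_2\lambda}{d_3}w$ may be dropped, since removing nonpositive terms only enlarges the right-hand side and the inequality is preserved as $v\geq0$. Splitting $b_3,b_4$ into positive and negative parts and bounding $w$ below via \eqref{Asym-eq-02}, this produces $v_t-d_2\Delta v+\chi_2\nabla v\cdot\nabla w\leq v\{b_0-(b_2-\frac{\chi_2 l}{d_3})v+C_\epsilon^+\}$ whose constant $C_\epsilon^+$ tends to $|\Omega|(b_3)_-\overline u-(|\Omega|(b_4)_++\frac{\chi_2 l}{d_3})\underline v+|\Omega|(b_4)_-\overline v$ as $\epsilon\to0$. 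Comparing $v$ with the spatially constant solution of the associated logistic ODE (on which diffusion and chemotaxis vanish identically, so the parabolic comparison principle applies exactly as in \eqref{F01}--\eqref{F02}), letting $t\to\infty$ and then $\epsilon\to0$, yields \eqref{Asymp-exclusion-eq-04}.

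The lower bound \eqref{Asymp-exclusion-eq-05} follows the same template with the inequalities reversed. Now the cross term must be retained and is controlled by $-\big(b_1-\frac{\chi_2 k}{d_3}\big)u\geq-\big(b_1-\frac{\chi_2 k}{d_3}\big)(\overline u+\epsilon)$, the nonnegative part $(b_3)_-\int_\Omega u$ is discarded, and $w$ is bounded above. Collecting the three $\overline u$-contributions, $-(b_1-\frac{\chi_2 k}{d_3})-|\Omega|(b_3)_+-\frac{\chi_2 k}{d_3}$, the two copies of $\frac{\chi_2 k}{d_3}$ cancel and leave exactly $-(b_1+|\Omega|(b_3)_+)\overline u$. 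Comparing $v$ with the subsolution solving the corresponding logistic ODE, whose limit as $t\to\infty$ is $\{b_0-(b_1+|\Omega|(b_3)_+)\overline u-(|\Omega|(b_4)_++\frac{\chi_2 l}{d_3})\overline v+|\Omega|(b_4)_-\underline v\}_+/(b_2-\frac{\chi_2 l}{d_3})$, and sending $\epsilon\to0$, gives \eqref{Asymp-exclusion-eq-05}.

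The comparison machinery itself is routine, being identical to that of Lemmas \ref{lem-asym-coexist-01}--\ref{lem-asym-coexist-02}. I expect the only delicate point to be the sign bookkeeping: tracking which terms are nonpositive and hence discardable in the upper-bound estimate, versus those that genuinely contribute and must be absorbed into $\overline u$ or $\overline v$, and verifying in each case that discarding a term weakens the bound in the admissible direction. This is elementary but is where an error would most easily creep in.
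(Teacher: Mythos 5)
Your proposal is correct and follows essentially the same route as the paper: substitute $d_3\Delta w=\lambda w-ku-lv$ into the $v$-equation, use \eqref{Asym-eq-01}--\eqref{Asym-eq-02} to bound the cross and nonlocal terms (dropping the nonpositive term $-(b_1-\tfrac{\chi_2k}{d_3})u$ for the upper bound and absorbing it into $-(b_1+|\Omega|(b_3)_+)\overline{u}$ for the lower bound), then compare with the logistic ODE and let $\epsilon\to 0$. Your sign bookkeeping, including the cancellation of the two $\tfrac{\chi_2 k}{d_3}$ contributions in the lower bound, matches the paper's displayed inequalities exactly.
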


\begin{proof}
Using inequality \eqref{Asym-eq-02} and the fact that $b_1\geq \frac{\chi_2k}{d_3}$, we have :
\begin{align*}
&v_t-d_2\Delta v+\chi_2 \nabla v \cdot \nabla w\nonumber\\
& \leq v\left\{ b_0-(b_2-\frac{\chi_2 }{d_3}k)v+|\Omega|(b_3)_-\overline{u}-\left(|\Omega|(b_4)_++l\frac{\chi_2}{d_3}\right)\underline{v}+|\Omega|(b_4)_-\overline{v}\right\} \nonumber\\
&+v\left\{ \left(|\Omega|((b_3)_-+|b_4|)+l\frac{\chi_2}{d_3}+k\frac{\chi_{1}}{d_{3}}\right)\epsilon\right\},
\end{align*}
and since $b_2> \frac{\chi_2l}{d_3},$  \eqref{Asymp-exclusion-eq-04} follows from  similar arguments as of  \eqref{Asymp-coexist-eq-7}  in Lemma \ref{lem-asym-coexist-02}.

Similarly, we have
\begin{align*}
&v_t-d_2\Delta v+\chi_2 \nabla v \cdot \nabla w\nonumber\\
& \geq v\left\{ b_0-(b_2-\frac{\chi_2 }{d_3}k)v-(b_1+|\Omega|(b_3)_+)\overline{u}-\left(|\Omega|(b_4)_++l\frac{\chi_2}{d_3}\right)\overline{v}+|\Omega|(b_4)_-\underline{v}\right\} \nonumber\\
&-v\left\{ \left(b_1+|\Omega|((b_3)_-+|b_4|)+l\frac{\chi_2}{d_3}+2k\frac{\chi_{1}}{d_{3}}\right)\epsilon\right\},
\end{align*}
and since $b_2> \frac{\chi_2l}{d_3},$  \eqref{Asymp-exclusion-eq-05} follows from  similar arguments as of  \eqref{Asymp-coexist-eq-8}  in Lemma \ref{lem-asym-coexist-02}.
\end{proof}

\begin{lemma}
\label{lem-asym-exclusion-03}
Suppose  $b_2>\frac{\chi_{2}l}{d_{3}},$   $b_1\leq  \frac{\chi_2k}{d_3},$  and  {\bf (H1)}. Then
\begin{equation}\label{Asymp-exclusion-eq-06}
\overline{v}\leq \frac{\left\{ b_0+\left(\frac{\chi_{2}k}{d_{3}}-b_1+|\Omega|(b_3)_-\right)\overline{u}-\left(|\Omega|(b_4)_++\frac{\chi_{2}l}{d_{3}}\right)\underline{v}+|\Omega|(b_4)_-\overline{v}\right\}_{+}}{b_{2}-\frac{\chi_{2}l}{d_{3}}},
\end{equation}
and
\begin{equation}\label{Asymp-exclusion-eq-07}
\underline{v}\geq \frac{\left\{ b_0-(\frac{\chi_{2}k}{d_{3}}+|\Omega|(b_3)_+)\overline{u}-\left(|\Omega|(b_4)_++\frac{\chi_{2}l}{d_{3}}\right)\overline{v}+|\Omega|(b_4)_-\underline{v}\right\}_{+}}{b_{2}-\frac{\chi_{2}l}{d_{3}}}.
\end{equation}
\end{lemma}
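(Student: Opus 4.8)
The plan is to follow the template of Lemma \ref{lem-asym-exclusion-02} almost verbatim, the only genuine change being the sign of the combined chemotaxis--competition coefficient of $u$. Starting from the second equation of \eqref{u-v-w-eq1} and eliminating $\Delta w$ through the third equation (i.e. using $d_3\Delta w=\lambda w-ku-lv$), one rewrites the $v$-equation as
\begin{equation*}
v_t-d_2\Delta v+\chi_2\nabla v\cdot\nabla w= v\Big[b_0-\big(b_2-\tfrac{\chi_2 l}{d_3}\big)v+\big(\tfrac{\chi_2 k}{d_3}-b_1\big)u-b_3\!\int_\Omega u-b_4\!\int_\Omega v-\tfrac{\chi_2\lambda}{d_3}w\Big].
\end{equation*}
Here the hypothesis $b_2>\frac{\chi_2 l}{d_3}$ makes the leading coefficient $-(b_2-\frac{\chi_2 l}{d_3})$ negative, so that the scalar comparison equation built from the bracket is a genuine logistic ODE, while the new hypothesis $b_1\le\frac{\chi_2 k}{d_3}$ forces $\frac{\chi_2 k}{d_3}-b_1\ge0$; this is precisely the feature that distinguishes the present lemma from Lemma \ref{lem-asym-exclusion-02}.

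For the upper estimate \eqref{Asymp-exclusion-eq-06} I would bound the bracket from above on $\{t>T_\varepsilon\}$ using \eqref{Asym-eq-01}--\eqref{Asym-eq-02}: keep the logistic term $-(b_2-\frac{\chi_2 l}{d_3})v$; since $\frac{\chi_2 k}{d_3}-b_1\ge0$, replace $u$ by its upper bound $\overline u+\varepsilon$, which produces the term $(\frac{\chi_2 k}{d_3}-b_1)\overline u$; split $-b_3\int_\Omega u=-(b_3)_+\int_\Omega u+(b_3)_-\int_\Omega u$ and discard the nonpositive first piece while bounding the second by $(b_3)_-|\Omega|(\overline u+\varepsilon)$; treat $-b_4\int_\Omega v$ by $-(b_4)_+|\Omega|(\underline v-\varepsilon)+(b_4)_-|\Omega|(\overline v+\varepsilon)$; and estimate $-\frac{\chi_2\lambda}{d_3}w$ from above via the lower bound in \eqref{Asym-eq-02}, namely by $-\frac{\chi_2 k}{d_3}\underline u-\frac{\chi_2 l}{d_3}\underline v+O(\varepsilon)$, after which the term $-\frac{\chi_2 k}{d_3}\underline u\le0$ (because $\underline u\ge0$) is simply dropped. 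Collecting the surviving terms gives a differential inequality $v_t-d_2\Delta v+\chi_2\nabla v\cdot\nabla w\le v\,[\cdots]$ whose right-hand coefficients are exactly those of \eqref{Asymp-exclusion-eq-06} plus an $O(\varepsilon)$ error. One then compares $v$ with the solution $\overline V_\varepsilon$ of the associated logistic ODE (as in \eqref{F01}), invokes the parabolic comparison principle to get $v(\cdot,t)\le\overline V_\varepsilon(t)$, lets $t\to\infty$ to identify $\lim\overline V_\varepsilon$ with the asserted fixed point, and finally lets $\varepsilon\to0$; this is word-for-word the argument establishing \eqref{Asymp-coexist-eq-7} in Lemma \ref{lem-asym-coexist-02}.

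For the lower estimate \eqref{Asymp-exclusion-eq-07} I would bound the same bracket from below: estimate $-\frac{\chi_2\lambda}{d_3}w$ from below through the upper bound in \eqref{Asym-eq-02}, giving $-\frac{\chi_2 k}{d_3}\overline u-\frac{\chi_2 l}{d_3}\overline v-O(\varepsilon)$; bound $-(b_3)_+\int_\Omega u$ below by $-(b_3)_+|\Omega|(\overline u+\varepsilon)$ and $-(b_4)_+\int_\Omega v$ below by $-(b_4)_+|\Omega|(\overline v+\varepsilon)$, while keeping $(b_4)_-\int_\Omega v\ge(b_4)_-|\Omega|(\underline v-\varepsilon)$; and discard the nonnegative leftovers $(\frac{\chi_2 k}{d_3}-b_1)\underline u\ge0$ and $(b_3)_-|\Omega|\underline u\ge0$. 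The resulting subsolution inequality, after the logistic comparison and the limits $t\to\infty$ and $\varepsilon\to0$ (exactly as for \eqref{Asymp-coexist-eq-8} in Lemma \ref{lem-asym-coexist-02}), yields \eqref{Asymp-exclusion-eq-07}.

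The work here is entirely bookkeeping rather than conceptual: the comparison--ODE machinery is already in place from the coexistence lemmas, and the role of {\bf (H1)} is only to guarantee (via Theorem \ref{thm-global-001}(1)) that the solution is global and bounded, so that $\overline u,\underline u,\overline v,\underline v$ are finite and \eqref{Asym-eq-01}--\eqref{Asym-eq-02} are available. The one point that deserves care is the sign tracking of the $u$-coefficient: because $b_1\le\frac{\chi_2 k}{d_3}$ we must bound $u$ by $\overline u$ (not $\underline u$) in the upper estimate, which is exactly what creates the extra $(\frac{\chi_2 k}{d_3}-b_1)\overline u$ contribution absent in Lemma \ref{lem-asym-exclusion-02}; and one must check that every discarded term is of the correct sign, which follows from $u,v\ge0$ and hence $\underline u,\underline v\ge0$.
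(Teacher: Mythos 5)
Your proposal is correct and follows essentially the same route as the paper: rewrite the $v$-equation using $d_3\Delta w=\lambda w-ku-lv$, bound the resulting bracket above and below via \eqref{Asym-eq-01}--\eqref{Asym-eq-02} with exactly the sign-tracking you describe (in particular $\frac{\chi_2k}{d_3}-b_1\ge 0$ forcing the $\overline{u}$ term in the upper estimate), and conclude by the logistic ODE comparison of Lemma \ref{lem-asym-coexist-02} followed by $t\to\infty$ and $\varepsilon\to 0$. The coefficients you obtain match the paper's displayed differential inequalities term for term.
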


\begin{proof}
We have
\begin{align*}
&v_t-d_2\Delta v+\chi_2 \nabla v \cdot \nabla w\nonumber\\
& \leq v\left\{ b_0-(b_2-\frac{\chi_2 }{d_3}k)v+\left(k\frac{\chi_2 }{d_3}-b_1+|\Omega|(b_3)_-\right)\overline{u}-\left(|\Omega|(b_4)_++l\frac{\chi_2}{d_3}\right)\underline{v}\right\} \nonumber\\
&+v\left\{ +|\Omega|(b_4)_-\overline{v}+\left(k\frac{\chi_2 }{d_3}-b_1+|\Omega|((b_3)_-+|b_4|)+l\frac{\chi_2}{d_3}+k\frac{\chi_{1}}{d_{3}}\right)\epsilon\right\},
\end{align*}
and since $b_2> \frac{\chi_2l}{d_3},$  \eqref{Asymp-exclusion-eq-06} follows from  similar arguments as of  \eqref{Asymp-coexist-eq-7}  in Lemma \ref{lem-asym-coexist-02}.

Similarly, we have
\begin{align*}
&v_t-d_2\Delta v+\chi_2 \nabla v \cdot \nabla w\nonumber\\
& \geq v\left\{ b_0-(b_2-\frac{\chi_2 }{d_3}k)v-(\frac{\chi_{2}k}{d_{3}}+|\Omega|(b_3)_+)\overline{u}-\left(|\Omega|(b_4)_++l\frac{\chi_2}{d_3}\right)\overline{v}+|\Omega|(b_4)_-\underline{v}\right\} \nonumber\\
&-v\left\{ \left(|\Omega|((b_3)_++|b_4|)+l\frac{\chi_2}{d_3}+2k\frac{\chi_{1}}{d_{3}}\right)\epsilon\right\},
\end{align*}
and since $b_2> \frac{\chi_2l}{d_3},$  \eqref{Asymp-exclusion-eq-07} follows from  similar arguments as of  \eqref{Asymp-coexist-eq-8}  in Lemma \ref{lem-asym-coexist-02}.
\end{proof}

Now using the previous four lemmas, we prove Theorem \ref{Asym-Th-2}.
\begin{proof}[Proof of Theorem \ref{Asym-Th-2}]
The proof is divided in two steps. In the first step, we prove $\overline{u}=0$ and then in the second step, using the result of first step , we get $\underline{v}=\overline{v}=\frac{b_0}{b_2+|\Omega|b_4}.$

\noindent{\bf Step 1.}  $\overline{u}=0$\\
The proof of $\overline{u}=0$ is also divided into two cases, according to the sign of the quantity $b_1-\frac{\chi_2k}{d_3}$. If $b_1> \frac{\chi_2k}{d_3}$ we shall based our arguments on Lemmas \ref{lem-asym-exclusion-01} and \ref{lem-asym-exclusion-02}. While if $b_1\leq  \frac{\chi_2k}{d_3}$, the arguments of proof are based on Lemmas \ref{lem-asym-exclusion-01} and \ref{lem-asym-exclusion-03}. Since the arguments in both cases are similar, we only provide the proof in case $b_1> \frac{\chi_2k}{d_3}$. Hence, without loss of generality, we might suppose that $b_1> \frac{\chi_2k}{d_3}$.

Suppose by contradiction that $\overline{u}>0.$
First, from equations \eqref{Asymp-exclusion-eq-000}, \eqref{Asymp-exclusion-eq-01} and $\overline{u}>0,$ we get
\begin{equation}\label{Asymp-exclusion-eq-08}
\underline{v}<\frac{a_0}{a_2+|\Omega|a_{4}}
\end{equation}


In this case, from \eqref{Asymp-exclusion-eq-08} and \eqref{Asymp-exclusion-eq-02} we get
\begin{align*}
b_0-\left(|\Omega|(b_4)_++\frac{\chi_{2}l}{d_{3}}\right)\underline{v}+|\Omega|(b_4)_-\overline{v}&\geq b_0-\left(|\Omega|b_4+\frac{\chi_{2}l}{d_{3}}\right)\underline{v}\nonumber\\
&> b_0-\frac{|\Omega|b_4+\frac{\chi_{2}l}{d_{3}}}{ a_2+|\Omega|a_{4}}a_0\nonumber\\
&\geq  b_0-\frac{|\Omega|b_4+b_2}{ a_2+|\Omega|a_{4}}a_0\nonumber\\
&\geq 0.
\end{align*}
Therefore
\begin{equation}\label{Asymp-exclusion-eq-09}
b_0+|\Omega|(b_3)_-\overline{u}-\left(|\Omega|(b_4)_++\frac{\chi_{2}l}{d_{3}}\right)\underline{v}+|\Omega|(b_4)_-\overline{v}>0.
\end{equation}
From \eqref{Asymp-exclusion-eq-05}, we get
\begin{equation*}
    ((b_{4})_+|\Omega|+\frac{\chi_2}{d_3}l)\overline{v}\geq b_0- (b_{1}+(b_{3})_+|\Omega|)\overline{u}-(b_{2}-\frac{\chi_{2}}{d_{3}}l-|\Omega|(b_4)_-)\underline{v}.
\end{equation*}
Thus, from equations  \eqref{Asymp-exclusion-eq-000}, \eqref{Asymp-exclusion-eq-01} and $\overline{u}>0,$ we get
\begin{equation*}
    ((b_{4})_+|\Omega|+\frac{\chi_2}{d_3}l)\overline{v}\geq b_0- (b_{1}+(b_{3})_+|\Omega|)\frac{\left\{  a_0-(a_2+a_{4}|\Omega|)\underline{v}\right\}}{a_{1}-\frac{\chi_{1}k}{d_{3}}-|\Omega|(a_3)_-}-(b_{2}-\frac{\chi_{2}}{d_{3}}l-|\Omega|(b_4)_-)\underline{v}.
\end{equation*}
Therefore
\begin{align*}
    &((b_{4})_+|\Omega|+\frac{\chi_2}{d_3}l)\overline{v}\nonumber\\
&\geq  b_0- \frac{b_{1}+(b_{3})_+|\Omega|}{a_{1}-\frac{\chi_{1}k}{d_{3}}-|\Omega|(a_3)_-}a_0\nonumber\\
&-\frac{(b_{2}-\frac{\chi_{2}}{d_{3}}l-|\Omega|(b_4)_-)(a_{1}-\frac{\chi_{1}k}{d_{3}}-|\Omega|(a_3)_-)-(b_{1}+(b_{3})_+|\Omega|)(a_2+a_{4}|\Omega|)}{a_{1}-\frac{\chi_{1}k}{d_{3}}-|\Omega|(a_3)_-}\underline{v}.
\end{align*}
It follows from the last inequality and inequality \eqref{Asymp-exclusion-eq-04} that
\begin{align*}
&((b_{4})_+|\Omega|+\frac{\chi_2}{d_3}l)\frac{  b_0+|\Omega|(b_3)_-\overline{u}-\left(|\Omega|(b_4)_++\frac{\chi_{2}l}{d_{3}}\right)\underline{v}}{b_{2}-\frac{\chi_{2}}{d_{3}}l-|\Omega|(b_4)_-}\nonumber\\
&\geq b_0- \frac{(b_{1}+(b_{3})_+|\Omega|)}{a_{1}-\frac{\chi_{1}k}{d_{3}}-|\Omega|(a_3)_-}a_0\nonumber\\
&-\frac{(b_{2}-\frac{\chi_{2}}{d_{3}}l-{|\Omega|(b_4)_-})(a_{1}-\frac{\chi_{1}k}{d_{3}}-|\Omega|(a_3)_-)-(b_{1}+(b_{3})_+|\Omega|)(a_2+a_{4}|\Omega|)}{a_{1}-\frac{\chi_{1}k}{d_{3}}-|\Omega|(a_3)_-}\underline{v}.
\end{align*}
Thus
\begin{align*}
& \frac{(b_{2}-\frac{\chi_{2}}{d_{3}}l-|\Omega|(b_4)_-)(a_{1}-\frac{\chi_{1}k}{d_{3}}-|\Omega|(a_3)_-)-(b_{1}+(b_{3})_+|\Omega|)(a_2+a_{4}|\Omega|)}{a_{1}-\frac{\chi_{1}k}{d_{3}}-|\Omega|(a_3)_-}\underline{v}\nonumber\\
&-\frac{  ((b_{4})_+|\Omega|+\frac{\chi_2}{d_3}l)^2 }{b_{2}-\frac{\chi_{2}}{d_{3}}l-|\Omega|(b_4)_-}\underline{v}\nonumber\\
&\geq\left\{1-\frac{((b_{4})_+|\Omega|+\frac{\chi_2}{d_3}l)}{b_{2}-\frac{\chi_{2}}{d_{3}}l-|\Omega|(b_4)_-}\right\} b_0- \frac{(b_{1}+(b_{3})_+|\Omega|)a_0}{a_{1}-\frac{\chi_{1}k}{d_{3}}-|\Omega|(a_3)_-}-\frac{\left((b_{4})_+|\Omega|+\frac{\chi_2}{d_3}l\right)|\Omega|(b_{3})_-}{b_{2}-\frac{\chi_{2}}{d_{3}}l-|\Omega|(b_4)_-}\overline{u}.
\end{align*}
Using equations  \eqref{Asymp-exclusion-eq-000}, \eqref{Asymp-exclusion-eq-01} and $\overline{u}>0,$ it follows from the last inequality that
\begin{align*}
& \frac{(b_{2}-\frac{\chi_{2}}{d_{3}}l-|\Omega|(b_4)_-)(a_{1}-\frac{\chi_{1}k}{d_{3}}-|\Omega|(a_3)_-)-(b_{1}+(b_{3})_+|\Omega|)(a_2+a_{4}|\Omega|)}{a_{1}-\frac{\chi_{1}k}{d_{3}}-|\Omega|(a_3)_-}\underline{v}\nonumber\\
&-\frac{  ((b_{4})_+|\Omega|+\frac{\chi_2}{d_3}l)^2 }{b_{2}-\frac{\chi_{2}}{d_{3}}l-|\Omega|(b_4)_-}\underline{v}\nonumber\\
&\geq\left\{1-\frac{((b_{4})_+|\Omega|+\frac{\chi_2}{d_3}l)}{b_{2}-\frac{\chi_{2}}{d_{3}}l-|\Omega|(b_4)_-}\right\} b_0- \frac{(b_{1}+(b_{3})_+|\Omega|)}{a_{1}-\frac{\chi_{1}k}{d_{3}}-|\Omega|(a_3)_-}a_0\nonumber\\
&-{\frac{\left(|\Omega|(b_{4})_++\frac{\chi_2}{d_3}l\right)\big(  a_0-(a_2+a_{4}|\Omega|)\underline{v}\big)|\Omega|(b_{3})_-}{\left(b_{2}-\frac{\chi_{2}}{d_{3}}l-|\Omega|(b_4)_-\right)\left(a_{1}-\frac{\chi_{1}k}{d_{3}}-|\Omega|(a_3)_-\right)}.}
\end{align*}
Thus, we get
\begin{align}\label{ra-00001}
&\underbrace{\left\{ (a_{1}-\frac{\chi_{1}k}{d_{3}}-|\Omega|(a_3)_-)(b_{2}-2\frac{\chi_{2}}{d_{3}}l-|b_{4}||\Omega|)(b_2+b_{4}|\Omega|)\right\}}_{B_1}\underline{v}\nonumber\\
&-\underbrace{\left\{ (b_{1}+(b_{3})_+|\Omega|)(a_2+a_{4}|\Omega|)(b_{2}-\frac{\chi_{2}}{d_{3}}l-|\Omega|(b_4)_-)\right\}}_{B_2}\underline{v}\nonumber\\
&\geq\underbrace{(a_{1}-\frac{\chi_{1}k}{d_{3}}-|\Omega|(a_3)_-)(b_{2}-2\frac{\chi_{2}}{d_{3}}l-|b_{4}||\Omega|) b_0- (b_{2}-\frac{\chi_{2}}{d_{3}}l-|\Omega|(b_4)_-)(b_{1}+(b_{3})_+|\Omega|)a_0}_{A_1}\nonumber\\
&-\underbrace{((b_{4})_+|\Omega|+\frac{\chi_2}{d_3}l)a_0|\Omega|(b_{3})_-}_{A_2}+\underbrace{\left\{(|\Omega|(b_{4})_+ +\frac{\chi_2}{d_3}l)(a_2+a_{4}|\Omega|)|\Omega|(b_{3})_-\right\}}_{B_3}\underline{v}.
\end{align}

Then, inequality \eqref{ra-00001} is equivalent to
\begin{equation}\label{ra-aa1}
B\underline{v}\geq A
\end{equation}
where $B=B_1- B_2 - B_3$ and $A=A_1 - A_2.$
Note that the first equation of \eqref{Asymp-exclusion-eq-03} yields that   $A>0.$ This combined with \eqref{ra-aa1} imply that   $B>0$.  Therefore, inequality \eqref{ra-aa1} becomes
$$
\underline{v}\geq \frac{A}{B}.
$$
Then thanks  to equation \eqref{Asymp-exclusion-eq-08}, we get
$$
B>\frac{a_2+|\Omega|a_4}{a_0}A.
$$
That means
\begin{align*}
 &(a_{1}-\frac{\chi_{1}k}{d_{3}}-|\Omega|(a_3)_-)(b_{2}-2\frac{\chi_{2}}{d_{3}}l-|b_{4}||\Omega|)(b_2+b_{4}|\Omega|)\nonumber\\
&>(a_{1}-\frac{\chi_{1}k}{d_{3}}-|\Omega|(a_3)_-)(b_{2}-2\frac{\chi_{2}}{d_{3}}l-|b_{4}||\Omega|)\frac{a_2+|\Omega|a_4}{a_0}b_0,
\end{align*}
Thus
$$
(a_{1}-\frac{\chi_{1}k}{d_{3}}-|\Omega|(a_3)_-)(b_{2}-2\frac{\chi_{2}}{d_{3}}l-|b_{4}||\Omega|)\left\{\frac{a_2+|\Omega|a_4}{a_0}b_0-(b_2+b_{4}|\Omega|)\right\}<0.
$$
which contradict equations \eqref{Asymp-exclusion-eq-01}, \eqref{Asymp-coexist-eq-02} and  \eqref{Asymp-exclusion-eq-02} .

\medskip

\medskip
\noindent{\bf Step 2.} Since by {\bf Step 1.} $\overline{u}=0,$ we get that   \eqref{Asymp-exclusion-eq-04} and \eqref{Asymp-exclusion-eq-06} are equivalent and become
\begin{equation}\label{Asymp-exclusion-eq-010}
\overline{v}\leq \frac{ b_0-\left(|\Omega|(b_4)_++\frac{\chi_{2}l}{d_{3}}\right)\underline{v}+|\Omega|(b_4)_-\overline{v}}{b_{2}-\frac{\chi_{2}l}{d_{3}}}.
\end{equation}
Similarly, we get that \eqref{Asymp-exclusion-eq-05} and \eqref{Asymp-exclusion-eq-07} are equivalent and become
\begin{equation}\label{Asymp-exclusion-eq-011}
\underline{v}\geq \frac{ b_0-\left(|\Omega|(b_4)_++\frac{\chi_{2}l}{d_{3}}\right)\overline{v}+|\Omega|(b_4)_-\underline{v}}{b_{2}-\frac{\chi_{2}l}{d_{3}}}.
\end{equation}
By taking the difference \eqref{Asymp-exclusion-eq-010}-\eqref{Asymp-exclusion-eq-011}, we get
$$
\left(b_{2}-2\frac{\chi_{2}}{d_{3}}l-|b_{4}||\Omega|\right)\left(\overline{v}-\underline{v}\right)\leq 0.
$$
Thus  by \eqref{Asymp-coexist-eq-02} we get $\overline{v}=\underline{v}$ and it then follows from \eqref{Asymp-exclusion-eq-010} and \eqref{Asymp-exclusion-eq-011} that
$$
\overline{v}=\underline{v}=\frac{b_0}{b_2+|\Omega|b_4}.
$$
\end{proof}

\medskip

\noindent {\bf Perspectives.} This study showed that even in the case of parabolic-parabolic-elliptic chemotaxis system with Lotka-Volterra type sources and nonlocals competitive terms, the eventual comparison method gives explicit natural parameter regions for both coexistence and exclusion phenomenons. A natural and non trivial question is wether the  method of eventual comparison can be entended to the study  of the full parabolic chemotaxis system of two species and one chemoattractants that is, 
\begin{equation}\label{perspective-eq1}
\begin{cases}
u_t=d_1\Delta u-\chi_1\nabla (u\cdot \nabla w)+u\left(a_0-a_1u-a_2v\right),\quad x\in \Omega \quad\cr
v_t=d_2\Delta v-\chi_2\nabla (v\cdot \nabla w)+v\left(b_0-b_1u-b_2v\right),\quad x\in \Omega \quad\cr
\tau w_t=d_3\Delta w+k u+lv-\lambda w,\quad x\in \Omega, \,\, \tau > 0 \quad\cr
\end{cases}
 \end{equation}
with homogeneous Neuman boundary conditions on bounded (convex) domains. The main challenge here is that $\Delta w$ or equivalently $w_t$  may not be small when $u$ and $v$ are small.  But for the method to work, we  only need  to find (explicit) bound for $\|\Delta w\|_\infty$ for time large enougth. Note that this question remains open even in the case of one species full parabolic of chemotaxis system.  An execellent reference in this direction is the recent paper of Winkler  \cite{W2014a}, where the author got a natural non explcit condition for the asymtotic stability of the constant steady state in one species  full parabolic chemotaxis system on bounded convex domains.

Another interesting challenge is to develop new techniques which would provide explicit and natural hypothesises on the parameters regions  for the study of the asymptotic behavior of system \eqref{perspective-eq1} in heterogeneous medium,
  
\begin{equation}\label{perspective-eq1-heterogeneous}
\begin{cases}
u_t=d_1\Delta u-\chi_1\nabla (u\cdot \nabla w)+u\left(a_0(x,t)-a_1(t,x)u-a_2(x,t)v\right),\quad x\in \Omega \quad\cr
v_t=d_2\Delta v-\chi_2\nabla (v\cdot \nabla w)+v\left(b_0(x,t)-b_1(x,t)u-b_2(x,t)v\right),\quad x\in \Omega \quad\cr
\tau w_t=d_3\Delta w+k u+lv-\lambda w,\quad x\in \Omega, \,\, \tau \geq 0 
\end{cases}
\end{equation}
with homogeneous Neuman boundary conditions on bounded domains. One particular challenge in this case is the existence and nonlinear stability  of positive entire solutions.  We refer to the paper of Issa and Shen, \cite{ITBWS16}, for some existing works in this direction. Finally, it is also very interesting to study the existence of travelling waves for sytem \eqref{perspective-eq1-heterogeneous}.  See the paper of Salako and Shen \cite{RBSWS17b} for the case of constant coefficients.

\medskip

\noindent {\bf Acknowledgment.} We express our sincere gratitude and appreiciation to Professors  Wenxian Shen, J. Ignacio Tello, Michael Winkler and Johannes Lankeit for their valuable discussions, suggestions,  and references.  We also thank the refree for his/her valuable comments and suggestions which greatly improved the  paper, its presentation and style.

\end{document}